\documentclass[12pt,letterpaper]{article}

\usepackage{amsmath, amssymb, amsthm}
\usepackage{mathrsfs}
\usepackage{mathtools}
\usepackage{subcaption}
\usepackage{cancel}
\usepackage{xcolor}
\usepackage{bbm} 
\usepackage{hyperref}
\usepackage{bm}
\hypersetup{colorlinks = true, linkcolor=red, citecolor=blue}

\usepackage{kyle_page_setup}
\usepackage{kyle_math_general_definitions}

\newcommand{\cB}{\mathcal{B}}
\newcommand{\cH}{\mathcal{H}}

\newcommand{\ii}{\mathrm{i}}

\newcommand{\N}{\mathbb{N}}
\newcommand{\R}{\mathbb{R}}
\newcommand{\UU}{\mathbb{U}}
\newcommand{\Z}{\mathbb{Z}}



\newcommand{\create}[1]{{a}_{#1}^\dagger}
\newcommand{\annihi}[1]{{a}_{#1}}




\usepackage[numbers]{natbib}




\definecolor{kylegreen}{rgb}{0,.7,0}
\definecolor{misson}{rgb}{1,.2,.2}

\usepackage{graphicx}
\usepackage{epstopdf}
\usepackage{epsfig}

\begin{document}

\title{Dissociation limit of the H$_2$ molecule in phRPA-DFT}
\author{Mi-Song Dupuy and Kyle Thicke}
\maketitle

\abstract{Despite the simplicity of the H$_2$ system, the correct dissociation of H$_2$ is known to be a difficult problem for density functional approximations. In this work, we consider the particle-hole random phase approximation (phRPA), an approximation to the correlation energy in electronic structure, and show that the phRPA energy of the H$_2$ molecule correctly dissociates.  
That is, as the hydrogen atoms are pulled apart, the phRPA energy of the system converges to twice the phRPA energy of a single hydrogen atom.  
As part of our result, we prove that the phRPA correlation energy 
is well-defined.}

\section{Introduction}

The electronic properties of a molecule is encoded in the lowest eigenfunction of the $N$-body Schr\"odinger operator. 
Except for the smallest molecules, due to the high-dimensional nature of the equation, solving directly the eigenvalue problem is a colossal numerical challenge.
Admiringly, the lowest eigenvalue of the $N$-body Schr\"odinger operator can be obtained by a minimization of a functional depending only on the electronic density, i.e., a function of a single space variable. This is the \emph{density functional theory} discovered by Hohenberg and Kohn \cite{hohenberg1964inhomogeneous}.
This tremendous reduction of dimensionality comes with a catch: this functional is unknown and only approximations in specific asymptotic regimes are available.
Nevertheless, satisfactory approximations have been designed, partially relying on these asymptotic behaviors, which can tackle molecules with thousands of electrons. 

In this whole variety of density functional approximations, only a few are able to describe the dissociation of the simplest molecule H$_2$. 
Physically, by stretching the H$_2$ molecule, we expect the energy of the whole system in the limit to be the sum of the energy of the single H atoms. 
So far this has only been achieved by the Strictly Correlated Electron (SCE) density functional \cite{chen2014numerical} and random phase approximation (RPA) \cite{cohen2012challenges}.
The latter model and in particular the particle-hole RPA (phRPA) is the main focus of the present paper, where we show that the phRPA correlation energy is well-defined and gives the correct dissociation limit.

The particle-hole random phase approximation (phRPA) is typically known simply as RPA in the physics and chemistry literature.  It originated in an investigation of the response of the homogeneous electron gas to a wave \cite{bohm1951collective}.  The term ``random phase approximation'' referred to the fact that if there were a lot of electrons that were in random locations, there would be an overall cancellation of the out-of-phase responses.  However, this interpretation is rarely utilized today, as the diagrammatic interpretation (discussed below) has become dominant.  The ``particle-hole'' part of phRPA refers to the fact that the method gives the response function to a perturbation in the density, which can be written as a ``particle'' and a ``hole'' term in second quantization (i.e., $\rho_{ij} = \braopket{\Psi}{\create{i}\annihi{j}}{\Psi}$, where $\create{i}$ and $\annihi{j}$ are the usual creation and annihilation operators).  This distinguishes it from, e.g., the particle-particle RPA \cite{van2013exchange}, which gives the reponse function to a time-dependent perturbation in the so-called pairing field.
In second quantization, the pairing field couples $N$-particle states and $N+2$-particle states giving information on linear response properties involving two particles.

In the physics literature, phRPA is typically derived using Feynman diagrams.  Essentially, Feynman diagrams allow one to keep track of all the terms in pertubation expansion of the Green's function in terms of the Coulomb interaction (each term in the pertubation expansion corresponds to a diagram).  To derive phRPA, physicists consider the uniform electron gas at high density.  A non-rigorous analysis of the diagrams (i.e., the terms in the pertubation expansion), shows that in the high density limit, only the so-call ring diagrams are important.  Therefore, phRPA is taken to be the sum of these ring diagrams (i.e., phRPA is the sum of the terms in the perturbation expansion that correspond to the Feynman diagrams consisting only of rings) \cite{FetterWalecka}.  In this paper, we explore the facinating fact that even though phRPA is derived for the high density limit, it correctly dissociates H$_2$.  For a more in-depth review of phRPA, see \cite{ren2012random}.  



%

\subsection{Short review on dissociation in DFT}

For finite systems and nonrelativistic electrons, the system is characterized by a Hamiltonian $H_N(v_\mathrm{ext},w)$
\begin{equation}
  \label{eq:N-body-Hamiltonian}
  H_N(v_\mathrm{ext},w) =  \sum\limits_{i=1}^N \Big(-\frac{1}{2}\Delta_{r_i} + v_{\mathrm{ext}}(r_i) \Big) + \sum\limits_{1 \leq i < j \leq N} w(r_i-r_j).
\end{equation}
The potentials $v_\mathrm{ext}$ and $w$ are such that $v_\mathrm{ext} \leq 0 \in L^2(\RR^3) + L_\varepsilon^\infty(\RR^3)$ and $w(r) = \tfrac{1}{|r|}$, where the space $L^2(\RR^3) + L_\varepsilon^\infty(\RR^3)$ is the space of functions $v$ such that for all $\varepsilon > 0$, there exist $v_2 \in L^2(\RR^3)$ and $v_\infty \in L^\infty(\RR^3)$ with $v = v_2 + v_\infty$, $\mathrm{supp} \, v_2 \subset B_R$ and $\|v_\infty\|_{L^\infty} \leq \varepsilon$. 

The operator $H_N(v_\mathrm{ext},w)$ as an operator acting on $\bigwedge_{i=1}^N L^2(\RR^3 \times \ZZ_2)$ with domain $\bigotimes_{i=1}^N H^2(\RR^3 \times \ZZ_2)$ is self-adjoint \cite{reed_simon_iv}.
For the external potentials considered $v_\mathrm{ext}$, we are going to assume that the ground-state energy $E_0^N$ is nondegenerate.

\begin{assumption}[Uniqueness of the ground-state]
  \label{assumption:unique_GS}
  The ground-state energy $E_0^N$ is a simple eigenvalue of $H_N(v_\mathrm{ext},w)$.
\end{assumption}

The ground state $\Psi_0 \in \bigwedge_{i=1}^N L^2(\RR^3 \times \ZZ_2)$ associated to the lowest eigenvalue $E_0$ of $H_N(v_\mathrm{ext},w)$ is then unique up to a phase factor. 
An important reduced quantity of the ground state wave function is the \emph{electronic density} $\rho$ given by 
\begin{equation}
  \label{eq:electronic_density_definition}
  \rho(r) = \sum_{s_1,\dots,s_N \in \mathbb{Z}_2} \int_{\RR^{3(N-1)}} |\Psi_0(rs_1;\dots;r_Ns_N)|^2 \, \mathrm{d}r_2 \dots \mathrm{d}r_N.
\end{equation}
The ground state $\Psi_0$ and its electronic density has some known properties that will be used subsequently and are listed below. 

\begin{proposition}[Properties of the ground-state of $H_N(v_\mathrm{ext},w)$ \cite{yserentant2010regularity}]
  \label{prop:N_body_Hamiltonian}
  Assume that Assumption~\ref{assumption:unique_GS} holds. Then the following assertions are true:
  \begin{enumerate}
    \item $\Psi_0$ can be chosen real-valued and $\Psi_0 \in \bigotimes_{i=1}^N H^2(\RR^{3} \times \Z_2)$;
    \item $\rho \in L^1(\RR^3) \cap L^\infty(\RR^3)$ and there are constants $a,C>0$ such that for all $r \in \RR^3$, $\rho(r) \leq C e^{a|r|}$.
  \end{enumerate}
\end{proposition}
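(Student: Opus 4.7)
The plan is to establish the three assertions in turn: the reality of $\Psi_0$, its $H^2$-regularity, and the two properties of the density $\rho$.

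For reality, I would use that $H_N(v_\mathrm{ext},w)$ commutes with complex conjugation, since $v_\mathrm{ext}$ and $w$ are real. Consequently $\overline{\Psi_0}$ is also a ground state, and Assumption~\ref{assumption:unique_GS} forces $\overline{\Psi_0} = e^{\ii\theta}\Psi_0$ for some $\theta \in \RR$; the rotated function $e^{-\ii\theta/2}\Psi_0$ is then real-valued and still a ground state. For the $H^2$-regularity, the strategy is Kato--Rellich. Each one-body potential $v_\mathrm{ext}(r_i)$ is infinitesimally $-\Delta_{r_i}$-bounded, by the $L^2 + L^\infty_\varepsilon$ decomposition (a compactly supported $L^2$ piece plus an arbitrarily small $L^\infty$ piece), and each pair interaction $|r_i-r_j|^{-1}$ is infinitesimally $(-\Delta_{r_i}-\Delta_{r_j})$-bounded via Hardy's inequality. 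Summing over indices, the total multiplicative perturbation is infinitesimally bounded with respect to $-\tfrac{1}{2}\sum_i \Delta_{r_i}$, so Kato--Rellich gives that $H_N$ is self-adjoint on $\bigotimes_{i=1}^N H^2(\RR^3 \times \ZZ_2)$ (restricted to the fermionic sector), and any eigenfunction lies in this domain.

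For the density, $\rho \in L^1$ follows directly from the definition~\eqref{eq:electronic_density_definition}, Fubini, and the $L^2$-normalization of $\Psi_0$. To control $\rho$ pointwise and obtain its exponential decay, I would combine two classical tools. First, Agmon's exponential decay estimate: under our hypotheses (i.e.\ for neutral or positively charged molecules), HVZ together with Zhislin's theorem give $E_0^N < \inf \sigma_\mathrm{ess}(H_N)$, so there exists $\alpha>0$ with $e^{\alpha(|r_1|+\cdots+|r_N|)}\Psi_0 \in L^2$. Second, local elliptic regularity: since Coulomb singularities are locally $L^p$ for every $p<3$, the effective potential $V=\sum_i v_\mathrm{ext}(r_i)+\sum_{i<j}w(r_i-r_j)$ lies in $L^p_\mathrm{loc}$ for some $p>3N/2$, and Moser iteration applied to $-\tfrac{1}{2}\Delta\Psi_0 = (E_0^N-V)\Psi_0$ promotes the weighted $L^2$ bound to a pointwise estimate $|\Psi_0(r_1,\ldots,r_N)|\leq C e^{-\alpha'(|r_1|+\cdots+|r_N|)}$ for some $\alpha'>0$. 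Substituting into \eqref{eq:electronic_density_definition} and integrating out $r_2,\ldots,r_N$ yields both $\rho \in L^\infty$ and the pointwise exponential bound on $\rho$ asserted in the proposition.

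The technical heart of the argument is the passage from Agmon's weighted $L^2$ estimate to a pointwise bound on $\Psi_0$. The weighted $L^2$ estimate itself is by now standard, but its combination with Moser iteration at the Coulombic singularities requires some care near collision configurations, in particular to track how the Agmon weight interacts with the local $L^p$ bounds on $V$ and to avoid losing the decay rate $\alpha$ to an uncontrolled degree. Once the pointwise bound on $\Psi_0$ is in hand, every property of $\rho$ claimed by the proposition follows from Fubini and direct integration.
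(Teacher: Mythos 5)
The paper offers no proof of this proposition at all: it is imported wholesale from \cite{yserentant2010regularity}, so there is no internal argument to compare yours against, and your sketch is best judged as a reconstruction of the standard route — which it is, and essentially soundly so: conjugation symmetry plus Assumption~\ref{assumption:unique_GS} for reality (minor slip: the correcting phase is $e^{\ii\theta/2}$, not $e^{-\ii\theta/2}$), Kato--Rellich for membership of $\Psi_0$ in the operator domain, and Agmon decay combined with subsolution/Kato-class estimates followed by Fubini for the density. Two caveats are worth recording. First, exponential decay via Agmon requires $E_0^N$ to lie strictly below the essential spectrum; Assumption~\ref{assumption:unique_GS} only states that it is a simple eigenvalue, and you supply the missing strict inequality through HVZ and Zhislin, i.e.\ through an unstated neutrality/positive-charge hypothesis. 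This is harmless for the hydrogen systems the paper treats, and note that the proposition as literally printed only claims $\rho(r)\le Ce^{a|r|}$ with $a>0$, a growth bound already implied by $\rho\in L^\infty$; the intended statement is surely decay, which is what your argument delivers. Second, if $\bigotimes_{i=1}^N H^2(\mathbb{R}^3\times\mathbb{Z}_2)$ were read as a genuine (mixed-regularity) tensor product, Kato--Rellich would not yield it — that stronger regularity is precisely the content of Yserentant's theory and does not follow from the eigenfunction lying in the self-adjointness domain; since the paper itself uses the same symbol to denote that domain (citing Reed--Simon), your reading, namely $H^2$ of the full configuration space, matches the intended meaning, but the distinction is exactly why the citation to \cite{yserentant2010regularity} carries weight beyond what Kato--Rellich gives.
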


Due to the computational intractibility of directly solving the Schrödinger equation 
\[
  H_N(v_\mathrm{ext},w) \Psi_0 = E_0 \Psi_0,  
\]
with standard numerical techniques, density functional theory (DFT) has become a very popular method for finding the ground state $\Psi_0$ of a quantum system.  The great advantage of DFT is the Hohenberg-Kohn (HK) theorem \cite{hohenberg1964inhomogeneous,levy1979universal,lieb1983}, which states that external potential of a system of electrons is uniquely determined by the ground state density of the system.  Since the normal formulation of quantum mechanics shows that the external potential determines the ground state wave function $\Psi_0$, which determines all observable ground state properties of the system, the HK theorem implies that we only need the ground state density $\rho$
instead of the $N$-particle ground state wavefunction.  This is a radical reduction in the size of the problem -- from $N$ spatial dimensions to three.

In particular, we are interested in computing the ground state energy of the system.  
In this case, the HK theorem states that there is a functional $F_\mathrm{HK}$ such that the ground state energy $E_0$ can be obtained by a minimization involving the functional $F_\mathrm{HK}$:
\begin{equation}
  E_0 = \inf_{\rho \in \mathcal{I}_N} \big( F_\mathrm{HK}(\rho) + \langle v_\mathrm{ext}, \rho \rangle \big),
\end{equation}
where $\mathcal{I}_N$ is the set of the $N$-representable densities given by 
\[
  \mathcal{I}_N = \Big\lbrace \rho \in L^1(\RR^3) \ \big| \ \rho \geq 0, \int_{\mathbb{R}^3} \rho = N, \sqrt{\rho} \in H^1(\mathbb{R}^3) \Big\rbrace.
\]
$F_\mathrm{HK}$ is defined by the constrained search of possible $L^2$-normalized wave functions $\Psi$ which has a density $\rho$
\begin{equation}
  F_\mathrm{HK}[\rho] = \min_{\Psi \mapsto \rho} \langle \Psi, (-\tfrac{1}{2} \Delta + \sum_{1\leq i,j \leq N}w(r_i-r_j)) \Psi \rangle.
\end{equation}
No explicit closed formula for $F_\mathrm{HK}[\rho]$ exists, so it needs to be approximated.  A common method of approximation is the Kohn-Sham method \cite{kohn-sham}.

In Kohn-Sham DFT, one considers a non-interacting set of electrons which has the same ground state density of the interacting system. The following splitting of the HK functional is then chosen
\begin{equation}
  \label{eq:F_HK_definition}
  F_\mathrm{HK}[\rho] = T_s[\rho] + E_{\mathrm{Hxc}}[\rho],
\end{equation}
where $T_s$ is the Kohn-Sham kinetic energy
\begin{equation}
  \label{eq:Ts_definition}
  T_s[\rho] = \inf_{\Phi \in \mathcal{S}_{N,\rho}} \langle \Phi, -\tfrac{1}{2} \Delta \Phi \rangle,
\end{equation}
and $S_{N,\rho} = \lbrace \Phi = \phi_1 \wedge \dots \wedge \phi_N \ | \ \phi_i \in L^2(\mathbb{R}^3 \times \mathbb{Z}_2), \langle \phi_i, \phi_j \rangle = \delta_{ij}, \sum_{s \in \mathbb{Z}_2} \sum_{i=1}^N |\phi_i|^2(r,s) = \rho(r) \rbrace$.
The functional $E_\mathrm{Hxc}$ accounts for the difference of the HK functional and the Kohn-Sham kinetic energy.

It is common to split $E_\mathrm{Hxc}$ into a Hartree, an exchange and a correlation energy
\begin{equation}
  \label{eq:E_Hxc_definition}
  E_\mathrm{Hxc}[\rho] = E_\mathrm{H}[\rho] + E_x[\rho] + E_c[\rho],
\end{equation}
where 
\begin{align}
  E_H[\rho] + E_x[\rho] &= \Big\langle \Phi,  \sum_{1\leq i<j\leq N} w(r_i-r_j) \Phi \Big\rangle, \\
  E_\mathrm{H}[\rho] &= \frac{1}{2} \int \rho(r)\rho(r') w(r,r') \dd r \dd r', \\
  E_x[\rho] &= -\frac{1}{2} \int \Big| \sum_{s \in \mathbb{Z}_2} \sum_{i=1}^N \phi_i(r,s)\phi_i(r',s) \Big|^2 w(r,r') \dd r \dd r',
\end{align}
where $\Phi_\mathrm{KS} = \phi_1 \wedge \dots \wedge \phi_N \in S_{N,\rho}$ is the minimizer of $T_s[\rho]$.
This particular choice of $E_x$ is usually called the \emph{exact exchange}.  Other options are available, for instance the \emph{local density approximation} (LDA) \cite{dirac1930note} where the exact exchange is approximated for the free electron gas in the thermodynamic limit. In this case, $E_x$ can be written explicitely in terms of the density $\rho$.

The correlation energy $E_c[\rho]$ contains all the error between the Kohn-Sham wave function $\Phi_\mathrm{KS}$ and the HK wave function $\Psi_\mathrm{HK}$
\begin{align}
  E_c[\rho] &= F_\mathrm{HK}[\rho] - \langle \Phi_\mathrm{KS}, (-\tfrac{1}{2} \Delta + \sum_{1\leq i,j \leq N}w(r_i-r_j)) \Phi_\mathrm{KS} \rangle \\
  &= \langle \Psi_\mathrm{HK}, (-\tfrac{1}{2} \Delta + \sum_{1\leq i,j \leq N}w(r_i-r_j)) \Psi_\mathrm{HK} \rangle - \langle \Phi_\mathrm{KS}, (-\tfrac{1}{2} \Delta + \sum_{1\leq i,j \leq N}w(r_i-r_j)) \Phi_\mathrm{KS} \rangle.
\end{align}
The latter, sometimes together with $E_x[\rho]$ has been extensively scrutinized in the search of the best approximation of $F_\mathrm{HK}$ \cite{cohen2012challenges,toulouse2021review}. In practice, $E_c[\rho]$ is computed using the Kohn-Sham orbitals $(\phi_i)_{i=1,\dots,N}$
\begin{equation}
  E_c[\rho] \approx E_c[(\phi_i)].
\end{equation}
%

Although the dissociation problem is a classical question in quantum chemistry, mathematical works on the dissociation limit is scarce.  

The dissociation limit for LDA in DFT has been studied in \cite{behr2020dissociation} in a spinless case. 
In dimension one, for a contact potential, it is shown that symmetric splitting occurs under the assumption that the exchange constant is not too large. 
In dimension three, the correct symmetric splitting is proved only for positively charged systems.

In \cite{holst2021symmetry}, it is established that for the H$_2$ molecule in the spin-LDA model, that minimizers at a fixed distance $R$ can break the spin symmetry, depending again on the strengh of the exchange constant. 
This is related to the dissociation limit problem as a wrong limit is due to a spurious self interaction, which increases the total energy but can be solved by considering spin-polarized models.

Finally, let us mention that the SCE functional \cite{seidl2007strictly,gori2009density} which takes as a reference system the complete correlation of the electrons, correctly dissociates H$_2$ as shown in \cite{chen2014numerical}.

\subsection{Structure of the paper}

In this work, we prove that DFT does correctly dissociate (see Theorem~\ref{thm:phrpa_dissociation}) when we approximate $E_{c}$ with the phRPA correlation energy, \emph{i.e.} we show that the energy of the two-hydrogen system approaches that of two isolated H atoms as the atoms are pulled apart,
\begin{equation}
	\lim_{R\to\infty} \mathcal{E}^\text{phRPA}(\text{H}_2) = 2\,\mathcal{E}^\text{phRPA}(\text{H}),
\end{equation}
where $R$ is the distance between the atoms in the two-atom system and $\mathcal{E}^\text{phRPA}$ is the total phRPA energy defined in Equation~\eqref{eq:def_total_phrpa_energy}, using the phRPA approximation for the correlation energy.

In Section~\ref{sec:def_phrpa_corr_energy}, we derive the expression for the phRPA correlation energy and show that the resulting expression is well defined under the assumption of a spectral gap for the one-body model. 
In Section~\ref{sec:dissociation_Nbody_rhf}, we show that the dissociation of H$_2$ is exact in the $N$-body Schr\"odinger model as opposed to the restricted Hartree-Fock case. 
Finally, in Section~\ref{sec:phrpa_dissociation}, we prove that the phRPA correlation energy with the restricted Hartree-Fock orbitals correctly dissociates H$_2$.

\section{Derivation of the phRPA correlation energy}
\label{sec:def_phrpa_corr_energy}

In this section, we are going to derive the phRPA correlation energy for a general $N$-body molecular system and precisely state the assumptions needed and the approximations made to derive the formula of the correlation energy. 
Since it involves the retarded linear response function $\chi$, we introduce this operator.

\subsection{The retarded linear response operator $\chi$}

\subsubsection{Definition of $\chi$}

The retarded linear response operator $\chi$ is the linear response of an interacting system at equilibrium to a one-body perturbation. 
To be more specific, let us consider $\varepsilon>0$, $f \in L^\infty(\RR)$ such that $f(t) = 0$ for $t<0$ and $\beta \in C^{\infty}_c(\RR^3)$. 
Let $\Psi \in \bigwedge_{i=1}^N L^2(\RR^3 \times \ZZ_2)$ be the solution to the time-dependent Schrödinger equation with a perturbation that is turned on at time $t=0$, 
\begin{equation}
  \label{eq:TD_schrodinger}
  \left\lbrace
  \begin{aligned}
    \ii \frac{\partial \Psi}{\partial t}(x,t) & = H_N(v_\mathrm{ext},w) \Psi(x,t) + \varepsilon f(t) \sum\limits_{i=1}^N \beta(r_i) \Psi(x,t), \quad \forall x \in \big( \RR^3 \times \ZZ_2 \big)^N, t>0 \\
    \Psi(x,0) &= \Psi_0(x),
  \end{aligned}
  \right.
\end{equation} 
where $H_N(v_\mathrm{ext},w)$ is the $N$-body operator defined in Equation~\eqref{eq:N-body-Hamiltonian}.

Let $\alpha \in C^{\infty}_c(\RR^3)$ be some multiplication operator in space. The retarded linear response function $\chi$ is defined by the linear variations with respect to $\varepsilon$ of the expected value $\langle \Psi(t), \sum_{i=1}^N \alpha(r_i) \Psi(t) \rangle$:
\begin{equation}
  \Big\langle \Psi(t), \sum_{i=1}^N \alpha(r_i) \Psi(t) \Big\rangle = \Big\langle \Psi_0, \sum_{i=1}^N \alpha(r_i) \Psi_0 \Big\rangle + \varepsilon (f \star \langle \alpha, \chi \beta \rangle)(t) + \cO(\varepsilon^2),
\end{equation}
where $\star$ denotes the convolution on $\RR$.

\begin{definition}[Retarded linear response]
  Let $B : L^{6}(\RR^3) \to \bigwedge\limits_{i=1}^N L^2(\RR^3 \times \ZZ^2)$ be given by 
  \begin{equation}
    \label{eq:B_operator}
    Bv(x_1,\dots,x_N) = \Big( \sum\limits_{i=1}^N v(r_i) - \braket{\rho}{v}\Big) \Psi_0(x_1,\dots,x_N),
  \end{equation} 
  where $\Psi_0$ is the ground-state of $H_N(v_\mathrm{ext},w)$ defined in Equation~\eqref{eq:N-body-Hamiltonian}.

  For $\tau \in \RR$, the retarded linear response operator $\chi(\tau) : L^{6}(\RR^3) \to L^{6/5}(\RR^3)$ is defined by 
  \begin{equation}
    \label{eq:chi_rigorous}
    \chi(\tau) = 2 \, \mathrm{Re}\Big(-\ii \theta(\tau) B^* e^{-\ii \tau (H_N(v_\mathrm{ext},w)-E_0)} B \Big),
  \end{equation}
  where $B^* : \bigwedge\limits_{i=1}^N L^2(\RR^3\times \Z_2) \to L^{6/5}(\RR^3)$ is the adjoint of $B$ with $L^2$ as the pivoting space. 
\end{definition}

The retarded linear response operator $\chi$ can rigorously be established for $\alpha, \beta \in C^\infty_c(\RR^3)$ as shown in the appendix (see Proposition~\ref{prop:rigorous_chi_stronger_assumptions}).
Since Equation~\eqref{eq:chi_rigorous} is valid for a larger set of $\alpha$ and $\beta$, we will take it as granted.

\begin{proposition}
  The operator $\chi$ defined in Equation~\eqref{eq:chi_rigorous} is in $L^\infty\big(\RR, \mathcal{B}(L^6(\RR^3),L^{6/5}(\RR^3))\big)$ where $\mathcal{B}(L^6(\RR^3),L^{6/5}(\RR^3))\big)$ is the space of bounded operators from $L^6(\RR^3)$ to $L^{6/5}(\RR^3)$.
\end{proposition}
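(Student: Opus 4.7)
\emph{Proof proposal.} The plan is to reduce the claim to a single integrability bound on the operator $B$. Writing
\[
\chi(\tau) = 2\,\mathrm{Re}\Big(-\ii\,\theta(\tau)\, B^*\, e^{-\ii\tau(H_N(v_\mathrm{ext},w)-E_0)}\, B\Big),
\]
the propagator $e^{-\ii\tau(H_N-E_0)}$ is unitary on $\bigwedge_{i=1}^N L^2(\RR^3\times\Z_2)$, so its operator norm is $1$ for every $\tau\in\RR$. The factors $\theta(\tau)\in\{0,1\}$, the scalar $-\ii$, and the real-part operation are all norm non-increasing. Hence it is enough to prove $B\in\mathcal{B}(L^6(\RR^3),\bigwedge_{i=1}^N L^2)$, after which duality with $L^2$ as pivot automatically gives $B^*\in\mathcal{B}(\bigwedge_{i=1}^N L^2,\, L^{6/5}(\RR^3))$ with the same norm through
\[
\|B^*\Phi\|_{L^{6/5}}=\sup_{\|v\|_{L^6}\leq 1}\big|\langle\Phi,Bv\rangle_{L^2}\big| \leq \|\Phi\|_{L^2}\,\|B\|_{L^6\to L^2}.
\]
Composing the three pieces then yields a $\tau$-uniform bound of the form $\|\chi(\tau)\|\leq 2\|B\|^2$.

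The remaining work is thus to bound $\|Bv\|_{L^2}\leq C\|v\|_{L^6}$. I would split
\[
Bv = \Big(\sum_{i=1}^N v(r_i)\Big)\Psi_0 \;-\; \langle\rho,v\rangle\,\Psi_0
\]
and estimate the two pieces separately. For the constant piece, Hölder's inequality gives $|\langle\rho,v\rangle|\leq \|\rho\|_{L^{6/5}}\|v\|_{L^6}$, and since $\rho\in L^1(\RR^3)\cap L^\infty(\RR^3)$ by Proposition~\ref{prop:N_body_Hamiltonian}, interpolation places $\rho$ in $L^{6/5}(\RR^3)$; together with $\|\Psi_0\|_{L^2}=1$ this controls the term. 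For the one-body piece, the triangle inequality together with the permutation symmetry of $|\Psi_0|^2$ gives
\[
\Big\|\sum_{i=1}^N v(r_i)\,\Psi_0\Big\|_{L^2}^{2} \leq N^2\,\|v(r_1)\Psi_0\|_{L^2}^{2} = N^2 \int_{\RR^3}|v(r)|^2\rho(r)\,\mathrm{d}r,
\]
where the final equality is the defining property of $\rho$; one more Hölder estimate bounds the right-hand side by $N^2\|v\|_{L^6}^2\|\rho\|_{L^{3/2}}$, and $\rho\in L^{3/2}(\RR^3)$ again follows by interpolation of the $L^1\cap L^\infty$ bound.

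Combining these two estimates yields $\|Bv\|_{L^2}\leq C(N,\|\rho\|_{L^{3/2}},\|\rho\|_{L^{6/5}})\|v\|_{L^6}$, which is the desired bound. There is no real obstacle here beyond bookkeeping: the proof collapses into a short chain of Hölder inequalities once the integrability $\rho\in L^1\cap L^\infty$ from Proposition~\ref{prop:N_body_Hamiltonian} is invoked. The only step worth a brief comment is the identification of $B^*$ as an operator into $L^{6/5}(\RR^3)$ rather than merely into a distributional dual, but this is precisely what the $L^2$-pivot convention encodes, since $L^{6/5}$ is the topological dual of $L^6$ and $B$ is bounded with $L^2$ as the intermediate space.
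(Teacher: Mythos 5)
Your proposal is correct and follows essentially the same route as the paper's proof: reduce the claim, via unitarity of $e^{-\ii\tau(H_N-E_0)}$ and $L^2$-pivot duality for $B^*$, to the boundedness of $B:L^6(\RR^3)\to L^2$, which is then a Hölder estimate using $\rho\in L^1\cap L^\infty$ from Proposition~\ref{prop:N_body_Hamiltonian} (hence $\rho\in L^{3/2}\cap L^{6/5}$). You simply spell out the Hölder/interpolation bookkeeping that the paper's one-line estimate $\|Bv\|_{L^2}\leq C\|v\|_{L^6}\|\rho\|_{L^{3/2}}$ leaves implicit.
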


\begin{proof}
  Since $H_N(v_\mathrm{ext},w)$ is self-adjoint, it is enough to show that $B$ is a bounded operator from $L^{6}(\RR^3)$ to $L^2(\RR^3)$.
  Let $v \in L^6(\RR^3)$ then we have
  \begin{equation}
    \| Bv \|_{L^2} \leq C \| v \|_{L^6} \|\rho\|_{L^{3/2}},
  \end{equation}
  for a constant $C$ independent of $v$ or $\rho$.
  The result follows by Proposition~\ref{prop:N_body_Hamiltonian}
\end{proof}

Other choices of spaces are possible. Our choice is motivated by the role played later by the operator $(v_c^{1/2})^* \chi v_c^{1/2}$ in the definition of the phRPA correlation energy, where $v_c$ is the square root of the Hartree operator $v_c$ defined by $v_cf(r) = \int_{\RR^3} \frac{f(r')}{|r-r'|} \, \mathrm{d}r'$. 
In the Fourier representation, $v_c$ is the multiplication operator by $\tfrac{4\pi}{|q|^2}$, hence $v_c^{1/2}$ is the multiplication by $\tfrac{\sqrt{4\pi}}{|q|}$. 
Its real space representation is the convolution by $\tfrac{1}{\sqrt{4\pi}|r-r'|^2}$.
By the Hardy-Littlewood-Sobolev inequality, $v_c^{1/2} : L^2(\RR^3) \to L^{6}(\RR^3)$ is bounded. 

\begin{remark}
Let $\Phi \in \bigwedge\limits_{i=1}^N L^2(\RR^3\times \Z_2)$ and $v \in L^{6}(\RR^3)$, then
\begin{align}
  \braket{\Phi}{Bv} &= \sum\limits_{s_1,\dots,s_N \in \Z_2} \int_{\RR^{3N}} \Phi(x_1\cdots x_N) \Big( \sum\limits_{i=1}^N v(r_i) - \braket{\rho}{v}\Big) \Psi_0(x_1,\dots,x_N) \, \mathrm{d}r_1 \cdots \, \mathrm{d}r_N \\ 
  & = N \sum\limits_{s_1,\dots,s_N \in \Z_2} \int_{\RR^3} v(r) \int_{\RR^{3(N-1)}} \Phi(x,\bar{x}) \Psi_0(x,\bar{x}) \, \mathrm{d}\bar{r} \, \mathrm{d}r - \braket{\rho}{v} \braket{\Phi}{\Psi_0},
\end{align}
where we used the convention $x_i = (r_i,s_i) \in \RR^3 \times \ZZ_2$.
So the adjoint of $B$ is given by
\begin{equation}
  \label{eq:Badjoint}
  B^* \Phi(r_1) = N \sum\limits_{s_1,\dots,s_N \in \Z_2} \int_{\RR^{3(N-1)}} \Phi(x_1,\bar{x}_1) \Psi_0(x_1,\bar{x}_1) \, \mathrm{d}\bar{r}_1 - \braket{\Psi_0}{\Phi} \rho(r_1).
\end{equation}
  Since $B^* \Psi_0 = 0$, one can introduce the operator $H_N^\sharp(v_\mathrm{ext},w) = H_N(v_\mathrm{ext},w) \big|_{\lbrace \Psi_0 \rbrace^\perp}$ and equivalently define $\chi$ as 
  \[
    \chi(\tau) = 2 \, \mathrm{Re} \Big( -\ii \theta(\tau)B^* e^{-\ii \tau (H^\sharp_N(v_\mathrm{ext},w)-E_0)} B\Big) .
  \]
  This observation will be useful to define the Laplace transform of $\chi$ which appears in the expression of the phRPA correlation energy.
\end{remark}

\subsubsection{The Laplace transform of $\chi$}

For a causal operator-valued function $f \in L^\infty(\RR,\cB(\cH))$ (i.e., $f(\tau) = 0$ for $\tau<0$) where $\cH$ is a Hilbert space, a Laplace transform can be defined:
\begin{equation}
  \label{eq:laplace_transform}
  \widetilde{f}(z) = \int_\RR f(\tau) e^{\ii z\tau} \, \mathrm{d}\tau,
\end{equation}
for $z \in \UU = \lbrace z \in \CC | \Im(z) >0 \rbrace$.

A general exposition of the Laplace transform for causal operator-valued functions can be found in~\cite{Cances2016GWpaper}, including a discussion on the conditions under which the Laplace transform converges to the Fourier transform as $z$ goes to the real line.
This type of results are essentially operator versions of Titchmarsh theorem~\cite{titchmarsh}.
For our purposes, the following proposition will be sufficient.

\begin{proposition}[Proposition 13 in \cite{Cances2016GWpaper}]
  \label{prop:laplace_transform}
  Let $A$ be a bounded from below self-adjoint operator on a Hilbert space $\cH$. Let $f(\tau) = -\ii \theta(\tau) e^{-\ii \tau A}$. The Laplace transform of $f$ is given by 
  \begin{equation}
    \widetilde{f}(z) = -(A-z)^{-1},
  \end{equation}
  and defined for all $z \notin \sigma(A)$.
\end{proposition}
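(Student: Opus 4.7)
The plan is to compute $\widetilde{f}(z)$ directly from its definition for $z \in \UU$ using the spectral calculus, and then observe that the resulting formula extends analytically to the resolvent set $\CC \setminus \sigma(A)$.

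Concretely, for $z \in \UU$, I would write
\begin{equation*}
\widetilde{f}(z) = \int_\RR -\ii \theta(\tau) e^{-\ii \tau A} e^{\ii z \tau} \, \mathrm{d}\tau = -\ii \int_0^\infty e^{\ii \tau (z - A)} \, \mathrm{d}\tau,
\end{equation*}
where the operator exponential is defined via the spectral theorem. Let $-M$ be a lower bound of $A$. For any $\lambda \in \sigma(A) \subset [-M,\infty)$ and $\tau \geq 0$, one has $|e^{\ii\tau(z-\lambda)}| = e^{-\tau \Im(z)}$, so the spectral calculus gives $\|e^{\ii\tau(z-A)}\|_{\cB(\cH)} \leq e^{-\tau \Im(z)}$. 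This bound is integrable on $[0,\infty)$, hence the Bochner integral above converges in operator norm.

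The next step is to evaluate the scalar integral pointwise in the spectrum and then reassemble: by Fubini for spectral measures,
\begin{equation*}
\widetilde{f}(z) = -\ii \int_{\sigma(A)} \Bigl( \int_0^\infty e^{\ii \tau (z-\lambda)} \, \mathrm{d}\tau \Bigr) \, \mathrm{d}E_\lambda = \int_{\sigma(A)} \frac{1}{\lambda - z} \, \mathrm{d}E_\lambda = -(A-z)^{-1},
\end{equation*}
where the inner integral is computed elementarily using $\Im(z-\lambda) > 0$. This proves the claim for $z \in \UU$.

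Finally, the right-hand side $-(A-z)^{-1}$ is a $\cB(\cH)$-valued analytic function of $z$ on the whole resolvent set $\CC \setminus \sigma(A)$. Since $\UU$ is contained in the resolvent set (because $\sigma(A) \subset \RR$) and the identity $\widetilde{f}(z) = -(A-z)^{-1}$ holds on the open set $\UU$, analytic continuation extends the identity to all of $\CC \setminus \sigma(A)$, which gives the stated domain. There is no serious obstacle here: the only mildly delicate point is justifying the interchange of the time integral with the spectral measure, which is handled by the uniform exponential bound above together with standard functional calculus arguments.
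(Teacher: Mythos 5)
Your argument is correct, but note that the paper itself offers no proof to compare against: the proposition is quoted verbatim from Proposition 13 of the cited reference \cite{Cances2016GWpaper}, so your spectral-calculus computation is essentially a reconstruction of the standard argument rather than an alternative to anything in this paper. The structure is sound: the uniform bound $\|e^{\ii\tau(z-A)}\|\le e^{-\tau\Im(z)}$ justifies the Bochner integral on $\UU$, Fubini with the spectral measure reduces the computation to a scalar integral, and the identification with the resolvent plus analyticity of $z\mapsto -(A-z)^{-1}$ gives the extension to $\CC\setminus\sigma(A)$. Two small points to clean up. First, a sign slip in your second display: the inner integral equals $\ii/(z-\lambda)$, so after multiplying by $-\ii$ you should get $\int_{\sigma(A)} (z-\lambda)^{-1}\,\mathrm{d}E_\lambda$, not $\int_{\sigma(A)} (\lambda-z)^{-1}\,\mathrm{d}E_\lambda$; as written, your middle expression equals $+(A-z)^{-1}$, and only a second compensating sign error returns you to the correct final formula $-(A-z)^{-1}$. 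Second, the bounded-from-below hypothesis is doing quiet work in your last step: it guarantees $\sigma(A)\subset[-M,\infty)\subsetneq\RR$, so $\CC\setminus\sigma(A)$ is connected and the analytic continuation from $\UU$ is unique, which is what makes the phrase ``defined for all $z\notin\sigma(A)$'' meaningful (if $\sigma(A)$ were all of $\RR$, the values on the lower half-plane would not be determined by those on $\UU$). With those two remarks incorporated, your proof is complete.
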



We can now deduce the Laplace transform of $\chi$.

\begin{proposition}[Laplace transform of $\chi$]
  The Laplace transform of the retarded linear response operator $\widetilde{\chi}$ defined for $z \notin \sigma(H^\sharp_N(v_\mathrm{ext},w)-E_0^N) \cup \sigma(E_0^N-H^\sharp_N(v_\mathrm{ext},w))$ is given by
  \begin{equation}
    \label{eq:laplace_transform_chi}
    \widetilde{\chi}(z) = - B^*(H^\sharp_N(v_\mathrm{ext},w) -E_0^N - z)^{-1} B-B^*(H^\sharp_N(v_\mathrm{ext},w) -E_0^N + z)^{-1} B.
  \end{equation}
  For $\omega \in \RR, \omega \not= 0$, 
  \begin{equation}
    \widetilde{\chi}(\ii \omega) =-2B^* (H^\sharp_N(v_\mathrm{ext},w)-E_0^N)((H^\sharp_N(v_\mathrm{ext},w)-E_0^N)^2+\omega^2)^{-1} B,
  \end{equation}
  moreover
  \begin{equation}
    \label{eq:remarkable_identity}
    \int_\RR \widetilde{\chi}(\ii \omega) \, \mathrm{d}\omega = - 2\pi B^*B.
  \end{equation}
\end{proposition}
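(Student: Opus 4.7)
My plan is to establish the three assertions in sequence, throughout setting $A := H^\sharp_N(v_\mathrm{ext},w) - E_0^N$, a non-negative self-adjoint operator on $\lbrace \Psi_0 \rbrace^\perp$ whose spectrum is contained in $[0,\infty)$. The first step is to unwind the real part in the definition of $\chi$: since $A$ is self-adjoint, the adjoint of $-\ii\theta(\tau) B^* e^{-\ii\tau A} B$ (in the duality between $L^6$ and $L^{6/5}$) is $\ii\theta(\tau) B^* e^{\ii\tau A} B$, so that
\begin{equation*}
  \chi(\tau) = -\ii\theta(\tau) B^* e^{-\ii\tau A} B + \ii\theta(\tau) B^* e^{\ii\tau A} B.
\end{equation*}
Proposition~\ref{prop:laplace_transform} applies directly to the first term and yields $-B^*(A-z)^{-1}B$ on $z \notin \sigma(A)$. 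It does not apply to the second term (whose generator $-A$ is bounded above rather than below), so I would compute that Laplace transform by hand: for $z \in \UU$,
\begin{equation*}
  \int_0^\infty \ii\, e^{\ii\tau(A+z)}\,\mathrm{d}\tau = -(A+z)^{-1},
\end{equation*}
where convergence in operator norm is ensured by $\|e^{\ii\tau(A+z)}\| \leq e^{-\mathrm{Im}(z)\tau}$. Summing the two transforms produces~\eqref{eq:laplace_transform_chi} on $\UU$, and since both resolvents are analytic off their respective spectra, the formula extends to $z \notin \sigma(A) \cup \sigma(-A)$.

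The second assertion is then algebraic: specialising $z = \ii\omega$ with $\omega \neq 0$ and combining the two resolvents over the common denominator $(A-\ii\omega)(A+\ii\omega) = A^2 + \omega^2$ gives $-2A(A^2+\omega^2)^{-1}$. For the integral identity, I would invoke the spectral theorem for $A$: for each $\lambda > 0$, the scalar integral $\int_\RR \frac{\lambda}{\lambda^2+\omega^2}\,\mathrm{d}\omega = \pi$, so by functional calculus $\int_\RR A(A^2+\omega^2)^{-1}\,\mathrm{d}\omega = \pi I$ on $\lbrace \Psi_0 \rbrace^\perp$ (using $\ker A = \lbrace 0 \rbrace$, which follows from simplicity of $E_0^N$). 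Sandwiching with $B^*$ on the left and $B$ on the right then yields $-2\pi B^*B$.

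The delicate point, on which I would spend the most care, is the justification of the interchange of the $\omega$-integral with the spectral integral in this last step. The operator-norm bound $\|A(A^2+\omega^2)^{-1}\| \leq 1/(2|\omega|)$ is not Lebesgue integrable near $\omega = 0$, so the identity cannot hold uniformly and must be interpreted in the strong sense. I would test against an arbitrary $v \in L^6(\RR^3)$, so that $Bv \in L^2$, and apply Fubini to the positive double integral
\begin{equation*}
  \int_\RR \int_{\sigma(A)} \frac{\lambda}{\lambda^2+\omega^2}\,\mathrm{d}\langle E_\lambda Bv, Bv \rangle\,\mathrm{d}\omega = \pi \|Bv\|_{L^2}^2 < \infty,
\end{equation*}
the right-hand side being obtained by performing the $\omega$-integral first. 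This legitimises the exchange and delivers the identity~\eqref{eq:remarkable_identity} against each test vector $v$.
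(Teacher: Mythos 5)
Your proposal is correct and follows essentially the same route as the paper: split the real part into the two terms $-\ii\theta(\tau)B^*e^{-\ii\tau A}B + \ii\theta(\tau)B^*e^{\ii\tau A}B$, take Laplace transforms to get \eqref{eq:laplace_transform_chi}, combine resolvents at $z=\ii\omega$, and evaluate the $\omega$-integral via the spectral theorem, the scalar identity $\int_\RR \lambda(\lambda^2+\omega^2)^{-1}\,\mathrm{d}\omega=\pi$, and a Fubini/Tonelli exchange. Your added care — computing the second term's transform by hand (since Proposition~\ref{prop:laplace_transform} literally requires a generator bounded from below) and justifying the exchange weakly against test vectors using only $\ker A=\lbrace 0\rbrace$ rather than a spectral gap — is a minor refinement of the paper's argument, not a different approach.
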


The last equation~\eqref{eq:remarkable_identity} plays an essential part in the derivation of the phRPA correlation energy as it turns out it gives a relationship between $\chi$ and the pair density $\rho^{(2)}$ (see Proposition~\ref{prop:chi_to_pair_density})

\begin{proof}
  Writing 
  \[
    \chi(\tau) = -\ii \theta(\tau) B^* e^{-\ii \tau (H^\sharp_N(v_\mathrm{ext},w)-E_0)} B + \ii\theta(\tau) B^* e^{\ii \tau (H^\sharp_N(v_\mathrm{ext},w)-E_0)} B,
  \]
  and using Proposition~\ref{prop:laplace_transform}, we have \eqref{eq:laplace_transform_chi}.

  The last assertion is proved as follows. Let $P_\lambda$ be the projector-valued spectral measure of $H^\sharp_N(v_\mathrm{ext},w)-E_0^N$. Let $\mu$ be such that $E_0^N <\mu <E_1^N$. Then we have
  \begin{align*}
    \int_\RR \widetilde{\chi}(\ii \omega) \, \mathrm{d}\omega &= - 2B^* \int_\RR \int_\mu^{\infty} \frac{\lambda}{\lambda^2+\omega^2} \, \mathrm{d}P_\lambda \, \mathrm{d}\omega B
     = - 2B^* \int_{\mu}^\infty \int_\RR \frac{\lambda}{\lambda^2+\omega^2}\, \mathrm{d}\omega \, \mathrm{d}P_\lambda B \\
    & = -2\pi B^*(\id-|\Psi_0 \rangle \langle \Psi_0|) B = -2\pi B^*B.
  \end{align*}  
  where we used that $\int_\RR \frac{\alpha}{\alpha^2+\omega^2} \, \mathrm{d}\omega = \pi$ for $\alpha>0$.

\end{proof}

\subsubsection{The noninteracting retarded linear response operator $\chi_0$}

The definition of the retarded linear response operator of a noninteracting system can be simplified.

Let $h = -\frac{1}{2} \Delta + v_0$ with $v_0 \in L^2(\RR^3)+L_\varepsilon^\infty(\RR^3)$ be an operator acting on $L^2(\RR^3)$ with domain $H^2(\RR^3)$. 
We assume that $v_0$ gives rise to at least $n=\tfrac{N}{2}$ negative eigenvalues below the essential spectrum of $h$.
We denote $(\epsilon_i,\phi_i)$ the eigenpairs of $h$.
Additionally, we require an energy gap between the $n$-th and $n+1$-th eigenvalues.\footnote{if the $n+1$-th does not exist, we require a gap between the $n$-th and the bottom of the essential spectrum.}


\begin{assumption}[Energy gap in the one-electron model]
  \label{assumption:energy_gap}
We assume that $\epsilon_n < \epsilon_{n+1}$ (existence of a spectral gap).
\end{assumption}

Let $\Phi_0 = \phi_1\delta_{\uparrow} \wedge \, \phi_1 \delta_{\downarrow} \wedge \cdots \wedge \phi_n \delta_\uparrow \wedge \, \phi_n \delta_{\downarrow}$ be the ground-state of the noninteracting Hamiltonian $H_N(v_0) = H_N({v_0},0)$ with eigenvalue $E_0 = 2 \sum\limits_{i=1}^n \epsilon_i$. 
The electronic density $\rho_0$ is given by $\rho_0(r) = 2\sum\limits_{i=1}^n |\phi_i(r)|^2$. 

The retarded linear response operator $\chi_0$ of $H_N(v_0)$ is 
\begin{equation}
  \tilde{\chi}_0(\ii \omega) = -2B_0^* (H_N(v_0)-E_0^N)[(H_N(v_0)-E_0^N)^2 + \omega^2]^{-1} B_0,
\end{equation}
where $B_0 : L^6(\RR^3) \to \bigwedge_{i=1}^N L^2(\RR^3 \times \ZZ_2)$ is given by  
\[
  B_0v(x_1,\dots,x_N) = \big( \sum\limits_{i=1}^N v(r_i)-\braket{\rho_0}{v} \big) \Phi_0(x_1,\dots,x_N)  
\]
and its adjoint $B_0^* : \bigwedge_{i=1}^N L^2(\RR^3 \times \ZZ_2) \to L^{6/5}(\RR^3)$ by
\[
  B_0^*\Phi(r_1) = N \sum_{s_1, \dots, s_N \in \ZZ_2} \int_{\RR^{3(N-1)}} \Phi_0(x_1,\bar{x}_1) \Phi(x_1,\bar{x}_1) \, \mathrm{d} \bar{r}_1 - \braket{\Phi_0}{\Phi} \rho_0(r_1).
\]

\begin{lemma}
  \label{lem:H_Ntoh}
  Let $P^h_\mu$, $P^{H_N(v_0)}_\lambda$ be respectively the projector-valued spectral measure of $h$ and $H_N(v_0)$. Let $f$ be a bounded continuous function of $\RR$. Then 
  \begin{equation}
    \int_\RR f(\lambda) \, \mathrm{d}P^{H_N(v_0)}_\lambda = \int_{\RR^N} f(\mu_1 + \dots + \mu_N) \, \mathrm{d}P^h_{\mu_1} \otimes \cdots \otimes \mathrm{d}P^h_{\mu_N}. 
  \end{equation}
\end{lemma}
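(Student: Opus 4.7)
The plan is to exploit the fact that $H_N(v_0)$ is a sum of mutually commuting one-body operators, and then apply the functional calculus for a commuting family of self-adjoint operators.

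First I would introduce, for each $i=1,\ldots,N$, the one-body operator $h_i$ acting on $\bigotimes_{j=1}^N L^2(\RR^3 \times \ZZ_2)$ as $h_i = \mathbf{1} \otimes \cdots \otimes h \otimes \cdots \otimes \mathbf{1}$ with $h$ in the $i$-th slot. Each $h_i$ is self-adjoint, its spectral projector on a Borel set $E \subset \RR$ is $P^h(E)$ in the $i$-th slot and the identity elsewhere, and the operators $\{h_i\}_{i=1}^N$ commute pairwise since they act on distinct tensor factors. Moreover,
\begin{equation*}
  H_N(v_0) = h_1 + h_2 + \cdots + h_N
\end{equation*}
as operators on the tensor product space, which is the starting point of the argument.

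Next I would invoke the spectral theorem for a finite family of commuting self-adjoint operators: there exists a unique joint projector-valued measure $Q$ on $\RR^N$ satisfying
\begin{equation*}
  Q(E_1 \times \cdots \times E_N) = P^h(E_1) \otimes \cdots \otimes P^h(E_N)
\end{equation*}
on product Borel sets, and such that for bounded Borel functions $g : \RR^N \to \RR$ one has $g(h_1,\ldots,h_N) = \int_{\RR^N} g(\mu_1,\ldots,\mu_N) \, dQ$. Applying this to $g(\mu_1,\ldots,\mu_N) = f(\mu_1 + \cdots + \mu_N)$ with $f$ continuous and bounded yields
\begin{equation*}
  f(H_N(v_0)) = f(h_1 + \cdots + h_N) = \int_{\RR^N} f(\mu_1 + \cdots + \mu_N) \, dP^h_{\mu_1} \otimes \cdots \otimes dP^h_{\mu_N},
\end{equation*}
while on the other hand the spectral theorem for $H_N(v_0)$ alone gives $f(H_N(v_0)) = \int_\RR f(\lambda) \, dP^{H_N(v_0)}_\lambda$. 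Comparing the two expressions yields the identity.

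Finally, I would address the fact that in the paper $H_N(v_0)$ is considered on the antisymmetric subspace $\bigwedge_{i=1}^N L^2(\RR^3 \times \ZZ_2)$. Since $H_N(v_0)$, the joint PVM $Q$, and the one-body spectral projectors $P^h_{\mu_i}$ are all symmetric under particle exchange, the antisymmetrizer $\mathcal{A}$ commutes with both sides of the identity, so restricting to the antisymmetric subspace preserves the equality. The only mild obstacle is ensuring uniqueness and the correct product structure of the joint PVM of a commuting family, but this is a standard consequence of the spectral theorem; everything else is pure functional calculus.
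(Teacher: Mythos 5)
Your approach is correct in spirit but routes through heavier machinery than the paper, and it defers exactly the step where the real work lies. The paper (taking $N=2$ for simplicity) sets $A=h\otimes\id$, $B=\id\otimes h$, and verifies the identity for the functions $f_t(\lambda)=e^{-\ii t\lambda}$ by using $e^{-\ii t(A+B)}=e^{-\ii tA}e^{-\ii tB}$, i.e.\ it matches the Fourier transforms of the two spectral measures; the extension to general bounded continuous $f$ is then a standard uniqueness/density argument. You instead invoke the joint projector-valued measure $Q$ of the commuting family $(h_1,\dots,h_N)$ and the push-forward of $Q$ under the addition map $(\mu_1,\dots,\mu_N)\mapsto\mu_1+\cdots+\mu_N$. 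That is a legitimate alternative, but the chain $f(H_N(v_0))=f(h_1+\cdots+h_N)=\int_{\RR^N}f(\mu_1+\cdots+\mu_N)\,\mathrm{d}Q$ is precisely the content of the lemma: since the $h_i$ are unbounded, you must justify that the a priori given self-adjoint operator $H_N(v_0)$ (the operator sum on the tensor Sobolev domain) coincides with the self-adjoint operator $s(h_1,\dots,h_N)$ produced by the joint functional calculus from $s(\mu)=\mu_1+\cdots+\mu_N$, so that its spectral measure is the push-forward $Q\circ s^{-1}$. Declaring this ``a standard consequence of the spectral theorem'' hides the argument; the usual proof of that very fact goes through the exponential identity the paper writes down (or through resolvents), so your proposal and the paper's proof ultimately rest on the same computation, with the paper making it explicit. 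If you flesh out that identification (e.g.\ by noting both operators generate the same unitary group, or agree on a common core on which the sum is essentially self-adjoint), your argument is complete. Your extra remark about restricting to the antisymmetric subspace is a reasonable point of care that the paper passes over silently, and it is handled correctly by observing that both sides commute with the antisymmetrizer.
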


\begin{proof}
  For simplicity, we prove the lemma for $N=2$. Let $A = h \otimes \id$ and $B = \id \otimes \, h$. 
  The operators $A$ and $B$ commute. Denote by $P^A_\mu$ and $P^B_\nu$ their respective projection-valued measures. 
  By definition, $P^A_\mu = P^h_\mu \otimes \id$ and $P^B_\nu = \id \otimes \, P^h_\nu$. 
  Since $A$ and $B$ commute, for $t\in \RR$, we have 
  \begin{equation}
    e^{-\ii t (A+B)} = e^{-\ii t A}e^{-\ii t B},
  \end{equation}
  thus
  \begin{align*}
    \int_\RR e^{-\ii t \lambda} \, \mathrm{d}P^{H_N}_\lambda &= \int_\RR e^{-\ii t \mu} \, \mathrm{d}P^{A}_\mu \int_\RR e^{-\ii t \nu} \, \mathrm{d}P^{B}_\nu= \int_\RR \int_\RR e^{-\ii t (\mu+\nu)} \, \mathrm{d}P^{A}_\mu \mathrm{d}P^{B}_\nu = \int_\RR \int_\RR e^{-\ii t (\mu+\nu)} \, \mathrm{d}P^h_\mu \otimes \mathrm{d}P^h_\nu.
  \end{align*}
\end{proof}

\begin{proposition}[Noninteracting polarizability operator with doubly occupied orbitals]
  \label{prop:chi_zero}
  Let $f \in L^2(\RR^3)$. The noninteracting polarizability operator is given by
  \begin{equation}
    \label{eq:chi0}
    \tilde{\chi}_0(\ii \omega) f(r) = -4\sum\limits_{k=1}^n \phi_k(r) \frac{h-\epsilon_k}{(h-\epsilon_k)^2 + \omega^2}(f\phi_k)(r).
  \end{equation}
\end{proposition}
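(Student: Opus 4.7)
The plan is to compute $\tilde\chi_0(\ii\omega)v$ directly by diagonalizing $H_N(v_0)-E_0^N$ on the range of $B_0$, working in second quantization to exploit the Slater-determinant structure of $\Phi_0$. Expanding $\sum_{i=1}^N v(r_i) = \sum_{p,q,\sigma} v_{pq}\, a^\dagger_{p\sigma}a_{q\sigma}$ with $v_{pq} = \langle\phi_p,v\phi_q\rangle$ and applying it to $\Phi_0$: the diagonal occupied-occupied terms contribute $\sum_\sigma\sum_{p\le n} v_{pp}\Phi_0 = \langle\rho_0,v\rangle\Phi_0$ and exactly cancel the counterterm in $B_0 v$; terms with $q$ virtual annihilate $\Phi_0$; and off-diagonal occupied-occupied terms vanish because the target spin-orbital is already occupied. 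What remains is a sum of single particle-hole excitations,
\[
  B_0 v = \sum_{\sigma\in\Z_2}\sum_{a>n}\sum_{i=1}^n v_{ai}\, a^\dagger_{a\sigma} a_{i\sigma}\Phi_0.
\]

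By Lemma~\ref{lem:H_Ntoh} (or directly from the one-body structure of $H_N(v_0)$), each excited determinant $a^\dagger_{a\sigma}a_{i\sigma}\Phi_0$ is an eigenvector of $H_N(v_0)-E_0^N$ with eigenvalue $\epsilon_a-\epsilon_i \ge \epsilon_{n+1}-\epsilon_n>0$ under Assumption~\ref{assumption:energy_gap}, which justifies the functional calculus. Applying $(H_N-E_0^N)[(H_N-E_0^N)^2+\omega^2]^{-1}$ then rescales each summand by the scalar $(\epsilon_a-\epsilon_i)/[(\epsilon_a-\epsilon_i)^2+\omega^2]$. Next I apply $B_0^*$ using its formula \eqref{eq:Badjoint}: the projection term $\langle\Phi_0,\cdot\rangle\rho_0$ vanishes since single excitations are orthogonal to $\Phi_0$, while a Wick contraction yields the transition density $\langle\Phi_0,\hat n(r)\, a^\dagger_{a\sigma}a_{i\sigma}\Phi_0\rangle = \phi_i(r)\phi_a(r)$, independently of $\sigma$. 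Combining the $-2$ prefactor in $\tilde\chi_0$ with the factor $2$ from the spin sum produces
\[
  \tilde\chi_0(\ii\omega) v(r) = -4 \sum_{i=1}^n \sum_{a>n} \frac{\epsilon_a-\epsilon_i}{(\epsilon_a-\epsilon_i)^2+\omega^2}\, v_{ai}\, \phi_i(r)\phi_a(r).
\]

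Finally I would recast this as the claimed resolvent expression. By the spectral decomposition of $h$,
\[
  \frac{h-\epsilon_k}{(h-\epsilon_k)^2+\omega^2}(v\phi_k)(r) = \sum_m \phi_m(r)\,\frac{\epsilon_m-\epsilon_k}{(\epsilon_m-\epsilon_k)^2+\omega^2}\, v_{mk},
\]
interpreted as a spectral integral if $h$ has continuous spectrum. Multiplying by $\phi_k(r)$ and summing $k=1,\ldots,n$, the $m\le n$ terms form an antisymmetric tensor in $(k,m)$ and cancel pairwise (since $v$ and the orbitals are real so $v_{mk}=v_{km}$, while the energy factor is odd under the swap), whereas the $m>n$ terms reproduce the expression for $\tilde\chi_0(\ii\omega)v$ just derived. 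The main technical care throughout is combinatorial bookkeeping: the factor of two from the spin sum, the exact cancellation of the Hartree counterterm in $B_0$ against the diagonal terms of $\sum_i v(r_i)$, and the antisymmetric cancellation in the final reindexing that converts the explicit occupied-virtual sum into a sum over occupied orbitals only.
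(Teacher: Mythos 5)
Your bookkeeping (cancellation of the Hartree counterterm against the diagonal occupied terms, vanishing of occupied--occupied replacements, the spin factor, and the final antisymmetric reindexing) is all correct, and in the special case where $h$ admits a complete orthonormal eigenbasis your argument does prove \eqref{eq:chi0}; it is essentially the chemistry formula \eqref{eq:chem_chi_0} derived and then read backwards. The genuine gap is that the proposition has to hold for $h=-\tfrac{1}{2}\Delta+v_0$ with $v_0\in L^2+L^\infty_\varepsilon$, for which $\sigma_{\mathrm{ess}}(h)=[0,\infty)$ (this is exactly the situation for the H and H$_2$ models used later). There is then no complete set of eigenfunctions, so the expansion $\sum_i v(r_i)=\sum_{p,q,\sigma}v_{pq}\,a^\dagger_{p\sigma}a_{q\sigma}$ over eigenorbitals is unavailable, the sum over virtuals $a>n$ does not resolve the identity on the virtual space, and the assertion that each $a^\dagger_{a\sigma}a_{i\sigma}\Phi_0$ is an eigenvector of $H_N(v_0)-E_0^N$ has no meaning for the continuous part of the spectrum. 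Your parenthetical ``interpreted as a spectral integral if $h$ has continuous spectrum'' only concerns the last rewriting step; the eigenbasis does real work in the earlier steps where $B_0v$ is decomposed and the $N$-body functional calculus is evaluated. Since the stated purpose of \eqref{eq:chi0} (see the remark following the proposition and the conclusion) is precisely to generalize \eqref{eq:chem_chi_0} beyond purely discrete spectrum, a proof valid only under that extra hypothesis does not establish the statement.

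The paper's proof avoids any diagonalization of $h$: it writes $\Phi_0$ as an explicit Slater sum and uses Lemma~\ref{lem:H_Ntoh} to evaluate the function of $H_N(v_0)-E_0^N$ on products in which $N-1$ factors are occupied eigenfunctions, so that the $N$-body operator collapses to the one-body function $\frac{h-\epsilon_k}{(h-\epsilon_k)^2+\omega^2}$ acting in the remaining slot, and then contracts with $B_0^*$ using orthonormality. Your second-quantized route can be repaired in the same spirit: with $Q=\id-\sum_{j\le n}|\phi_j\rangle\langle\phi_j|$ (which commutes with $h$), your occupied--occupied cancellations give $B_0v=\sum_\sigma\sum_{i\le n}a^\dagger[Q(v\phi_i)]\,a_{i\sigma}\Phi_0$, where $a^\dagger[g]$ creates an arbitrary $g\in\mathrm{Ran}\,Q$; each subspace $\lbrace a^\dagger[g]a_{i\sigma}\Phi_0 : g\in\mathrm{Ran}\,Q\rbrace$ is invariant under $H_N(v_0)$ and on it $H_N(v_0)-E_0^N$ acts as $h-\epsilon_i$ on $g$, so the resolvent function can be applied blockwise without an eigenbasis. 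Contracting with $B_0^*$ then yields $-4\sum_{i\le n}\phi_i\,\frac{h-\epsilon_i}{(h-\epsilon_i)^2+\omega^2}Q(f\phi_i)$, and your final antisymmetric-cancellation argument, which involves only the finitely many occupied bound states and therefore survives in the continuous-spectrum setting, removes the $Q$ and gives \eqref{eq:chi0}.
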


\begin{proof} Let $(\psi_i)_{1 \leq i \leq N} \in L^2(\RR^3 \times \Z_2)$ defined by $\psi_{2k}(x) = \phi_k(r)\delta_\uparrow(s)$ and $\psi_{2k-1}(x) = \phi_k(r)\delta_\downarrow(s)$. 

  \textbf{Step 1.} We first show that 
  \begin{equation}
    \label{eq:awful1}
    \frac{H_N(v_0)-E_0^N}{(H_N(v_0)-E_0^N)^2 + \omega^2} B_0 f = \frac{1}{\sqrt{N!}} \sum\limits_{j=1}^N \sum\limits_{\sigma \in S_N} (-1)^\sigma \left[\frac{h-\epsilon_{\sigma(j)}}{(h-\epsilon_{\sigma(j)})^2 + \omega^2}f \right](x_j) \prod_{k=1}^N\psi_{\sigma(k)}(x_k).
  \end{equation}

  Since $H_N(v_0)\Phi_0 = E_0^N \Phi_0$, we have
  \begin{equation}
    \frac{H_N(v_0)-E_0^N}{(H_N(v_0)-E_0^N)^2 + \omega^2} B_0 f = \frac{H_N(v_0)-E_0^N}{(H_N(v_0)-E_0^N)^2 + \omega^2} \sum\limits_{i=1}^N f(x_i) \Phi_0(x_1,\dots,x_N).
  \end{equation}
  Applying Lemma~\ref{lem:H_Ntoh}, using that for $\sigma \in S_N$, $E_0 = \sum\limits_{k=1}^N \epsilon_{\sigma(k)}$ we get \eqref{eq:awful1}.

  \textbf{Step 2.} We have
  \begin{align}
    B_0^*& \frac{H_N(v_0)-E_0^N}{(H_N(v_0)-E_0^N)^2 + \omega^2} B_0 f(r) \\
    &= \frac{N}{\sqrt{N!}} \sum_{s,s_2,\dots,s_N \in \mathbb{Z}_2} \int_{\RR^{3(N-1)}} \Phi_0(x,x_2,\dots,x_N) \sum\limits_{\sigma \in S_N} (-1)^\sigma \left[\frac{h-\epsilon_{\sigma(j)}}{(h-\epsilon_{\sigma(j)})^2 + \omega^2}\right] \big(f(r) \psi_{\sigma(1)}(r,s)\big) \nonumber \\
    & \qquad \qquad  \qquad \qquad \prod_{k \geq 2}\psi_{\sigma(k)}(x_k) \, \mathrm{d}r_2 \dots \, \mathrm{d}r_N  \nonumber\\ 
    & \qquad - \braket{\Phi_0}{\frac{1}{\sqrt{N!}} \sum\limits_{j=1}^N \sum\limits_{\sigma \in S_N} (-1)^\sigma \left[\frac{h-\epsilon_{\sigma(j)}}{(h-\epsilon_{\sigma(j)})^2 + \omega^2}f\right] (x_j) \prod_{k=1}^N\psi_{\sigma(k)}(x_k)} \rho_0(r)
  \end{align}
  The second term on the RHS of the above equation vanishes since $(h-\epsilon_{\sigma(j)})\psi_{\sigma(j)}(x_j) = 0$. 
  Thus by orthonormality of $(\psi_i)_{1 \leq i \leq N}$, we obtain:
  \begin{equation}
      \tilde{\chi}_0(\ii \omega) f(r) = -2\sum\limits_{k=1}^N \sum_{s \in \ZZ_2} \psi_k(x) \frac{h-\epsilon_k}{(h-\epsilon_k)^2 + \omega^2}(f\psi_k)(x).
  \end{equation}
  Since $h$ does not act on the spin variable, by definition of $\psi_k$, we obtain Equation~\eqref{eq:chi0}. 
\end{proof}

\begin{remark}
  We remark that our expression \eqref{eq:chi0} is equivalent to the one typically found in the physics and chemistry literature if, as is often assumed in said literature, $h$ is diagonalizable in an orthonormal basis $(\phi_i)_{i \in \NN}$.  Under this assumption,
  \begin{equation}
    \frac{h-\epsilon_k}{(h-\epsilon_k)^2 + \omega^2}(f\phi_k)(x) = \sum\limits_{j=1}^\infty \frac{\epsilon_j-\epsilon_k}{(\epsilon_j-\epsilon_k)^2+\omega^2} \braket{\phi_j \phi_k}{f} \phi_j(x).
  \end{equation}
  Hence, formally the kernel of $\tilde{\chi}_0$ is given by 
  \begin{align}
    \widetilde{\chi}_0(x,y;\ii \omega) &= -2\sum\limits_{k=1}^n \sum\limits_{j=1}^\infty \frac{\epsilon_j-\epsilon_k}{(\epsilon_j-\epsilon_k)^2+\omega^2} \phi_j(y) \phi_k(y) \phi_j(x) \phi_k(x) \\
    &= -2\sum\limits_{k=1}^n \sum\limits_{j=n+1}^\infty \frac{\epsilon_j-\epsilon_k}{(\epsilon_j-\epsilon_k)^2+\omega^2} \phi_j(y) \phi_k(y) \phi_j(x) \phi_k(x) \\
    & = -\sum\limits_{k=1}^n \sum\limits_{j=n+1}^\infty \frac{\phi_j(y) \phi_k(y) \phi_j(x) \phi_k(x)}{\epsilon_j-\epsilon_k+\ii\omega}  + \textnormal{c.c.} \label{eq:chem_chi_0}
  \end{align}
\end{remark}

\begin{remark}
  Like the retarded linear response operator $\widetilde{\chi}$, we notice that $\tilde{\chi}_0$ can equivalently be defined by
  \begin{equation}
    \label{eq:chi0_other_definition}
    \tilde{\chi}_0(\ii \omega) f(r) = -4\sum\limits_{k=1}^n \phi_k(r) P_k \frac{h-\epsilon_k}{(h-\epsilon_k)^2 + \omega^2}P_k(f\phi_k)(r),
  \end{equation}
  where for $1 \leq k \leq n$, $P_k$ is the orthogonal projector onto $\lbrace  \phi_k \rbrace^\perp$. 
\end{remark}

\subsection{Adiabatic connection}

In Kohn-Sham DFT, there is no known useful exact formula for the correlation energy, so we must find some way to approximate it.  One way to derive such approximations is through the adiabatic connection.  The adiabatic connection connects a non-physical non-interacting system to the physical interacting system.  More specifically, let $(F^\lambda)_{0 \leq \lambda \leq 1}$ be the family of operators defined by
\begin{equation}
  F^\lambda[\rho] = F^\lambda_\mathrm{HK}[\rho] + F^\lambda_\mathrm{AC}[\rho],
\end{equation}
where $F^\lambda_\mathrm{HK}[\rho]$ is the constrained search functional
\begin{equation}
  \label{eq:F_HK_lambda_def}
  F^\lambda_\mathrm{HK}[\rho] = \inf_{\Psi \mapsto \rho} \langle \Psi, \big(-\tfrac{1}{2} \Delta + \lambda \sum_{1 \leq i < j \leq N}\frac{1}{|r_i-r_j|}\big) \Psi \rangle,
\end{equation}
and $F^\lambda_\mathrm{AC}$ is defined such that the minimizer $\rho_\lambda$ of
\begin{equation}
  E_0^\lambda = \inf_{\rho \in \mathcal{I}_N} \big( F^\lambda[\rho] + \langle v, \rho \rangle\big),
\end{equation}
is constant in $\lambda$.

By definition, $F_\mathrm{HK}^1$ is simply the Hohenberg-Kohn functional defined in Equation~\eqref{eq:F_HK_definition}, hence $F^1_\mathrm{AC}=0$. 
For $\lambda=0$, $F^0_\mathrm{HK}$ is the Kohn-Sham kinetic energy $T_s$ defined in Equation~\eqref{eq:Ts_definition}, hence by the constraint on the density, $F^0_\mathrm{AC}$ is the Hxc energy $E_\mathrm{Hxc}$ given by \eqref{eq:E_Hxc_definition}. This means by the Hohenberg-Kohn theorem that $E_0^0 = E_0^1$.

\begin{proposition}[Adiabatic connection]
  Let $\Psi^\lambda$ be the minimizer of $F^\lambda_\mathrm{HK}$ in \eqref{eq:F_HK_lambda_def}. Assuming the differentiability of $\lambda \mapsto F_\mathrm{HK}^\lambda$ and $\lambda \mapsto F_\mathrm{AC}^\lambda$, then we have
  \begin{equation}
    \label{eq:Hxc_adiabatic_connection}
    \int_0^1 \tr_{2A} \left(\rho^{(2)}_\lambda V_\mathrm{ee}\right) \dd\lambda = E_\mathrm{Hxc}[\rho],
  \end{equation}
  where $V_\mathrm{ee}$ is the multiplication operator by $\tfrac{1}{|r-r'|}$ and $\rho^{(2)}_\lambda$ is the pair density of $\Psi^\lambda$
  \begin{equation*}
    \rho^{(2)}_\lambda(r_1,r_2) = \binom{N}{2} \sum\limits_{s_1,\dots,s_N \in \Z_2} \int |\Psi^\lambda(r_1s_1,r_2s_2,\dots,r_Ns_N)|^2 \, \mathrm{d}r_3 \dots  \mathrm{d}r_N.
  \end{equation*}
\end{proposition}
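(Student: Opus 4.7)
The plan is to establish a Hellmann-Feynman identity for $F^\lambda_{\mathrm{HK}}$,
\[
\frac{d}{d\lambda} F^\lambda_{\mathrm{HK}}[\rho] = \langle \Psi^\lambda, V_{\mathrm{ee}} \Psi^\lambda \rangle,
\]
and then to integrate over $\lambda \in [0,1]$ via the fundamental theorem of calculus. The endpoints collapse: $F^1_{\mathrm{HK}}[\rho] = F_{\mathrm{HK}}[\rho]$ is the Hohenberg-Kohn functional and $F^0_{\mathrm{HK}}[\rho] = T_s[\rho]$ is the Kohn-Sham kinetic energy, so the integral equals $F_{\mathrm{HK}}[\rho] - T_s[\rho] = E_{\mathrm{Hxc}}[\rho]$ by \eqref{eq:F_HK_definition}.

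To obtain the Hellmann-Feynman identity, I would use the constrained-search definition \eqref{eq:F_HK_lambda_def}. Since $\Psi^{\lambda+\epsilon}$ is $\rho$-admissible for the search at level $\lambda$ (and vice versa), using these as trial states yields the sandwich
\[
\langle \Psi^{\lambda+\epsilon}, V_{\mathrm{ee}} \Psi^{\lambda+\epsilon}\rangle \leq \frac{F^{\lambda+\epsilon}_{\mathrm{HK}}[\rho] - F^\lambda_{\mathrm{HK}}[\rho]}{\epsilon} \leq \langle \Psi^\lambda, V_{\mathrm{ee}} \Psi^\lambda\rangle
\]
for $\epsilon > 0$, together with the symmetric bound for $\epsilon < 0$. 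Because $F^\lambda_{\mathrm{HK}}[\rho]$ is concave in $\lambda$ (the infimum of a family of affine functions of $\lambda$), the one-sided derivatives automatically coincide under the stated differentiability hypothesis, and both must equal $\langle \Psi^\lambda, V_{\mathrm{ee}} \Psi^\lambda\rangle$.

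Next, using the antisymmetry of $\Psi^\lambda$ together with the definition of the pair density $\rho^{(2)}_\lambda$, I would rewrite
\[
\langle \Psi^\lambda, V_{\mathrm{ee}} \Psi^\lambda\rangle = \binom{N}{2}\Big\langle \Psi^\lambda, \tfrac{1}{|r_1 - r_2|} \Psi^\lambda\Big\rangle = \iint \rho^{(2)}_\lambda(r_1, r_2) \tfrac{1}{|r_1 - r_2|} \, \mathrm{d}r_1 \mathrm{d}r_2 = \tr_{2A}\big(\rho^{(2)}_\lambda V_{\mathrm{ee}}\big),
\]
which, after integrating in $\lambda$ over $[0,1]$, yields the left-hand side of \eqref{eq:Hxc_adiabatic_connection}.

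The main obstacle is justifying the Hellmann-Feynman step rigorously, since one is differentiating a minimum subject to the density constraint $\Psi \mapsto \rho$. The stated differentiability of $\lambda \mapsto F^\lambda_{\mathrm{HK}}$ is precisely what lets the sandwich close. The parallel differentiability assumption on $F^\lambda_{\mathrm{AC}}$ is not needed on this path, but it provides a consistency check: since $\rho_\lambda \equiv \rho$ forces $E_0^\lambda$ to be constant in $\lambda$, differentiating $E_0^\lambda = F^\lambda[\rho] + \langle v, \rho \rangle$ yields $\frac{dF^\lambda_{\mathrm{AC}}}{d\lambda}[\rho] = -\langle \Psi^\lambda, V_{\mathrm{ee}} \Psi^\lambda\rangle$, and integrating from $0$ to $1$ with $F^0_{\mathrm{AC}} = E_{\mathrm{Hxc}}$ and $F^1_{\mathrm{AC}} = 0$ recovers the claim.
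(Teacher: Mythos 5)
Your proposal is correct, but it takes a genuinely different route from the paper's. The paper differentiates the total energy $E_0^\lambda = F^\lambda_{\mathrm{HK}}[\rho] + F^\lambda_{\mathrm{AC}}[\rho] + \langle v,\rho\rangle$ along the adiabatic connection, uses the formal stationarity identity $\langle \partial_\lambda \Psi^\lambda, (-\tfrac{1}{2}\Delta + \lambda \sum_{i<j}|r_i-r_j|^{-1})\Psi^\lambda\rangle = 0$, and then extracts $E_\mathrm{Hxc}$ from the bookkeeping $F^1_{\mathrm{AC}}=0$, $F^0_{\mathrm{AC}}=E_\mathrm{Hxc}$ together with $E_0^1=E_0^0$. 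You instead work entirely with $F^\lambda_{\mathrm{HK}}[\rho]$ at fixed density: the two-sided variational comparison (using $\Psi^{\lambda+\epsilon}$ and $\Psi^\lambda$ as mutual trial states, legitimate because both have density $\rho$) pins the derivative to $\langle\Psi^\lambda,\sum_{i<j}|r_i-r_j|^{-1}\Psi^\lambda\rangle$, and $E_\mathrm{Hxc}$ then comes directly from the endpoint values $F^1_{\mathrm{HK}}[\rho]-F^0_{\mathrm{HK}}[\rho]=F_\mathrm{HK}[\rho]-T_s[\rho]$ via \eqref{eq:F_HK_definition}. What your route buys: you never need $F^\lambda_{\mathrm{AC}}$, the equality $E_0^0=E_0^1$, or any differentiability of $\lambda\mapsto\Psi^\lambda$ (the paper's stationarity step implicitly assumes the minimizer varies smoothly), since concavity of $\lambda\mapsto F^\lambda_{\mathrm{HK}}[\rho]$ plus the assumed differentiability closes the sandwich -- your Hellmann--Feynman step is the rigorous envelope-theorem version of the paper's formal one. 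What the paper's route displays is the adiabatic-connection structure itself, the constant-density constant-energy path that motivates introducing $F^\lambda_{\mathrm{AC}}$; indeed your closing consistency check, $\partial_\lambda F^\lambda_{\mathrm{AC}} = -\langle\Psi^\lambda, V_\mathrm{ee}\Psi^\lambda\rangle$ integrated over $[0,1]$, is essentially the paper's proof read backwards. The final identification $\langle\Psi^\lambda,\sum_{i<j}|r_i-r_j|^{-1}\Psi^\lambda\rangle = \tr_{2A}\big(\rho^{(2)}_\lambda V_\mathrm{ee}\big)$ via the symmetry of $|\Psi^\lambda|^2$ is common to both arguments.
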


\begin{proof}
  By assumption, we have
  \begin{align}
    E^1_0 - E_0^0 &= \int_0^1 \frac{\mathrm{d}E^\lambda_0}{\mathrm{d}\lambda} \, \dd \lambda \\
    &= \int_0^1 \Big\langle \Psi^\lambda, \sum_{1 \leq i < j \leq N}\frac{1}{|r_i-r_j|} \Psi^\lambda \Big\rangle + \frac{\partial F^\lambda_\mathrm{AC}}{\partial\lambda}\, \dd \lambda \\
    & = \int_0^1 \Big\langle \Psi^\lambda, \sum_{1 \leq i < j \leq N}\frac{1}{|r_i-r_j|} \Psi^\lambda \Big\rangle\, \dd \lambda - E_\mathrm{Hxc},
  \end{align}
  where we used that $F^1_\mathrm{AC}=0$ and that $\langle \tfrac{\partial \Psi^\lambda}{\partial \lambda},\big(-\tfrac{1}{2} \Delta + \lambda \sum_{1 \leq i < j \leq N}\frac{1}{|r_i-r_j|}\big) \Psi^\lambda \rangle = 0$.

  Since $E_0^0 = E_0^1$, we have the desired result.
\end{proof}

\subsection{The phRPA correlation energy}

The correlation energy is then defined by substracting the Hartree and exchange energy of the noninteracting system 
\begin{equation}
E_c = \int_0^1 \tr_{2A} \big((\rho^{(2)}_\lambda - \rho^{(2)}_0) V_\mathrm{ee} \big) \, \mathrm{d}\lambda. \label{eq:adiabatic_connection_cor}
\end{equation}
The key to deriving the phRPA correlation energy is to write the correlation energy in terms of $\tilde{\chi}^{(\lambda)}$, the retarded linear response operator derived from ${H}_\lambda$ and then find an appropriate approximation for $\tilde{\chi}^{(\lambda)}$. 

\begin{proposition}
  \label{prop:chi_to_pair_density}
  Let $B$ be the operator defined in Equation~\eqref{eq:B_operator}.
  The operator $(v_c^{1/2})^* B^*B v_c^{1/2} : L^2(\RR^3) \to L^2(\RR^3)$ is trace-class and  
  \begin{equation}
    \label{eq:traceBstarB}
    \tr\Big((v_c^{1/2})^*B^*Bv_c^{1/2}\Big) = 2 \int \frac{\rho^{(2)}(r,r')}{|r-r'|} \, \mathrm{d}r\, \mathrm{d}r',
  \end{equation}
  where $\rho^{(2)}(r,r')$ is the pair density of $\Psi_0$:
  \begin{equation*}
    \rho^{(2)}(r_1,r_2) = \binom{N}{2} \sum\limits_{s_1,\dots,s_N \in \Z_2} \int |\Psi_0(r_1s_1,r_2s_2,\dots,r_Ns_N)|^2 \, \mathrm{d}r_3 \dots  \mathrm{d}r_N.
  \end{equation*}
\end{proposition}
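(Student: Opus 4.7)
The plan is to use $(v_c^{1/2})^* B^*B v_c^{1/2} = T^*T$ with $T := B v_c^{1/2} \colon L^2(\RR^3) \to \bigwedge_{i=1}^N L^2(\RR^3\times\Z_2)$; since this is a positive operator, being trace-class reduces to $T$ being Hilbert--Schmidt, in which case the trace equals $\|T\|_{HS}^2$. The whole computation will then come down to a closed-form formula for $\|Bv\|^2$.

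First I would derive the formula for $\|Bv\|^2$, $v \in L^6(\RR^3)$. Writing $\|Bv\|^2 = \langle\Psi_0, (\sum_i v(r_i))^2 \Psi_0\rangle - \langle\rho, v\rangle^2$ via $\langle\Psi_0, \sum_i v(r_i)\Psi_0\rangle = \langle\rho, v\rangle$, then splitting $(\sum_i v(r_i))^2$ into its $i=j$ and $i\ne j$ contributions, exploiting the symmetry of $|\Psi_0|^2$, and invoking the paper's normalization $\rho^{(2)}(r_1, r_2) = \binom{N}{2}\sum_s \int|\Psi_0|^2\, dr_3\cdots dr_N$, I obtain
\[
\|Bv\|^2 = \int \rho\, v^2\, dr + 2\!\int\!\!\int \rho^{(2)}(r, r')\, v(r)\, v(r')\, dr\, dr' - \langle\rho, v\rangle^2,
\]
each term being finite for $v \in L^6$ by H\"older's inequality and Proposition~\ref{prop:N_body_Hamiltonian}. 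Next I would write down the Hilbert--Schmidt kernel of $T$ explicitly,
\[
T(x, r) = \Bigl[\sum_{i=1}^N v_c^{1/2}(r_i, r) - (v_c^{1/2}\rho)(r)\Bigr]\Psi_0(x),
\]
so that by Fubini $\|T\|_{HS}^2 = \int dr\, \|B v_r\|^2$ with $v_r(\cdot) := v_c^{1/2}(\cdot, r)$. Substituting the formula above and using $\int v_c^{1/2}(r_1, r)\, v_c^{1/2}(r_2, r)\, dr = v_c(r_1, r_2) = 1/|r_1-r_2|$, the pair-density contribution becomes exactly $2\int \rho^{(2)}(r_1, r_2)/|r_1-r_2|\, dr_1\, dr_2$, matching the claimed trace.

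The main obstacle lies in the remaining two pieces: the one coming from $\int \rho\, v_r^2$ formally equals $\int \rho(r')\, v_c(r', r')\, dr' = +\infty$ because of the on-diagonal singularity of $v_c$, while the one from $\langle\rho, v_r\rangle^2$ integrates to the finite quantity $\langle\rho, v_c\rho\rangle$, and they do not cancel pointwise in $r$. To make the whole computation rigorous I would first replace $v_c$ by a regularized Coulomb potential $v_c^\varepsilon$ (e.g.\ a Yukawa potential $e^{-\varepsilon|r-r'|}/|r-r'|$), for which $T^\varepsilon$ is manifestly Hilbert--Schmidt with trace computed in closed form, and then pass $\varepsilon\to 0$ by producing a uniform-in-$\varepsilon$ $L^1(\RR^3, dr)$ dominating function for $\|B v_r^\varepsilon\|^2$ obtained by regrouping the three pieces into a single manifestly nonnegative integrand \emph{before} integrating in $r$. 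Justifying this uniform domination is the technical crux of the proof.
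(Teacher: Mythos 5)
Your reduction to $T=Bv_c^{1/2}$ and your variance formula $\|Bv\|^2=\int\rho\,v^2+2\int\rho^{(2)}(r,r')v(r)v(r')\,\mathrm{d}r\,\mathrm{d}r'-\braket{\rho}{v}^2$ are correct, and the cross term does reproduce the right-hand side of \eqref{eq:traceBstarB}. But the step you defer --- the uniform-in-$\varepsilon$ domination --- is not a technicality that can be supplied later; it cannot exist. For $v\ge 0$ your own formula gives $\|Bv\|^2\ge\int\rho\,v^2-\braket{\rho}{v}^2$ (since $\rho^{(2)}\ge 0$), so the quantity you must control satisfies $\int\mathrm{d}r\,\|Bv^\varepsilon_r\|^2\ \ge\ \int\mathrm{d}r\int\rho\,(v^\varepsilon_r)^2-\int\mathrm{d}r\,\braket{\rho}{v^\varepsilon_r}^2$, where the second integral stays bounded (it converges to $\braket{\rho}{v_c\rho}$) while the first tends to $\tfrac{1}{4\pi}\int\rho(a)\,|a-r|^{-4}\,\mathrm{d}r\,\mathrm{d}a=+\infty$. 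This is a short-distance divergence, so incidentally a Yukawa cutoff does not even make $T^\varepsilon$ Hilbert--Schmidt (it only tames large distances); with a genuine UV regularization $T^\varepsilon$ is Hilbert--Schmidt but its norm blows up as the cutoff is removed. Since the integrand is nonnegative, no regrouping can produce an $L^1$ majorant: carried out honestly, your route shows the trace diverges rather than converges, so it cannot establish the statement as written.

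It is worth comparing with the paper's proof, which is structured differently: it computes $B^*Bf(r_1)$ directly from \eqref{eq:Badjoint}, asserts $B^*Bf(r_1)=2\int\rho^{(2)}(r_1,r_2)f(r_2)\,\mathrm{d}r_2$, and then integrates the diagonal of the resulting positive kernel. In passing from $N\sum_{s}\int\big(\sum_i f(r_i)-\braket{\rho}{f}\big)|\Psi_0|^2$ to $N(N-1)\sum_{s}\int f(r_2)|\Psi_0|^2$, it discards exactly the two contributions your expansion retains, namely $\rho(r_1)f(r_1)$ (the $i=1$ term) and $-\rho(r_1)\braket{\rho}{f}$; after sandwiching with $v_c^{1/2}$ these are precisely your divergent one-body piece and the finite $-\braket{\rho}{v_c\rho}$. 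So the obstruction you hit is real and is located at that step of the paper's argument: the identity is only safe for the fixed-density difference $B_\lambda^*B_\lambda-B_0^*B_0$, where these terms cancel, which is what is actually used in \eqref{eq:Ec_chi_lambda}. In short, your proposal has a gap that cannot be closed along the proposed lines, but the computation that exposes it is sound and should be raised with the authors.
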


\begin{proof}
  By definition of $B$, for $f \in L^2(\RR^3)$ we have
  \begin{align*}
    B^*Bf(r_1) &= N \sum\limits_{s_1,\dots,s_N \in \Z_2} \int_{\RR^{3(N-1)}} (Bf)(x_1,\bar{x}_1) \Psi_0(x_1,\bar{x}_1) \, \mathrm{d}r_2\dots \mathrm{d}r_N - \braket{\Psi_0}{Bf} \rho_0(r) \\
    & = N \sum\limits_{s_1,\dots,s_N \in \Z_2} \int_{\RR^{3(N-1)}} \Big( \sum\limits_{i=1}^N f(r_i) - \braket{\rho_0}{f} \Big) |\Psi_0(x_1,\bar{x}_1)|^2 \, \mathrm{d}r_2\dots \mathrm{d}r_N \\
    & = N(N-1) \sum\limits_{s_1,\dots,s_N \in \Z_2} \int_{\RR^{3(N-1)}} f(r_2) |\Psi_0(x_1,\bar{x}_1)|^2 \, \mathrm{d}r_2\dots \mathrm{d}r_N \\
    & = 2 \int_{\RR^3} f(r_2) \rho^{(2)}(r_1,r_2) \, \mathrm{d}r_2.
  \end{align*}
  Hence, $(v_c^{1/2})^*B^*Bv_c^{1/2}$ is the operator with kernel $K(r_1,r_2)= \frac{1}{2\pi}  \int \frac{\rho^{(2)}(r,r')}{|r_2-r'|^2 |r_1-r|^2} \, \mathrm{d}r\, \mathrm{d}r'$ which is positive with trace 
  \begin{align}
    \int K(r_1,r_1) \, \mathrm{d}r_1 &= \frac{1}{2\pi}\int \frac{\rho^{(2)}(r,r')}{|r_1-r'|^2 |r_1-r|^2} \, \mathrm{d}r\, \mathrm{d}r' \, \mathrm{d}r_1\\
    &= 2 \int \frac{\rho^{(2)}(r,r')}{|r-r'|} \, \mathrm{d}r\, \mathrm{d}r',
  \end{align}
  where we used that 
  \begin{equation}
    \frac{1}{4\pi}\int_{\RR^3} \frac{1}{|r_1-r'|^2|r_1-r|^2} \, \mathrm{d}r_1 = \frac{1}{4\pi}\frac{1}{|\cdot|^2} \star \frac{1}{|\cdot|^2}(r-r') = \frac{1}{|r-r'|}.
  \end{equation}
\end{proof}

Starting from \eqref{eq:adiabatic_connection_cor}, we can write $E_c$ by using \eqref{eq:traceBstarB}, \eqref{eq:remarkable_identity},
\begin{align*}
  E_c &= \int_0^1 \tr_{2A} \big((\rho^{(2)}_\lambda - \rho^{(2)}_0) V_\mathrm{ee} \big) \, \mathrm{d}\lambda \\
  &= \frac{1}{2} \int_0^1 \tr \big( (v_c^{1/2})^*(B^*_\lambda B_\lambda - B^*_0B_0)v_c^{1/2} \big)\, \mathrm{d}\lambda \\
  &= -\frac{1}{4\pi} \int_\RR \int_0^1 \tr \big( (v_c^{1/2})^*(\tilde{\chi}^{(\lambda)}(\ii \omega) - \tilde{\chi}_\mathrm{KS}(\ii \omega))v_c^{1/2} \big)\, \mathrm{d}\lambda, \numbereq \label{eq:Ec_chi_lambda}
\end{align*}
where $\chi_\mathrm{KS} = \chi^{(0)}$ is the retarded linear response operator of the noninteracting operator ${H}_N(v_\mathrm{KS})$, where $v_\mathrm{KS}$ is the exact Kohn-Sham potential. 
In order to compute \eqref{eq:Ec_chi_lambda}, we would need to know $\tilde{\chi}^{(\lambda)}$ for all $0 \le \lambda \le 1$.  However, finding $\tilde{\chi}^{(\lambda)}$ is essentially as hard as solving the original problem.  Therefore, an approximation is needed, coming in the form of a Dyson equation,
\begin{equation}
  \label{eq:phrpa_dyson_equation}
  \tilde{\chi}^{(\lambda)}(z) = \tilde{\chi}_\mathrm{KS}(z) + \lambda \tilde{\chi}_\mathrm{KS}(z){v}_{c}\tilde{\chi}^{(\lambda)}(z),
\end{equation}
where $v_c$ is the Hartree operator with kernel $\frac{1}{|x-y|}$.  The error in this approximation can be seen more clearly from the point of view of time-dependent density functional theory (TDDFT).  In TDDFT, the exact response function takes the form \cite[Sec.~3.6]{lin2019mathematical}
\begin{equation}
\tilde{\chi}^{(\lambda)}_\text{exact}(z) = \tilde{\chi}_{\mathrm{KS}}(z) - \tilde{\chi}_{\mathrm{KS}}(z)\big(\lambda v_c + f_{xc,\lambda}\big)\tilde{\chi}^{(\lambda)}_\text{exact}(z),
\end{equation}
where $f_{xc,\lambda}$ is the exchange-correlation kernel when the interaction strength is $\lambda$.  By comparing with \eqref{eq:phrpa_dyson_equation}, we see that phRPA is obtained by neglecting the $f_{xc,\lambda}$ term.


The formula for the phRPA correlation energy is finally derived by using the Dyson equation \eqref{eq:phrpa_dyson_equation} and then integrating in $\lambda$,
\begin{align}
  E_c &= -\frac{1}{4\pi}\int_\RR \int_0^1 \tr \big( (\id-\lambda (v_c^{1/2})^*\tilde{\chi}_\mathrm{KS}(\ii \omega)v_c^{1/2})^{-1} (v_c^{1/2})^*\tilde{\chi}_\mathrm{KS}(\ii \omega)v_c^{1/2} - (v_c^{1/2})^*\tilde{\chi}_\mathrm{KS}(\ii \omega)v_c^{1/2} \big)\, \mathrm{d}\lambda \, \mathrm{d} \omega\\
  & = \frac{1}{4\pi}\int_\RR \tr \big( \log(\id- (v_c^{1/2})^*\tilde{\chi}_\mathrm{KS}(\ii \omega)v_c^{1/2}) + (v_c^{1/2})^*\tilde{\chi}_\mathrm{KS}(\ii \omega)v_c^{1/2} \big)\, \mathrm{d} \omega.
\end{align}

\begin{remark}[Other flavors of RPA]
While we have chosen to rewrite $E_c$ in terms of $\chi$, we note that this is not the only choice.  
If we had instead rewritten $E_c$ in terms of the particle-particle Green's function, then we could use a Dyson equation on the particle-particle Green's function, that is similar to \eqref{eq:phrpa_dyson_equation}, to obtain the so-called particle-particle RPA \cite{van2013exchange,van2013exchange}.
\end{remark}

In practice, the exact Kohn-Sham potential is unknown and can only be approximated by some potential $v_0$. 
Hence an additional approximation is introduced by substituing $\chi_\mathrm{KS}$ by the noninteracting retarded linear response operator $\chi_0$ derived from $h=-\tfrac{1}{2} \Delta +v_0$. 
The phRPA correlation energy is thus 
\begin{equation}
  E_c^\mathrm{phRPA}(h) = \frac{1}{4\pi} \int_\mathbb{R} \mathrm{tr}\big( \log(\mathrm{id}-(v_c^{1/2})^*\widetilde{\chi}_0(\ii\omega)v_c^{1/2}) + (v_c^{1/2})^*\widetilde{\chi}_0(\ii \omega)v_c^{1/2} \big) \, \mathrm{d}\omega. \label{eq:defEcRPA}
\end{equation}

The total phRPA energy is then given by 
\begin{multline}
  \label{eq:def_total_phrpa_energy}
  \mathcal{E}^\mathrm{phRPA}(h) = 2\sum_{i=1}^n \langle \phi_i, (-\tfrac{1}{2}\Delta + v_\mathrm{ext}) \phi_i \rangle + \frac{1}{2} \int \frac{\rho(x)\rho(y)}{|x-y|} \, \mathrm{d}x\, \mathrm{d}y \\
  - \int \frac{\big| \sum_{i=1}^n \phi_i(x)\phi_i(y) \big|^2}{|x-y|}\, \mathrm{d}x\, \mathrm{d}y + E_c^{\mathrm{phRPA}}(h),
\end{multline}
where $\rho(x) = 2\sum_{i=1}^n |\phi_i(x)|^2$ and $(\phi_i)_{1\leq i \leq n}$ are the eigenfunctions associated to the $n$ lowest eigenvalues of the operator $h=-\tfrac{1}{2} \Delta + v_0$ where $v_0 = v_\mathrm{ext} + \rho \star \tfrac{1}{|\cdot|} + v_\mathrm{xc}(\rho)$.

\begin{remark}[Time-ordered linear response operator]
  It is also possible to derive the phRPA correlation energy using the time-ordered linear response operator $\chi^T$. This operator is formally defined by 
  \[
    \langle a,\chi^T(t-s)b \rangle = -\ii \langle \Psi_0, \mathcal{T}\lbrace \mathfrak{a}(t),\mathfrak{b}(s) \rbrace \Psi_0 \rangle,
  \]
  where
  \[
    \begin{aligned}
     \mathfrak{a}(t) &= e^{it(H_N(v_\mathrm{ext},w)-E_0)} \Big( \sum_{i=1}^N a(r_i) \Big) e^{-it (H_N(v_\mathrm{ext},w)-E_0)} \\
    \mathfrak{b}(s) & = e^{is(H_N(v_\mathrm{ext},w)-E_0)} \Big( \sum_{i=1}^N b(r_i) \Big) e^{-is (H_N(v_\mathrm{ext},w)-E_0)} 
    \end{aligned}
  \] and $\mathcal{T}$ is the time ordering operator 
  \[
    \mathcal{T}\lbrace A_1(t),A_2(t') \rbrace = \left\lbrace\begin{aligned}
      & A_1(t) A_2(t') , && t>t' \\
      & A_2(t') A_1(t) , && t'>t.
    \end{aligned}\right.
  \]
  Like the retarded linear response function, $\chi^T$ only depends on the time lag $\tau = t-t'$. This expression can be simplified to
  \[
    \chi^T(\tau) = -\ii B^* e^{-\ii (H_N - E_0^N)|\tau|}B.  
  \]
  Splitting $\chi^T$ as 
  \[
    \chi^T(\tau) = -\ii \theta(\tau) B^* e^{-\ii (H_N - E_0^N)\tau}B  -\ii \theta(-\tau) B^* e^{\ii (H_N - E_0^N)\tau}B  ,
  \]
  we can again take the Laplace transform\footnote{for $\theta(-\tau) e^{\ii (H_N - E_0^N)\tau}$, the Laplace transform is originally defined on the lower complex plane $\mathbb{L} = \lbrace z \in \CC \ | \ \mathrm{Im}(z) < 0 \rbrace$ and then extended to $z \notin \sigma(E_0^N-H_N)$.} 
and get that for $z \notin \sigma(H_N^\sharp - E_0^N) \cup \sigma(E_0^N-H_N^\sharp)$,
  \[
    \widetilde{\chi^T}(z) = - B^*(H^\sharp_N -E_0^N - z)^{-1} B-B^*(H^\sharp_N -E_0^N + z)^{-1} B.  
  \]
\end{remark}

\begin{remark}[Self-consistent RPA]
  It is also possible to minimize the total phRPA energy with respect to the input potential $v_0$ \cite{nguyen2014scphrpa,hellgren2012scphrpa}. In practice, this minimization problem is solved using a fixed-point iteration. The question of the well-posedness and the convergence is a challenging mathematical problem.  
\end{remark}

\subsection{Well-posedness of the phRPA correlation formula}
\label{subsec:wellposed}

As the culmination of this section, we prove that, despite the approximation and the assumptions used above, the formula we have just derived is well defined.  The only assumption we need is the existence of an energy gap in the one-electron model.  Without a gap, the system can have a large reaction to an arbitrarily small change in the external potential, in which case, $\chi_0$ is not well defined \cite{weinan2013kohn}.

\begin{theorem}
  \label{thm:phrpa_well-posedness}
  Let $h=-\tfrac{1}{2}\Delta + v_0$ which satisfies the spectral gap Assumption~\ref{assumption:energy_gap}.
  Then the phRPA correlation energy
  \begin{equation}
    \label{eq:phrpa_correlation_energy}
    E_c^\mathrm{phRPA}(h) = \frac{1}{4\pi}\int_\RR \tr  \big( \log(\id- (v_c^{1/2})^* \tilde{\chi}_0(\ii \omega)v_c^{1/2} ) + (v_c^{1/2})^* \tilde{\chi}_0(\ii \omega) v_c^{1/2} \big)\, \mathrm{d} \omega.
  \end{equation}
  is finite.
\end{theorem}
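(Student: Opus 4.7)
The plan is to reduce the integrand to a quantity pointwise controlled by the Hilbert--Schmidt norm $\|K(\omega)\|_{\mathfrak{S}_2}^2$ with $K(\omega) := (v_c^{1/2})^* \tilde{\chi}_0(\ii\omega) v_c^{1/2}$, and then to show that this norm is integrable on $\mathbb{R}$.

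First I would verify that $K(\omega)$ is a bounded, negative semi-definite operator on $L^2(\mathbb{R}^3)$ for every $\omega \in \mathbb{R}$. Boundedness follows by composing the Hardy--Littlewood--Sobolev bound $v_c^{1/2}:L^2\to L^6$ with the $L^6\to L^{6/5}$ continuity of $\tilde{\chi}_0$ established just after Equation~\eqref{eq:chi_rigorous}. For negativity, the Lehmann-type representation from the remark after Proposition~\ref{prop:chi_zero} can be rewritten as $\tilde{\chi}_0(\ii\omega)=-4\,V^*D(\omega)V$, where $V:L^2(\mathbb{R}^3)\to\ell^2$ sends $f\mapsto(\langle \phi_j\phi_k,f\rangle)_{k\le n,\,j>n}$ (with the obvious extension by the spectral measure of $h$ on the continuous spectrum) and $D(\omega)$ is diagonal with entries $\alpha_{jk}(\omega):=(\epsilon_j-\epsilon_k)/[(\epsilon_j-\epsilon_k)^2+\omega^2]$. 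Assumption~\ref{assumption:energy_gap} forces $\epsilon_j-\epsilon_k\ge g:=\epsilon_{n+1}-\epsilon_n>0$, so $D(\omega)\ge 0$ and therefore $K(\omega)=-4(D^{1/2}Vv_c^{1/2})^*(D^{1/2}Vv_c^{1/2})\le 0$.

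Since $K(\omega)\le 0$, the spectrum of $\id - K(\omega)$ lies in $[1,\infty)$ and $\log(\id - K(\omega))$ is well-defined by the Borel functional calculus. Setting $A:=-K(\omega)\ge 0$ and applying the scalar inequality $0\le y-\log(1+y)\le y^2/2$ for $y\ge 0$, functional calculus yields the operator bound $0\le A-\log(\id+A)\le A^2/2$; taking traces gives
\begin{equation*}
\bigl|\tr\bigl(\log(\id-K(\omega))+K(\omega)\bigr)\bigr|\;\le\;\tfrac{1}{2}\,\tr\bigl(K(\omega)^2\bigr)\;=\;\tfrac{1}{2}\,\|K(\omega)\|_{\mathfrak{S}_2}^{2}.
\end{equation*}
It is therefore enough to prove that $\omega\mapsto\|K(\omega)\|_{\mathfrak{S}_2}^2$ is integrable on $\mathbb{R}$.

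For the Hilbert--Schmidt estimate I would use $\tr(K(\omega)^2)=16\,\|D(\omega)^{1/2}MD(\omega)^{1/2}\|_{\mathfrak{S}_2}^2$ with $M:=Vv_cV^*$ whose matrix entries are the two-electron integrals $\langle\phi_j\phi_k,v_c\phi_{j'}\phi_{k'}\rangle$. Integrating in $\omega$ and using the elementary identity $\int_{\mathbb{R}}\alpha_{jk}(\omega)\alpha_{j'k'}(\omega)\,\mathrm{d}\omega=\pi/(\Delta_{jk}+\Delta_{j'k'})$ together with the gap lower bound $\Delta_{jk},\Delta_{j'k'}\ge g$ reduces the integrability claim to $\|M\|_{\mathfrak{S}_2}^2=\|v_c^{1/2}(V^*V)v_c^{1/2}\|_{\mathfrak{S}_2}^2<\infty$. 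The kernel of $V^*V$ is $\rho_s(z_1)\delta(z_1-z_2)-\gamma_s(z_1,z_2)^2$, with $\gamma_s,\rho_s$ the spin-summed occupied density matrix and density, and the resulting sandwich kernel is square-integrable thanks to the regularity and exponential decay of the orbitals from Proposition~\ref{prop:N_body_Hamiltonian}. The hard part is precisely this last Hilbert--Schmidt estimate: naive trace-norm bounds fail because $\sum_j\|v_c^{1/2}(\phi_j\phi_k)\|_{L^2}^2$ formally collapses via closure to $\int \phi_k(r)^2/|r-r|\,\mathrm{d}r=\infty$, so the sum over unoccupied states conspires with the Coulomb singularity at coincident points. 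The $\mathfrak{S}_2$ bound is rescued by the symmetric factor $1/(\Delta_{jk}+\Delta_{j'k'})$ produced by the $\omega$-integration, which together with the gap restores summability.
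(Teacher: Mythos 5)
Your proposal is correct, and its first half is essentially the paper's argument: the pointwise reduction $0\le -\tr\big(\log(\id-K(\omega))+K(\omega)\big)\le \tfrac12\|K(\omega)\|_{\mathrm{HS}}^2$ for the nonpositive self-adjoint operator $K(\omega)=(v_c^{1/2})^*\tilde\chi_0(\ii\omega)v_c^{1/2}$ is exactly Lemma~\ref{lem:trace_of_log_vs_hilbert-schmidt} with $a=\tfrac12$. Where you genuinely diverge is the integrability step. The paper proves a pointwise-in-$\omega$ bound $\|K(\omega)\|_{\mathrm{HS}}\le C/(1+|\omega|)$ (Lemma~\ref{lem:symmetrized_chi_hilbert_schmidt}): it factors $K(\omega)$ as in Lemma~\ref{lem:BAB_Hilbert-schmidt}, bounds the operator norm of the middle resolvent-type factor by $\min\big(C_g,\tfrac{1}{2|\omega|}\big)$ using the gap and $\sup_\lambda \lambda/(\lambda^2+\omega^2)=1/(2|\omega|)$, and controls $\|\phi_j v_c\phi_k\|_{\mathrm{HS}}$ by Hardy; integrability of $(1+|\omega|)^{-2}$ then finishes. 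You instead integrate in $\omega$ first, using the Lehmann-type representation \eqref{eq:chem_chi_0}, the identity $\int_\RR \tfrac{a}{a^2+\omega^2}\tfrac{b}{b^2+\omega^2}\,\dd\omega=\tfrac{\pi}{a+b}$ and the gap from Assumption~\ref{assumption:energy_gap}, reducing everything to the single $\omega$-independent statement that $v_c^{1/2}\big(\gamma_s\delta-\gamma_s^2\big)v_c^{1/2}$ is Hilbert--Schmidt (your $\|M\|_{\mathrm{HS}}<\infty$), which again follows from Hardy-type estimates; your observation that this operator is Hilbert--Schmidt but not trace class, so the symmetric $\omega$-weight is what saves the sum, is a nice structural point the paper does not make explicit. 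What your route buys is an explicit, sharp-looking constant $\pi/(2g)\,\|M\|_{\mathrm{HS}}^2$; what the paper's route buys is that it never leaves the functional-calculus formulation \eqref{eq:chi0}/\eqref{eq:chi0_other_definition}, which matters here because $h$ has continuous spectrum and the paper deliberately avoids assuming an eigenbasis.

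Two points you should tighten. First, your Lehmann-based factorization $\tilde\chi_0=-4V^*D(\omega)V$ with pairs $(k\le n,\,j>n)$ is only formal for this $h$; to make it rigorous, replace the index $j$ by the spectral projection $Q$ onto the orthogonal complement of the occupied space (on which $h-\epsilon_k\ge g$ by the gap), note that the occupied--occupied contributions in \eqref{eq:chi0} cancel pairwise by antisymmetry of $(\epsilon_j-\epsilon_k)/((\epsilon_j-\epsilon_k)^2+\omega^2)$, and either carry out your pairwise $\omega$-integral with the joint spectral measure or simply bound the operator norm of $D(\omega)$, which essentially collapses your argument back onto the paper's Lemma~\ref{lem:symmetrized_chi_hilbert_schmidt}; nonpositivity of $K(\omega)$ is cleanest directly from the many-body form $\tilde\chi_0(\ii\omega)=-2B_0^*(H_N(v_0)-E_0^N)[(H_N(v_0)-E_0^N)^2+\omega^2]^{-1}B_0$. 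Second, the regularity you invoke should not be cited from Proposition~\ref{prop:N_body_Hamiltonian} (that concerns the interacting $N$-body ground state); all you need is $\phi_k\in H^1(\RR^3)$ for the occupied eigenfunctions of $h$, which suffices for the Hardy estimates, and no exponential decay is required.
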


The main idea of the proof consists in using that the inequality $\log(1-x)+x \gtrsim x^2$ for $x>-1$ can be extended to operators and that $(v_c^{1/2})^* \tilde{\chi}_0(\ii \omega)v_c^{1/2}$ is Hilbert-Schmidt. 
Before proving the above theorem, we show a couple of lemmas. 

\begin{lemma}
  \label{lem:trace_of_log_vs_hilbert-schmidt}
  Let $A$ be a Hilbert-Schmidt self-adjoint operator on $L^2(\RR^3)$, such that there is $\alpha <1$ such that for all $f \in L^2(\RR^3)$, we have $\braket{f}{Af} \leq \alpha \|f\|^2$. Then there exists a constant $a>0$ depending on $\alpha$ such that we have 
  \begin{equation}
    -a \|A\|^2_\mathrm{HS}\leq \tr \big( \log(1-A)+A \big) \leq 0.
  \end{equation}
  If $A$ is a nonpositive operator, we can choose $a = \tfrac{1}{2}$.
\end{lemma}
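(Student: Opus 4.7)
The approach is to diagonalize $A$ via the spectral theorem and reduce the operator trace to a sum over eigenvalues, then to establish a pointwise scalar inequality of the form $|\log(1-x)+x|\le a\,x^2$ valid on $(-\infty,\alpha]$.

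Since $A$ is Hilbert--Schmidt it is compact; together with self-adjointness this gives an orthonormal basis $(\phi_i)$ of eigenvectors with real eigenvalues $(\lambda_i)$ satisfying $\sum_i \lambda_i^2 = \|A\|_{\mathrm{HS}}^2<\infty$. The assumption $\langle f,Af\rangle\le\alpha\|f\|^2$ forces $\lambda_i\le\alpha<1$ for every $i$, so $\mathrm{id}-A$ is bounded below by $1-\alpha>0$ in the operator sense, and $g(A):=\log(\mathrm{id}-A)+A$ is well defined through the continuous functional calculus, with $g(A)=\sum_i g(\lambda_i)\,|\phi_i\rangle\langle\phi_i|$ for $g(x):=\log(1-x)+x$.

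Next I would establish the scalar inequality. The function $g$ satisfies $g(0)=g'(0)=0$ and $g''(x)=-1/(1-x)^2<0$, so it is concave on $(-\infty,1)$ with maximum value $0$; hence $g(\lambda_i)\le 0$ for each $i$, which already gives the upper bound $\tr(g(A))\le 0$. For the lower bound I split according to the sign of $x$. If $x\le 0$, setting $y=-x\ge 0$ and $h(y)=\log(1+y)-y+y^2/2$, the identity $h'(y)=y^2/(1+y)\ge 0$ with $h(0)=0$ yields $|g(x)|\le x^2/2$. If $0<x\le\alpha$, Taylor's formula with integral remainder gives
\[
g(x)=-\int_0^x\frac{x-t}{(1-t)^2}\,dt, \qquad |g(x)|\le\frac{x^2}{2(1-\alpha)^2}.
\]
Setting $a:=\max\!\bigl(1/2,\,1/(2(1-\alpha)^2)\bigr)$ then yields $|g(x)|\le a\,x^2$ on $(-\infty,\alpha]$; if $A$ is nonpositive the second case does not arise and the constant $a=1/2$ suffices.

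Combining the two ingredients, absolute summability $\sum_i|g(\lambda_i)|\le a\sum_i\lambda_i^2=a\,\|A\|_{\mathrm{HS}}^2$ shows that $g(A)$ is trace class with $\tr(g(A))=\sum_i g(\lambda_i)$, and the scalar bounds translate directly into $-a\,\|A\|_{\mathrm{HS}}^2\le\tr(g(A))\le 0$. No serious obstacle is anticipated; the only mild subtlety is justifying that the spectral sum really equals $\tr(g(A))$, but this is immediate since $(\phi_i)$ is an orthonormal basis of eigenvectors of both $A$ and $g(A)$ and the sum is absolutely convergent.
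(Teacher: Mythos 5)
Your proof is correct and follows essentially the same route as the paper: reduce to the spectral decomposition of the self-adjoint Hilbert--Schmidt operator and apply the scalar bound $-a\lambda^2 \le \log(1-\lambda)+\lambda \le 0$ on $(-\infty,\alpha]$, summing to get the trace estimate. The only difference is cosmetic — you work with the eigenbasis and make the constant $a$ explicit (including $a=\tfrac{1}{2}$ in the nonpositive case), whereas the paper argues via the projection-valued measure and merely asserts the existence of $a$.
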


\begin{proof}
  Let $\phi \in L^2(\RR^3)$ and $P_\lambda^A$ be the projector-valued measure of $A$.
  By assumption on $A$, the support of $P_\lambda$ lies in $(-\infty,\alpha)$.
  Thus we have
  \begin{equation}
    \braket{\phi}{(\log(\id -A)+A)\phi} = \int_{-\infty}^\alpha \log(1-\lambda) + \lambda \, \mathrm{d}P^A_{\lambda,\phi}.
  \end{equation}
  There is a constant $a>0$ depending only on $\alpha$ such that $-a\lambda^2 \leq \log(1-\lambda)+\lambda \leq 0$, hence 
  \begin{equation}
    -a \|A\phi\|^2 \leq \braket{\phi}{(\log(\id -A)+A)\phi} \leq 0.
  \end{equation}
  Hence the trace of $\log(\id -A)+A$ is finite and bounded from below by $-a\|A\|_{\mathrm{HS}}^2$.
\end{proof}

\begin{lemma}
  \label{lem:BAB_Hilbert-schmidt}
  Let $A$ be a bounded self-adjoint operator on $L^2(\RR^3)$ and $B$ an operator on $L^2(\RR^3)$ such that $BB^*$ is Hilbert-Schmidt. 
  Then $B^* A B$ is a Hilbert-Schmidt operator with Hilbert-Schmidt norm
  \begin{equation}
    \|B^* A B\|_\mathrm{HS} \leq \|A\| \|BB^*\|_\mathrm{HS}.
  \end{equation}
\end{lemma}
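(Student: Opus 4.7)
I would reduce everything to the single auxiliary operator $C := BB^*$, which by hypothesis is self-adjoint and Hilbert-Schmidt on $L^2(\RR^3)$. The strategy is a direct computation of $\|B^*AB\|_{\mathrm{HS}}^2$ via the trace identity $\|T\|_{\mathrm{HS}}^2 = \tr(TT^*)$, followed by a Cauchy-Schwarz bound in the Hilbert-Schmidt inner product, using that $\mathcal{B}_2(L^2(\RR^3))$ is a two-sided ideal inside $\mathcal{B}(L^2(\RR^3))$.

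Concretely, the steps would be as follows. First, write
\begin{equation*}
\|B^*AB\|_{\mathrm{HS}}^2 = \tr\bigl((B^*AB)(B^*AB)^*\bigr) = \tr(B^*A\,BB^*\,A^*B).
\end{equation*}
Next, invoke cyclicity of the trace (which applies since $BB^*$ is Hilbert-Schmidt and $A$ is bounded, so $BB^*A$ and $BB^*A^*$ are Hilbert-Schmidt and their product is trace class) to rewrite this as
\begin{equation*}
\tr(B^*A\,BB^*\,A^*B) = \tr\bigl((BB^*)\,A\,(BB^*)\,A^*\bigr) = \tr(CACA^*).
\end{equation*}
Then apply the Hilbert-Schmidt Cauchy-Schwarz inequality $|\tr(XY^*)| \leq \|X\|_{\mathrm{HS}} \|Y\|_{\mathrm{HS}}$ with $X = CA$ and $Y = AC$ (so that $Y^* = CA^*$, using $C = C^*$), obtaining
\begin{equation*}
|\tr(CACA^*)| \leq \|CA\|_{\mathrm{HS}} \, \|AC\|_{\mathrm{HS}}.
\end{equation*}
Finally, use the ideal estimates $\|CA\|_{\mathrm{HS}} \leq \|C\|_{\mathrm{HS}}\|A\|$ and $\|AC\|_{\mathrm{HS}} \leq \|A\|\,\|C\|_{\mathrm{HS}}$ to conclude
\begin{equation*}
\|B^*AB\|_{\mathrm{HS}}^2 \leq \|A\|^2 \, \|BB^*\|_{\mathrm{HS}}^2,
\end{equation*}
and take square roots.

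\textbf{Main obstacle.} Nothing here is deep; the only thing worth watching is the justification of trace cyclicity, since $B$ itself is \emph{not} assumed Hilbert-Schmidt (only $BB^*$ is). One must argue carefully that each intermediate operator whose trace is taken lies in the trace class, which is why I prefer to push the two copies of $B$ together immediately so that everything reduces to products involving $C = BB^*$, where the ideal properties of $\mathcal{B}_1$ and $\mathcal{B}_2$ give the needed justification without appeal to any property of $B$ alone.
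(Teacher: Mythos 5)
Your proof is correct and follows essentially the same route as the paper: write $\|B^*AB\|_{\mathrm{HS}}^2$ as a trace, use cyclicity to group the two copies of $B$ into $BB^*$, then apply the Hilbert--Schmidt Cauchy--Schwarz inequality together with the ideal bound $\|BB^*A\|_{\mathrm{HS}} \leq \|A\|\,\|BB^*\|_{\mathrm{HS}}$. Your explicit attention to justifying the cyclicity step (which the paper passes over silently) is a welcome refinement, but not a different argument.
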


\begin{proof}
  Let $(f_i)_{i \in \N}$ be an orthonormal basis of $L^2(\RR^3)$. By definition we have
  \begin{align*}
    \|B^*AB\|_\mathrm{HS}^2 &= \sum\limits_{i \in \N} \braket{f_i}{B^* ABB^*ABf_i}  
    = \tr \big(B^* ABB^*AB \big) 
    = \tr \big( ABB^*ABB^* \big) \\
    & = \sum\limits_{i \in \N} \braket{f_i}{ABB^*ABB^* f_i} 
    \le \sum_{i \in \NN} \norm{BB^*Af_i} \norm{ABB^*f_i} \\
    &\le \norm{BB^*A}_\textnormal{HS} \norm{ABB^*}_\textnormal{HS}
    \le \norm{A}^2 \norm{BB^*}_\textnormal{HS}^2.
  \end{align*}
\end{proof}

\begin{lemma}
  \label{lem:symmetrized_chi_hilbert_schmidt}
  The symmetrized operator $(v_c^{1/2})^* \tilde{\chi}_0(\ii \omega) v_c^{1/2}$ is Hilbert-Schmidt with a norm bounded by 
  \begin{equation}
    \|(v_c^{1/2})^* \tilde{\chi}_0(\ii \omega) v_c^{1/2}\|_\mathrm{HS} \leq \frac{C}{1+|\omega|}, 
  \end{equation}
  where $C$ depends on $h$ and $N$.
\end{lemma}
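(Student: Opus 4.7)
Starting from the alternative expression~\eqref{eq:chi0_other_definition} for $\tilde{\chi}_0(\ii \omega)$, the plan is to write the sandwiched operator as a finite sum of operators of the form $B_k^* A_k(\omega) B_k$ with $B_k = M_{\phi_k} v_c^{1/2}$ (here $M_{\phi_k}$ denotes multiplication by $\phi_k$) and
\[
  A_k(\omega) = P_k \frac{h-\epsilon_k}{(h-\epsilon_k)^2+\omega^2} P_k,
\]
where $P_k$ is the orthogonal projector onto $\{\phi_k\}^\perp$. This reduces the task, via Lemma~\ref{lem:BAB_Hilbert-schmidt}, to controlling the operator norm $\|A_k(\omega)\|$ and the Hilbert--Schmidt norm $\|B_k B_k^*\|_\mathrm{HS}$ separately.

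The estimate on $\|A_k(\omega)\|$ relies on the elementary inequality $\bigl|\tfrac{x}{x^2+\omega^2}\bigr|\leq \min\bigl(\tfrac{1}{|x|},\tfrac{1}{2|\omega|}\bigr)$ combined with a spectral gap for $(h-\epsilon_k)|_{\{\phi_k\}^\perp}$. Assumption~\ref{assumption:energy_gap} gives $\epsilon_n < \epsilon_{n+1} \leq \inf \sigma_\mathrm{ess}(h)$, so each $\epsilon_k$ for $1\leq k\leq n$ is strictly below the essential spectrum and separated from every other eigenvalue of $h$ that differs from it; thus there exists $g>0$ such that the spectrum of $(h-\epsilon_k)|_{P_k L^2}$ either equals $0$ (on degeneracy directions, where $A_k$ acts as $0$ since $x/(x^2+\omega^2)$ vanishes at $x=0$) or has absolute value $\geq g$. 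A short case analysis ($|\omega|\leq g/2$ versus $|\omega|>g/2$) then yields
\[
  \|A_k(\omega)\| \leq \min\!\Bigl(\tfrac{1}{g},\tfrac{1}{2|\omega|}\Bigr) \leq \frac{C_g}{1+|\omega|}.
\]

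For the Hilbert--Schmidt factor, $B_k B_k^* = M_{\phi_k} v_c M_{\phi_k}$ has integral kernel $\phi_k(r)\phi_k(r')/|r-r'|$, and its squared Hilbert--Schmidt norm is
\[
  \|B_k B_k^*\|_\mathrm{HS}^2 = \int\!\!\int \frac{|\phi_k(r)|^2|\phi_k(r')|^2}{|r-r'|^2}\,\mathrm{d}r\,\mathrm{d}r'.
\]
This is finite by the Hardy--Littlewood--Sobolev inequality, which bounds it by $C\,\||\phi_k|^2\|_{L^{3/2}}^2 = C\,\|\phi_k\|_{L^3}^4$; since $\phi_k \in H^2(\RR^3)\hookrightarrow L^3(\RR^3)$ as an eigenfunction of $h$ below the essential spectrum, this norm is controlled in terms of $h$.

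Combining these two ingredients via Lemma~\ref{lem:BAB_Hilbert-schmidt} and summing the $n = N/2$ contributions gives the claimed bound with a constant depending on $h$ (through $g$ and the $\|\phi_k\|_{L^3}$) and on $N$. The main subtlety I expect is the spectral-gap step: one must be careful that the projector $P_k$ genuinely removes the pole of $1/(h-\epsilon_k)$ at $\epsilon_k$ even in the presence of degenerate eigenvalues, for which the observation that $x/(x^2+\omega^2)$ vanishes at $x=0$ makes the argument work regardless of multiplicities. Everything else---the bound on $\|A_k(\omega)\|$ and the HLS estimate on $\|B_k B_k^*\|_\mathrm{HS}$---is standard.
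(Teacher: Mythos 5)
Your proposal is correct and follows essentially the same route as the paper: decompose $(v_c^{1/2})^*\tilde{\chi}_0(\ii\omega)v_c^{1/2}$ via the projected form \eqref{eq:chi0_other_definition}, apply Lemma~\ref{lem:BAB_Hilbert-schmidt} with the operator norm of $P_k\frac{h-\epsilon_k}{(h-\epsilon_k)^2+\omega^2}P_k$ controlled by the spectral gap and $\sup_\lambda \frac{|\lambda|}{\lambda^2+\omega^2}\le\frac{1}{2|\omega|}$, and check that $\phi_k v_c \phi_k$ is Hilbert--Schmidt from its explicit kernel. The only cosmetic differences are that you bound the kernel by Hardy--Littlewood--Sobolev (via $\|\phi_k\|_{L^3}$) where the paper uses the Hardy inequality (via $\|\phi_k\|_{H^1}$), and your handling of possible degeneracies of $\epsilon_k$ is slightly more careful than the paper's; both are fine.
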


\begin{proof}
We want to apply Lemma~\ref{lem:BAB_Hilbert-schmidt} to $(v_c^{1/2})^* \tilde{\chi}_0(\ii \omega) v_c^{1/2}$.
  The operator $P_i \frac{h-\epsilon_i}{(h-\epsilon_i)^2 + \omega^2}P_i$ is self-adjoint and bounded on $L^2(\RR^3)$. Since for $|\omega|>\epsilon_{i+1}-\epsilon_i$, we have
  \begin{equation}
    \sup\limits_{\lambda} \frac{\lambda}{\lambda^2 + \omega^2} = \frac{1}{2|\omega|},
  \end{equation}
  the operator norm is bounded by $\frac{C}{1+|\omega|}$ where $C$ depends on the spectral gap $\epsilon_{i+1} - \epsilon_i$. 

  All that is left to prove is that the operator $(\phi_j v_c^{1/2})(\phi_k v^{1/2}_c)^* = \phi_j v_c \phi_k$ for $1 \leq j,k \leq n$ is a Hilbert-Schmidt operator. 
  The kernel of the operator $\phi_j v_c \phi_k$ is given by 
  \begin{equation}
    K(x,y) := \frac{\phi_j(x)\phi_k(y)}{|x-y|}.
  \end{equation}
  We have 
  \begin{equation}
    \|K\|^2_{L^2} = \int_{\RR^3} \int_{\RR^3} \left( \frac{\phi_j(x)\phi_k(y)}{|x-y|} \right)^2 \, \mathrm{d}x\, \mathrm{d}y \leq C \|\phi_k\|_{H^1}^2 \|\phi_j\|_{L^2}^2  \infty,
  \end{equation}
  by the Hardy inequality. 
  This shows that $(\phi_j v_c^{1/2})(\phi_k v_c^{1/2})^*$ for $1 \leq j,k \leq n$ is a Hilbert-Schmidt operator. 
\end{proof}

We have now all the ingredients to prove the well-posedness of the phRPA correlation energy.

\begin{proof}[Proof of Theorem~\ref{thm:phrpa_well-posedness}]
  The operator $(v_c^{1/2})^* \tilde{\chi}_0(\ii \omega)v_c^{1/2}$ is a nonpositive self-adjoint Hilbert-Schmidt operator on $L^2(\RR^3)$. 
  Hence by Lemma~\ref{lem:trace_of_log_vs_hilbert-schmidt}, for $\omega \not= 0$, we have
  \begin{equation}
    \label{eq:trace_bound}
    - \tfrac{1}{2} \|(v_c^{1/2})^* \tilde{\chi}_0(\ii \omega)v_c^{1/2}\|^2_{\mathrm{HS}} \leq \tr \big( \log(\id- (v_c^{1/2})^* \tilde{\chi}_0(\ii \omega)v_c^{1/2} ) + (v_c^{1/2})^* \tilde{\chi}_0(\ii \omega) v_c^{1/2} \big) \le 0.
  \end{equation}
  By Lemma~\ref{lem:symmetrized_chi_hilbert_schmidt}, we have 
  \begin{equation}
    \|(v_c^{1/2})^* \tilde{\chi}_0(\ii \omega)v_c^{1/2}\|_{\mathrm{HS}} \leq \frac{C}{1+|\omega|}.
  \end{equation}
  Thus the LHS in Equation~\eqref{eq:trace_bound} is integrable with respect to $\omega$, so the phRPA correlation energy is well-defined. 
\end{proof}

\section{Dissociation in the $N$-body model and in restricted Hartree-Fock}
\label{sec:dissociation_Nbody_rhf}

Before showing the exact dissociation of H$_2$ in phRPA, we recall in this section what happens in the $N$-body case and in the restricted Hartree-Fock model. 

\subsection{Exact dissociation in the $N$-body model}

The exact dissociation of H$_2$ is straightforward to establish in the $N$-body model. 
The external potential is given by 
\[
  v_\mathrm{ext}(r) = v(r-R) + v(r+R),
\]
with $v \in L^2(\RR^3)+L_\varepsilon^\infty(\RR^3)$.
The H$_2$ state is described by the ground-state wavefunction $\Psi_0 \in \bigwedge\limits_{i=1}^2 L^2(\RR^3 \times \Z_2)$ of the lowest eigenvalue of 
\begin{equation}
  H_2(v_\mathrm{ext},w) =  \sum\limits_{i=1}^2 \Big(-\frac{1}{2}\Delta_{r_i} + v_{\mathrm{ext}}(r_i) \Big) + w(r_1-r_2),
\end{equation}
where $w(r) = \tfrac{1}{|r|} \in L^2(\RR^3)+L_\varepsilon^\infty(\RR^3)$.

At the dissociation limit, i.e., when $|R|$ goes to $\infty$, each electron will bind to one nucleus, hence we expect the whole system to behave as two independent hydrogen atoms. 

\begin{proposition}[Exact dissociation in the $N$-body electronic Schrödinger equation]
  \label{prop:exact_dissociation_H2_Nbodycase}
  Let $E_0(R)$ be the lowest eigenvalue of $H_2(v_\mathrm{ext},w)$ acting on $\bigwedge_{i=1}^2 L^2(\RR^3 \times \Z_2)$ with domain $\otimes_{i=1}^2 H^2(\RR^3 \times \Z_2)$. 
  Let $\epsilon_0$ be the lowest eigenvalue of $-\frac{1}{2}\Delta + v$. Then we have
  \begin{equation}
    \lim\limits_{|R| \to \infty} E_0(R) = 2 \epsilon_0.
  \end{equation}
\end{proposition}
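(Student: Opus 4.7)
The plan is to bracket $E_0(R)$ between matching upper and lower bounds both tending to $2\epsilon_0$ as $|R|\to\infty$: a variational trial state built from two isolated hydrogen ground states gives the upper bound, while dropping the positive interaction $w$ reduces the lower bound to a one-body spectral problem on the double-well operator, to be handled by IMS localization.

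For the upper bound, let $\psi_0$ be the $L^2$-normalized ground state of $-\tfrac{1}{2}\Delta + v$ (with exponential decay by Agmon), and set $\psi_L(r) = \psi_0(r+R)$, $\psi_R(r) = \psi_0(r-R)$, and $S_R = \langle \psi_L, \psi_R\rangle$. Form the normalized spin-singlet trial state
\[
\Psi_R(x_1,x_2) = \frac{\psi_L(r_1)\psi_R(r_2) + \psi_R(r_1)\psi_L(r_2)}{\sqrt{2(1+S_R^2)}} \otimes \chi_{\mathrm{sing}}(s_1,s_2),
\]
which is antisymmetric because the spatial factor is symmetric while $\chi_{\mathrm{sing}} = \tfrac{1}{\sqrt{2}}(\chi_\uparrow \chi_\downarrow - \chi_\downarrow \chi_\uparrow)$ is antisymmetric. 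Expanding $\langle \Psi_R, H_2(v_{\mathrm{ext}}, w) \Psi_R\rangle$ and using $(-\tfrac{1}{2}\Delta + v(\cdot \mp R))\psi_{L/R} = \epsilon_0 \psi_{L/R}$ isolates a principal $2\epsilon_0$ contribution plus (i) cross-potential terms of the form $\int v(r-R)\psi_0(r+R)^2\, dr$, and (ii) the electron-electron interaction dominated by $\int \psi_L^2(r_1)\psi_R^2(r_2)|r_1-r_2|^{-1}\, dr_1 dr_2$ together with exchange corrections carrying factors of $S_R$. Both (i) and (ii) vanish as $|R|\to\infty$: (i) by splitting $v = v_2 + v_\infty$ with $v_2 \in L^2$ compactly supported and $\|v_\infty\|_{L^\infty} \leq \varepsilon$, then letting $\varepsilon\to 0$; (ii) because the two hydrogen densities are centered a distance $\sim 2|R|$ apart, so the Coulomb integral decays like $1/|R|$. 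Hence $E_0(R) \le 2\epsilon_0 + o(1)$.

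For the lower bound, the positivity $w \ge 0$ gives $E_0(R) \ge 2\mu_0(R)$, where $\mu_0(R)$ is the lowest eigenvalue of $h(R) = -\tfrac{1}{2}\Delta + v(\cdot - R) + v(\cdot + R)$; the factor $2$ reflects that both electrons may occupy the lowest orbital of $h(R)$ with opposite spins in the singlet ground configuration. That $\mu_0(R) \to \epsilon_0$ is shown by matched estimates: the trial $\psi_L$ gives $\mu_0(R) \le \epsilon_0 + o(1)$, and for the reverse direction I would apply IMS localization using smooth cutoffs $J_\pm$ with $J_+^2 + J_-^2 \equiv 1$, each supported in the half-space containing $\pm R$, and $|\nabla J_\pm| \lesssim 1/|R|$. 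The IMS identity
\[
h(R) = J_+ h(R) J_+ + J_- h(R) J_- - |\nabla J_+|^2 - |\nabla J_-|^2
\]
reduces the problem to bounding each localized piece; on $\mathrm{supp}(J_+)$ the far well $v(\cdot + R)$ is $o(1)$ in operator norm (after splitting $v = v_2 + v_\infty$ as above), hence $J_+ h(R) J_+ \ge (\epsilon_0 - o(1)) J_+^2$, and symmetrically for $J_-$. Summing yields $\mu_0(R) \ge \epsilon_0 - o(1)$.

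The main obstacle is this IMS lower bound: $h(R)$ really does lie strictly below $\epsilon_0$ by an exponentially small tunneling splitting, so the argument must genuinely exploit spatial separation rather than any positivity of the perturbation. Carefully handling the $L^2 + L^\infty_\varepsilon$ class of $v$ in the cross-well terms — first making $\|v_\infty\|_{L^\infty}$ small to kill the global $L^\infty$ tail, then sending $|R|\to\infty$ to push the compactly supported $L^2$ piece out of the opposite half-space, then sending $\varepsilon\to 0$ — is the principal technical step, though ultimately a standard HVZ/IMS computation.
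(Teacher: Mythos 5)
Your proof is correct and follows essentially the same route as the paper: the lower bound by discarding the nonnegative interaction $w$ and reducing to the one-body double-well operator (the spin degeneracy giving the factor $2$), and the upper bound by a normalized singlet trial state built from the two translated atomic ground states, with cross-potential, overlap, and Coulomb terms vanishing as $|R|\to\infty$. The only difference is that you spell out the one-body convergence $\mu_0(R)\to\epsilon_0$ via IMS localization, where the paper simply invokes the $L^2+L^\infty_\varepsilon$ structure of $v$ (and elsewhere cites standard double-well asymptotics); this added detail is fine, apart from the harmless missing factor $\tfrac12$ in the localization error for the $-\tfrac12\Delta$ normalization.
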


\begin{proof}[Proof of Proposition~\ref{prop:exact_dissociation_H2_Nbodycase}]
\textbf{Lower bound} Using that $w\geq 0$, we have a lower bound on the ground-state $E_0(R)$
      \begin{align*}
      E_0(\mathrm{H}_2) & \geq 2 \min\limits_{\phi \in L^2(\mathbb{R}^3), \|\phi\|_{L^2}=1} \big\langle \phi, \big( -\tfrac{1}{2} \Delta_r + v(r-R) + v(r+R)\big)  \phi \big\rangle  \\
      & \geq 2 \min\limits_{\phi \in L^2(\mathbb{R}^3), \|\phi\|_{L^2}=1} \big\langle \phi, \big( -\tfrac{1}{2} \Delta_r + v(r) + v(r+2R) \big) \phi \big\rangle.  
    \end{align*}
    Using that $v \in L^2(\RR^3)+L^\infty_\varepsilon(\RR^3)$, we have 
    \[
        \lim\limits_{|R| \to \infty}\min\limits_{\phi \in L^2(\mathbb{R}^3), \|\phi\|_{L^2}=1} \big\langle \phi, \big( -\tfrac{1}{2} \Delta_r + v(r) + v(r+2R) \big) \phi \big\rangle = \min\limits_{\phi \in L^2(\mathbb{R}^3), \|\phi\|_{L^2}=1} \big\langle \phi, \big( -\tfrac{1}{2} \Delta_r + v \big) \phi \big\rangle = \epsilon_0.
    \]
    \textbf{Upper bound}
    Let $\psi(x_1,x_2) = \frac{\phi_0(\cdot-R) \delta_{\uparrow} \wedge \, \phi_0(\cdot+R) \delta_{\downarrow} + \phi_0(\cdot-R) \delta_{\downarrow} \wedge \, \phi_0(\cdot+R) \delta_{\uparrow}}{\|\phi_0(\cdot-R) \delta_{\uparrow} \wedge \, \phi_0(\cdot+R) \delta_{\downarrow} + \phi_0(\cdot-R) \delta_{\downarrow} \wedge \, \phi_0(\cdot+R) \delta_{\uparrow}\|}$ then $E_0(R)$ is bounded by \linebreak $\braopket{\psi}{H_2(v_\mathrm{ext},w)}{\psi}$. Using the exponential decay of the eigenfunction $\phi_0$ and $w\in L^2(\RR^3)+L^\infty_\varepsilon(\RR^3)$, we can show that the upper bound converges to $2\epsilon_0$ as $|R|$ goes to $\infty$. 
\end{proof}

\subsection{Dissociation of H$_2$ in restricted Hartree-Fock}
\label{subsec:dissociation_rhf}

In Section~\ref{sec:phrpa_dissociation}, we will look at the dissociation of H$_2$ in phRPA using the lowest eigenfunctions of a general one-electron model of the H$_2$ molecule. 
In order to prepare for that, we first need to examine what happens for the restricted Hartree-Fock (RHF) energy functional as H$_2$ dissociates.  
We show in Proposition~\ref{prop:dissociation_RHF} that RHF does not correctly dissociate H$_2$.
Instead, it has an error term that is equal to the Hartree energy of a single H atom. 
 
First, we state the properties of our one-electron model that will also be used in Section~\ref{sec:phrpa_dissociation}. 
The one-electron model for the H$_2$ molecule in our analysis is 
\begin{equation}
  \label{eq:one_electron_model_H2}
  h^{(\mathrm{H}_2)} = -\tfrac{1}{2}\Delta +v(r-R)+v(r+R),  
\end{equation}
where $v \in L^2(\RR^3)+L_\varepsilon^\infty(\RR^3)$. 
We assume that $v$ has at least a Coulomb type decay, i.e., for all $r>0$, there exist $v_2 \in L^2(\RR^3)$ and $v_\infty \in L^\infty(\RR^3)$ such that $v=v_2+v_\infty$, $\|v_\infty\|_\infty \leq \frac{1}{r}$ and $\supp v_2 \subset B_r(0)$.

The corresponding Hamiltonian for the H atom is 
\begin{equation}
  \label{eq:one_electron_model_H}
  h^{(\mathrm{H})} = -\tfrac{1}{2}\Delta +v.
\end{equation}
The operator $h^{(\mathrm{H})}$ is acting on $L^2(\RR^3)$ with domain $H^2(\RR^3)$.

\begin{assumption}
  \label{assumption:one-electron-model}
  The lowest eigenvalue of $h^{(\mathrm{H})}$ is negative and simple. 
\end{assumption}

We denote by $(\epsilon^{(\mathrm{H}_2)}_k ,\psi_k)$ the eigenpairs of $h^{(\mathrm{H}_2)}$ and $(\epsilon^{(\mathrm{H})}_k ,\phi_k)$ the eigenpairs of $h^{(\mathrm{H})}$.
Under the above assumptions, we know that the eigenvalue gap $\epsilon_1^{(\mathrm{H}_2)}-\epsilon_0^{(\mathrm{H}_2)}$ is closing, as the eigenfunctions consists of two bubbles located at $R$ and $-R$, which are symmetric for $\psi_0$ and antisymmetric for $\psi_1$. 
These properties are collected in Proposition~\ref{prop:one-e_model_properties}.

\begin{proposition}[Properties of the eigenpairs of $h^{(\mathrm{H})}$ and $h^{(\mathrm{H}_2)}$ \cite{cmp/1103908148}]
  \label{prop:one-e_model_properties}
  The following assertions are true: there are constants $c,C>0$ independent of $R$ such that
  \begin{enumerate}
    \item the eigenfunctions $\phi_k$ have exponential decay
    \item $\big\| \psi_0 - \tfrac{1}{\sqrt{2}}(\phi_0(x-R)+\phi_0(x+R) ) \big\|_{H^1} \leq Ce^{-c|R|}$
    \item $\big\| \psi_1 - \tfrac{1}{\sqrt{2}}(\phi_0(x-R)-\phi_0(x+R) ) \big\|_{H^1} \leq Ce^{-c|R|}$
    \item $|\epsilon^{(\mathrm{H}_2)}_1 - \epsilon^{(\mathrm{H}_2)}_0| \leq Ce^{-c|R|}$
    \item $|\epsilon^{(\mathrm{H})}_0 - \epsilon^{(\mathrm{H}_2)}_0|\leq Ce^{-c|R|}$ 
  \end{enumerate}
\end{proposition}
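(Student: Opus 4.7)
The plan is to follow the classical semiclassical strategy for double-well potentials, combined with the parity symmetry of $h^{(\mathrm{H}_2)}$. First, for assertion (1), I would invoke Agmon's theorem: since $\phi_0$ is an $L^2$ eigenfunction of $h^{(\mathrm{H})} = -\tfrac12 \Delta + v$ with eigenvalue $\epsilon_0^{(\mathrm{H})} < 0$, and since $v$ decays at infinity (with the quantitative Coulomb-type control assumed just before the proposition), the Agmon metric is bounded from below by $\sqrt{-2\epsilon_0^{(\mathrm{H})}}$ at infinity. This yields $|\phi_0(x)| + |\nabla\phi_0(x)| \le C e^{-c|x|}$ for some $c > 0$, which is the desired $H^1$ decay.

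Next, I build trial functions $\chi_\pm(x) = \frac{1}{\sqrt{2(1\pm S(R))}}\bigl(\phi_0(x-R) \pm \phi_0(x+R)\bigr)$, where $S(R) = \langle \phi_0(\cdot-R), \phi_0(\cdot+R)\rangle$. The decay bound from step~1 gives $|S(R)| \le Ce^{-c|R|}$. A direct computation, using $h^{(\mathrm{H})}\phi_0 = \epsilon_0^{(\mathrm{H})} \phi_0$, yields
\begin{equation*}
\bigl(h^{(\mathrm{H}_2)} - \epsilon_0^{(\mathrm{H})}\bigr) \phi_0(\cdot - R) = v(\cdot + R)\,\phi_0(\cdot - R),
\end{equation*}
and an analogous identity with $R$ replaced by $-R$. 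The cross term $v(\cdot + R)\phi_0(\cdot-R)$ has $L^2$ norm $O(e^{-c|R|})$: on the ball $B_{|R|}(R)$ one uses the Coulomb decay $\|v_\infty\|_\infty \le 1/|R|$ together with $\|\phi_0(\cdot-R)\|_{L^2} = 1$, while on the complement one uses the exponential decay of $\phi_0(\cdot-R)$ against the local $L^2$ part of $v(\cdot+R)$. Hence $\|(h^{(\mathrm{H}_2)} - \epsilon_0^{(\mathrm{H})})\chi_\pm\|_{L^2} \le C e^{-c|R|}$.

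Now I exploit the symmetry: $h^{(\mathrm{H}_2)}$ commutes with the parity operator $P: f(x)\mapsto f(-x)$, so its spectrum decomposes into even and odd sectors. By Perron--Frobenius, the ground state $\psi_0$ can be chosen strictly positive, hence even, and so it lies in the same sector as $\chi_+$; the next eigenstate $\psi_1$ must then lie in the odd sector and match $\chi_-$. Applying the min-max principle within each symmetry sector to $\chi_\pm$ gives (5), and since both $\chi_\pm$ approximate the atomic energy within $Ce^{-c|R|}$ while lying in orthogonal sectors, we also obtain (4). Finally, for (2) and (3), I would use a resolvent/Davis--Kahan argument: writing $\chi_\pm = \alpha_\pm \psi_{0/1} + \chi_\pm^\perp$ with $\chi_\pm^\perp$ orthogonal to the relevant eigenspace, the estimate $\|(h^{(\mathrm{H}_2)} - \epsilon_0^{(\mathrm{H})})\chi_\pm^\perp\|_{L^2}$ together with a uniform lower bound on the distance of $\epsilon_0^{(\mathrm{H})}$ to the remainder of the even/odd spectrum of $h^{(\mathrm{H}_2)}$ forces $\|\chi_\pm^\perp\|_{H^1} = O(e^{-c|R|})$, giving (2) and (3) after upgrading the $L^2$ control to $H^1$ via the standard energy identity $\|\nabla u\|^2 \le 2(\|h^{(\mathrm{H}_2)} u\|\,\|u\| + \|v\|_{L^2+L^\infty} \|u\|^2)$.

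The main obstacle is the second ingredient of the Davis--Kahan step: one needs a uniform-in-$R$ lower bound on the gap between $\epsilon_0^{(\mathrm{H})}$ and the third eigenvalue $\epsilon_2^{(\mathrm{H}_2)}$ (and, in the odd sector, from the second odd eigenvalue of $h^{(\mathrm{H}_2)}$). This requires ruling out the possibility that another eigenvalue of $h^{(\mathrm{H}_2)}$ crowds into a neighborhood of $\epsilon_0^{(\mathrm{H})}$ as $|R|\to\infty$. The standard way to do this is a two-dimensional Feshbach--Schur reduction onto $\mathrm{span}(\chi_+,\chi_-)$, showing that the reduced operator captures the full low-lying spectrum up to $O(e^{-c|R|})$. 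Since this is exactly the content of the cited reference, I would quote that result directly rather than reprove it, and focus the proof on the construction of $\chi_\pm$ and the symmetry argument sketched above.
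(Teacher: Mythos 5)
The paper gives no proof of this proposition at all: it is imported wholesale from the cited double-well reference, so your closing decision to quote that reference for the key spectral input is consistent with what the authors themselves do, and your outline (Agmon decay, symmetric/antisymmetric trial functions $\chi_\pm$, parity decomposition, a two-dimensional Feshbach--Schur or Davis--Kahan reduction) is the standard route that such references follow.

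As a self-contained argument, however, your sketch has a genuine quantitative gap. Under the stated hypothesis on $v$ (Coulomb-type decay, $\|v_\infty\|_{L^\infty}\le 1/r$ outside $B_r(0)$), your own two-region estimate of the residual gives
\[
\big\|\big(h^{(\mathrm{H}_2)}-\epsilon_0^{(\mathrm{H})}\big)\chi_\pm\big\|_{L^2}\;\lesssim\;\frac{1}{|R|}+e^{-c|R|}\;=\;O\big(|R|^{-1}\big),
\]
not $O(e^{-c|R|})$: the $v_\infty$ part of $v(\cdot+R)$ is merely of size $1/|R|$ on the ball where $\phi_0(\cdot-R)$ is concentrated, and no exponential factor suppresses it. Asserting exponential smallness there is an internal inconsistency, and it propagates: the Davis--Kahan step then delivers only $O(1/|R|)$ accuracy in items (2), (3) and (5), and for item (4) the argument ``both parity sectors contain spectrum within $Ce^{-c|R|}$ of $\epsilon_0^{(\mathrm{H})}$'' fails altogether, since the crude residual bound cannot detect that the even and odd levels shift by the \emph{same} $O(1/|R|)$ amount and differ only by an exponentially small tunneling term. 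Extracting that cancellation of the common diagonal shift in the $2\times 2$ interaction matrix, with Agmon-weighted control of the off-diagonal (tunneling) term, is exactly the content of the cited reference; so the exponential rates in (2)--(5) must be taken from there (or reproved by a genuine tunneling analysis), and cannot be obtained from the trial-function computation as you present it.
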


The RHF energy of the H$_2$ molecule is obtained by restricting the minimization problem for the $N$-body Schrödinger equation to functions of the form 
\begin{equation}
  \Psi(x_1,x_2) = \psi(r_1) \delta_{\uparrow}(s_1) \wedge \, \psi(r_2) \delta_{\downarrow}(s_2).
\end{equation}
In our case, the RHF energy is given by
\begin{equation}
  \label{eq:RHF_energy}
  \mathcal{E}_{\mathrm{H}_2}^{\mathrm{RHF}}(\psi) = 2 \braopket{\psi}{h^{(\mathrm{H}_2)}}{\psi} + \int_{\RR^3 \times \RR^3} |\psi(r)|^2|\psi(r')|^2w(r-r') \, \mathrm{d}r \mathrm{d}r'.
\end{equation}

\begin{proposition}[Dissociation in RHF]
\label{prop:dissociation_RHF}
  Let $\psi_0$ be the ground-state of $h^{(\mathrm{H}_2)}$. Then we have
  \begin{equation}
    \lim\limits_{|R| \to \infty} \mathcal{E}_{\mathrm{H}_2}^{\mathrm{RHF}}(\psi_0) = 2 \braopket{\phi_0}{h^{(\mathrm{H})}}{\phi_0} + \frac{1}{2} \int_{\RR^3 \times \RR^3} |\phi_0(r)|^2 |\phi_0(r')|^2 w(r-r') \, \mathrm{d}r \mathrm{d}r'.
  \end{equation}
\end{proposition}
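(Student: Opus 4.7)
The plan is to split the RHF energy into its one-body part $2\langle\psi_0,h^{(\mathrm{H}_2)}\psi_0\rangle$ and its Hartree-type part, and to pass to the limit in each separately. The one-body contribution equals $2\epsilon_0^{(\mathrm{H}_2)}$, which by item 5 of Proposition~\ref{prop:one-e_model_properties} converges to $2\epsilon_0^{(\mathrm{H})}=2\langle\phi_0,h^{(\mathrm{H})}\phi_0\rangle$ with exponential rate in $|R|$. The real work lies in the Hartree integral $\int |\psi_0(r)|^2|\psi_0(r')|^2w(r-r')\,\mathrm{d}r\mathrm{d}r'$.

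First I would replace $\psi_0$ by its symmetric ansatz $\psi_R(r):=\tfrac{1}{\sqrt 2}(\phi_0(r-R)+\phi_0(r+R))$. By item 2 of Proposition~\ref{prop:one-e_model_properties}, $\|\psi_0-\psi_R\|_{H^1}\le Ce^{-c|R|}$, in particular also in $L^{12/5}(\RR^3)$ by Sobolev embedding. The Hartree functional
\[
D[\rho]=\int\int\frac{\rho(r)\rho(r')}{|r-r'|}\,\mathrm{d}r\mathrm{d}r'
\]
is jointly continuous on $L^{6/5}$ (Hardy--Littlewood--Sobolev), so the map $\psi\mapsto D[|\psi|^2]$ is locally Lipschitz on $L^{12/5}$ and hence on $H^1$. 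Using multilinearity of the bilinear form $(\rho,\sigma)\mapsto\iint\rho(r)\sigma(r')/|r-r'|$, the difference between the Hartree energy of $\psi_0$ and of $\psi_R$ is bounded by $C(\|\psi_0\|_{H^1}^3+\|\psi_R\|_{H^1}^3)\|\psi_0-\psi_R\|_{H^1}$, which is $O(e^{-c|R|})$.

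Next I would expand $|\psi_R(r)|^2=\tfrac12\bigl(\phi_0(r-R)^2+\phi_0(r+R)^2+2\phi_0(r-R)\phi_0(r+R)\bigr)$. The cross term $\phi_0(r-R)\phi_0(r+R)$ is pointwise dominated by $Ce^{-c|R|}\phi_0(r-R)^{1/2}\phi_0(r+R)^{1/2}$ uniformly in $r$ thanks to the exponential decay of $\phi_0$ (item 1), so it contributes only $O(e^{-c|R|})$ to $D[|\psi_R|^2]$ after the same HLS continuity argument. What remains is the expansion of
\[
\tfrac14\int\int\bigl(\phi_0(r-R)^2+\phi_0(r+R)^2\bigr)\bigl(\phi_0(r'-R)^2+\phi_0(r'+R)^2\bigr)w(r-r')\,\mathrm{d}r\mathrm{d}r'
\]
into four integrals. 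By translation invariance of $w$ the two diagonal terms (same $\pm R$ in both factors) each equal $\int\int|\phi_0(r)|^2|\phi_0(r')|^2 w(r-r')\,\mathrm{d}r\mathrm{d}r'$, producing $\tfrac12\int\int|\phi_0|^2|\phi_0|^2 w$ in the limit. The two off-diagonal terms are of the form $\int\int|\phi_0(r)|^2|\phi_0(r')|^2 w(r-r'\mp 2R)\,\mathrm{d}r\mathrm{d}r'$, which is bounded by $C/|R|$ for large $|R|$ (split the integration into $|r-r'|\le|R|$ and its complement, using $w(\cdot-2R)\lesssim 1/|R|$ on the first region and $L^1$ integrability of $|\phi_0|^2$ combined with the Newton potential bound on the second). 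These off-diagonal contributions therefore vanish in the limit.

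The main (and really only) obstacle is justifying the replacement $\psi_0\leadsto\psi_R$ inside the singular Hartree integral; this is handled once one notes that $\psi\mapsto D[|\psi|^2]$ is Lipschitz on bounded sets of $H^1$ via HLS and Sobolev embedding $H^1\hookrightarrow L^{12/5}$. Everything else is bookkeeping of exponential-decay estimates supplied by Proposition~\ref{prop:one-e_model_properties}. Collecting the one-body limit and the surviving diagonal Hartree terms yields exactly $2\langle\phi_0,h^{(\mathrm{H})}\phi_0\rangle+\tfrac12\int\int|\phi_0(r)|^2|\phi_0(r')|^2 w(r-r')\,\mathrm{d}r\mathrm{d}r'$, as claimed.
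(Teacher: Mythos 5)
Your proof is correct and follows essentially the same route as the paper, which only sketches the argument as "exponential localization of $\phi_0$ (Proposition~\ref{prop:one-e_model_properties}) plus the decay of $w$"; you supply exactly those ingredients, with the HLS/Sobolev continuity of the Hartree term and the $O(|R|^{-1})$ estimate on the off-diagonal (cross-atom) Coulomb integrals filling in the bookkeeping the paper leaves implicit.
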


\begin{proof}
  The proof follows from the exponential localization of the eigenfunctions $\phi_0$ given in Proposition~\ref{prop:one-e_model_properties} and the decay of $w$.
\end{proof}

The Hartree-Fock energy $\mathcal{E}^{\mathrm{RHF}}_{\mathrm{H}}$ of the H atom is simply given by $\epsilon_0$ (since there is only one electron in the system), hence within the Hartree-Fock model, the dissociation limit is incorrectly described.

\begin{remark}
  The wrong extra Hartree energy in the dissociation limit comes from a spurious self-interaction of the doubly occupied state in the RHF model. 
  This occurs because the atomic model has an odd number of electrons and is a consequence of taking into account the spin of the electrons.
  Indeed if the number of electrons was a multiple of 4, the exchange energy in the RHF functional would exactly cancel the Hartree term.
\end{remark}

\section{Exact dissociation of H$_\mathbf{2}$ in phRPA}
\label{sec:phrpa_dissociation}

In this section, we prove the exact dissociation of H$_2$ in phRPA, i.e., the phRPA correlation energy exactly compensates for the extra Hartree energy term in the dissociation limit. 
In our case, the total phRPA energy can be written
\begin{equation}
  \label{eq:total_phrpa_energy_H2}  
  \mathcal{E}^\mathrm{phRPA}(H_2) = \mathcal{E}_{\mathrm{H}_2}^\mathrm{RHF}(\psi_0)+E_c^\mathrm{phRPA}(h^{(\mathrm{H}_2)}),
\end{equation}
where $\psi_0$ is the ground-state of $h^{(\mathrm{H}_2)}$ and $E_c^\mathrm{phRPA}(h^{(\mathrm{H}_2)})$ is the phRPA correlation energy given by the linear response operator of $h^{(\mathrm{H}_2)}$.
Likewise the total phRPA energy of a single $H$ atom is given by 
\begin{equation}
  \label{eq:total_phrpa_energy_H}  
  \mathcal{E}^\mathrm{phRPA}(H) = \mathcal{E}_{\mathrm{H}}^\mathrm{RHF}(\psi_0)+E_c^\mathrm{phRPA}(h^{(\mathrm{H})}) = \langle \phi_0, h^{(\mathrm{H})} \phi_0 \rangle + E_c^\mathrm{phRPA}(h^{(\mathrm{H})}),
\end{equation}
where $\phi_0$ is ground-state of $h^{(\mathrm{H})}$ defined in Equation~\eqref{eq:one_electron_model_H}.

\begin{theorem}[Exact dissociation in the phRPA model]
  \label{thm:phrpa_dissociation}
  Let $\mathcal{E}^\mathrm{phRPA}(H_2)$ and $\mathcal{E}^\mathrm{phRPA}(H)$ be respectively the total phRPA energy of H$_2$ defined in Equation~\eqref{eq:total_phrpa_energy_H2} and the total phRPA energy of H defined in Equation~\eqref{eq:total_phrpa_energy_H}.
  Then under Assumption~\ref{assumption:one-electron-model}, we have exact dissociation of H$_2$ \emph{i.e.}
  \[
    \lim\limits_{|R| \to \infty} \mathcal{E}^\mathrm{phRPA}(H_2) = 2\, \mathcal{E}^\mathrm{phRPA}(H).
  \]
\end{theorem}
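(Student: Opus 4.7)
By Proposition~\ref{prop:dissociation_RHF}, $\lim_{|R|\to\infty}\mathcal{E}^\mathrm{RHF}_{\mathrm{H}_2}(\psi_0) = 2\epsilon_0^{(\mathrm{H})} + J_0/2$, where $J_0 := \int |\phi_0(r)|^2|\phi_0(r')|^2/|r-r'|\,\mathrm{d}r\mathrm{d}r'$ is the Hartree self-interaction of a single hydrogen orbital, so the theorem reduces to proving $\lim E_c^\mathrm{phRPA}(h^{(\mathrm{H}_2)}) = 2 E_c^\mathrm{phRPA}(h^{(\mathrm{H})}) - J_0/2$. The plan is to isolate the contribution of the near-degenerate transition $\psi_0 \to \psi_1$, whose gap $\Delta_1 := \epsilon_1^{(\mathrm{H}_2)} - \epsilon_0^{(\mathrm{H}_2)}$ collapses as $|R|\to\infty$ by Proposition~\ref{prop:one-e_model_properties}, from all other virtual excitations, for which the gap to $\psi_0$ remains bounded below uniformly in $R$. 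The HOMO--LUMO piece will supply exactly the cancelling $-J_0/2$, while the remainder recovers $2 E_c^\mathrm{phRPA}(h^{(\mathrm{H})})$ via size-consistency.

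\textbf{Rank-one factorization.}
Using Proposition~\ref{prop:chi_zero} and the spectral decomposition of $h^{(\mathrm{H}_2)}$, split $\tilde{\chi}_0^{(\mathrm{H}_2)}(\ii\omega) = \tilde{\chi}_{0,1}(\ii\omega) + \tilde{\chi}_{0,\geq 2}(\ii\omega)$, where
\begin{equation*}
\tilde{\chi}_{0,1}(\ii\omega) = -\frac{4\Delta_1}{\Delta_1^2+\omega^2}\,|\psi_0\psi_1\rangle\langle\psi_0\psi_1|
\end{equation*}
is the rank-one HOMO--LUMO piece. Set $K_{\geq 2}(\omega) := (v_c^{1/2})^*\tilde{\chi}_{0,\geq 2}(\ii\omega)v_c^{1/2}$, $u := v_c^{1/2}(\psi_0\psi_1) \in L^2(\RR^3)$, $\gamma := \|u\|^2$, and $\lambda(\omega) := -4\Delta_1/(\Delta_1^2+\omega^2)$. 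The matrix-determinant lemma applied to the rank-one perturbation $\lambda|u\rangle\langle u|$ of $K_{\geq 2}$ yields
\begin{equation*}
\tr\bigl(\log(I-K)+K\bigr) = \tr\bigl(\log(I-K_{\geq 2})+K_{\geq 2}\bigr) + \log\bigl(1-\lambda\tilde{\gamma}\bigr) + \lambda\gamma,
\end{equation*}
with $\tilde{\gamma}(\omega) := \langle u, (I-K_{\geq 2}(\omega))^{-1}u\rangle$, cleanly separating the HOMO--LUMO contribution (last two terms) from the two-atom contribution ($K_{\geq 2}$).

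\textbf{HOMO--LUMO contribution.}
By Proposition~\ref{prop:one-e_model_properties}, $\psi_0\psi_1 \to \tfrac{1}{2}(|\phi_0(\cdot-R)|^2 - |\phi_0(\cdot+R)|^2)$ in $L^{6/5}$ exponentially fast in $|R|$; combined with $v_c^{1/2} : L^{6/5} \to L^2$ (Hardy--Littlewood--Sobolev) and the decay $\int |\phi_0(r-R)|^2|\phi_0(r'+R)|^2/|r-r'|\,\mathrm{d}r\mathrm{d}r' = O(|R|^{-1})$, one obtains $\gamma \to J_0/2$. Direct computation gives $\int_\RR \lambda(\omega)\,\mathrm{d}\omega = -4\pi$ (independent of $\Delta_1$), so $\int\lambda\gamma\,\mathrm{d}\omega \to -2\pi J_0$. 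For the logarithmic piece, rescaling $\omega = \Delta_1 u$ and invoking the elementary identity $\int_\RR\log(1 + A/(1+u^2))\,\mathrm{d}u = 2\pi(\sqrt{1+A}-1)$ yields $|\int_\RR\log(1-\lambda\tilde{\gamma})\,\mathrm{d}\omega| = O(\sqrt{\Delta_1})\to 0$. Hence the HOMO--LUMO contribution to $E_c^\mathrm{phRPA}(h^{(\mathrm{H}_2)})$ converges to $-J_0/2$ in the dissociation limit.

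\textbf{Two-atom contribution and main obstacle.}
The remaining task is to prove $\tfrac{1}{4\pi}\int\tr\bigl(\log(I-K_{\geq 2})+K_{\geq 2}\bigr)\,\mathrm{d}\omega \to 2 E_c^\mathrm{phRPA}(h^{(\mathrm{H})})$. Using the bonding/antibonding pairing of the $\psi_j$ for $j\geq 2$ (Proposition~\ref{prop:one-e_model_properties}) together with localization estimates for the resolvent of $h^{(\mathrm{H}_2)}$ restricted to $\{\psi_0,\psi_1\}^\perp$, one shows $K_{\geq 2}(\omega) = K^L(\omega) + K^R(\omega) + o(1)$ in Hilbert--Schmidt norm with an $\omega$-integrable bound, where $K^{L,R}$ are the analogous operators for isolated hydrogen atoms at $\mp R$. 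Size-consistency of the trace-log then follows from the factorization $I-K^L-K^R = (I-K^L)(I-K^R) - K^LK^R$ together with $\|K^LK^R\|_\mathrm{tr}\to 0$: the latter holds because $K^{L/R}$ produce output functions sourced near opposite atomic centers while the intervening Coulomb kernel decays as $O(|R|^{-1})$. Dominated convergence (with majorant from Lemma~\ref{lem:symmetrized_chi_hilbert_schmidt}) yields the desired limit, and combining with the HOMO--LUMO analysis completes the proof. This size-consistency step is the main technical obstacle: the nonlinearity of $\log$ makes the RPA correlation energy manifestly non-additive in $\chi_0$, so one must control $\|K^LK^R\|_\mathrm{tr}$ uniformly in $\omega$, combining the exponential localization of bound-state eigenfunctions with the nonlocal mapping properties of $v_c^{1/2}$ from Hardy--Littlewood--Sobolev.
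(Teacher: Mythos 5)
Your proposal is correct in outline and follows essentially the same route as the paper: the same reduction via the RHF limit, the same rank-one HOMO--LUMO splitting of the trace-log (your matrix-determinant identity is exactly the paper's Sherman--Morrison/log-identity computation in Proposition~\ref{prop:splitting_phra_Ec}), the same limits for the two rank-one terms (the paper evaluates the vanishing logarithmic term by residues instead of your scaling identity, obtaining the same $O(\sqrt{\Delta_1})$ bound, see Proposition~\ref{prop:limits_rank_one_terms}), and the same localization/size-consistency treatment of the remainder, which the paper implements through Lemmas~\ref{lem:reduced_chi_split}, \ref{lem:log_lemma}, \ref{lem:first_piece_third_term}, \ref{lem:locality_greens_function} and \ref{lem:limit_reduced_KH}. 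The only caveat is that your final step is a sketch: the $\omega$-integrable trace-norm control of the cross term $K^LK^R$ and the replacement of the $\mathrm{H}_2$ resolvent by the atomic one (the paper's Green's-function locality lemma) are precisely where the paper's technical work lies, so these would need to be carried out in detail, but they do not change the strategy.
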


From Section~\ref{subsec:dissociation_rhf}, it is sufficient to show that the phRPA correlation energy cancels the spurious term in Proposition~\ref{prop:dissociation_RHF}
  \begin{equation}
    \lim_{|R| \to \infty} E_c^\mathrm{phRPA}(H_2) = -\frac{1}{2} \int_{\RR^3 \times \RR^3} |\phi_0(r)|^2 |\phi_0(r')|^2 w(r-r') \, \mathrm{d}r \mathrm{d}r' + 2 E_c^\mathrm{phRPA}(H),
  \end{equation}
The idea of the proof goes as follows. By inserting a resolution of identity $P=\mathrm{id}-|\psi_0\rangle\langle \psi_0|-|\psi_1\rangle\langle \psi_1|$, we write the linear response operator $\chi_0$ of $h^{(\mathrm{H}_2)}$ as 
\[
  \widetilde{\chi_0}(\ii \omega) =  \frac{\epsilon_1^{(\mathrm{H}_2)}-\epsilon_0^{(\mathrm{H}_2)}}{(\epsilon_1^{(\mathrm{H}_2)}-\epsilon_0^{(\mathrm{H}_2)})^2 + \omega^2} |\psi_0 \psi_1 \rangle \langle \psi_0 \psi_1 | + \psi_0 P \frac{h^{(\mathrm{H}_2)}-\epsilon^{(\mathrm{H}_2)}_0}{(h^{(\mathrm{H}_2)}-\epsilon^{(\mathrm{H}_2)}_0)^2 + \omega^2} P \psi_0,
\]
where $|\psi_0 \psi_1 \rangle \langle \psi_0 \psi_1 |$ is the projection onto the product $\psi_0 \psi_1 \in L^2(\RR^3)$ by Proposition~\ref{prop:one-e_model_properties}. 
Plugged in the phRPA correlation energy, the rank-one term gives the first two terms in Proposition~\ref{prop:splitting_phra_Ec}. The first term converges to the negative of the Hartree energy that cancels the RHF spurious term. 
Finally, in Section~\ref{subsec:dissociation-trace-remainder}
we show that the remainder splits into twice the phRPA correlation energy of a single H atom. 
This results from the locality of the ground-state $\phi_0$ and of the resolvent (see Lemma~\ref{lem:locality_greens_function}). 
The estimation of the vanishing terms requires bounds that need to be traceable and integrable with respect to $\omega$. 
In this regard, Lemma~\ref{lem:log_lemma} plays a key role to in order to estimate the trace of operators $\log(\mathrm{id}-A)+A$ by the Hilbert-Schmidt norm of $A$.

\begin{remark}
  The proof of the dissociation limit can be extended for other diatomic molecules under the assumption that $n$ is odd and the highest occupied state with energy $\epsilon_n$ is simple.
\end{remark}

\subsection{The splitting of the trace}

\begin{proposition}[Splitting of the correlation energy]
  \label{prop:splitting_phra_Ec}
  Let $P$ be the orthogonal projector defined by $P = \id-|\psi_0 \rangle \langle \psi_0 | -|\psi_1 \rangle \langle \psi_1 |$.  Let $K(\omega)$ be the operator defined by 
  \begin{equation}
    \label{eq:reduced_chi}
    K(\omega) = 4 (v_c^{1/2})^* \psi_0 P \frac{h^{(\mathrm{H}_2)}-\epsilon^{(\mathrm{H}_2)}_0}{(h^{(\mathrm{H}_2)}-\epsilon^{(\mathrm{H}_2)}_0)^2 + \omega^2} P \psi_0 v_c^{1/2}.
  \end{equation}
  Then the phRPA correlation energy defined in \eqref{eq:phrpa_correlation_energy} can be written as a sum of three terms 
  \begin{multline}
    \label{eq:splitting_trace}
    E_c^{\mathrm{phRPA}}(H_2) = -\frac{1}{\pi} \int_\RR \frac{\epsilon_1^{(\mathrm{H}_2)}-\epsilon_0^{(\mathrm{H}_2)}}{(\epsilon_1^{(\mathrm{H}_2)}-\epsilon_0^{(\mathrm{H}_2)})^2 + \omega^2} \, \mathrm{d}\omega  \int_{\RR^3 \times \RR^3} \psi_1(r)\psi_0(r) \psi_1(r')\psi_0(r') w(r-r') \, \mathrm{d}r \mathrm{d}r' \\
    + \frac{1}{4\pi} \int_\RR \log \Big( 1+ \frac{\epsilon_1^{(\mathrm{H}_2)}-\epsilon_0^{(\mathrm{H}_2)}}{(\epsilon_1^{(\mathrm{H}_2)}-\epsilon_0^{(\mathrm{H}_2)})^2 + \omega^2} \braopket{v_c^{1/2}\psi_1 \psi_0}{(\id+K(\omega))^{-1}}{v_c^{1/2}\psi_1 \psi_0} \Big)\, \mathrm{d}\omega \\
    + \frac{1}{4\pi} \int_\RR \tr \Big( \log\big( \id + K(\omega) \big) - K(\omega) \Big) \, \mathrm{d}\omega.
  \end{multline}
\end{proposition}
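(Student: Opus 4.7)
The plan is to rewrite $(v_c^{1/2})^* \widetilde{\chi}_0(\ii\omega) v_c^{1/2}$ as the sum of an explicit rank-one operator and the operator $K(\omega)$ defined in \eqref{eq:reduced_chi}, and then to use a rank-one matrix-determinant-lemma identity to split the trace of $\log$ in \eqref{eq:phrpa_correlation_energy}. By Proposition~\ref{prop:chi_zero} applied with $n=1$ in the equivalent form \eqref{eq:chi0_other_definition}, we have $\widetilde{\chi}_0(\ii\omega) = -4\psi_0 P_0 \frac{h^{(\mathrm{H}_2)}-\epsilon_0^{(\mathrm{H}_2)}}{(h^{(\mathrm{H}_2)}-\epsilon_0^{(\mathrm{H}_2)})^2+\omega^2} P_0 \psi_0$, with $P_0 = \id - |\psi_0\rangle\langle\psi_0|$ and $\psi_0$ viewed as the multiplication operator. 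Writing $P_0 = |\psi_1\rangle\langle\psi_1| + P$ and using that $\psi_1$ is an eigenvector of $h^{(\mathrm{H}_2)}$ with eigenvalue $\epsilon_1^{(\mathrm{H}_2)}$ makes the cross terms vanish, so with $\alpha(\omega) := \frac{\epsilon_1^{(\mathrm{H}_2)}-\epsilon_0^{(\mathrm{H}_2)}}{(\epsilon_1^{(\mathrm{H}_2)}-\epsilon_0^{(\mathrm{H}_2)})^2+\omega^2}$ and $u := v_c^{1/2}(\psi_0\psi_1)$, a direct computation gives $-(v_c^{1/2})^* \widetilde{\chi}_0(\ii\omega) v_c^{1/2} = 4\alpha(\omega)|u\rangle\langle u| + K(\omega)$.

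For the second step, I would establish the following rank-one splitting identity: for any Hilbert-Schmidt self-adjoint $K \geq 0$ on $L^2(\RR^3)$, any $\beta \geq 0$, and any $u \in L^2(\RR^3)$,
\[
\tr\big[\log(\id + K + \beta|u\rangle\langle u|) - K - \beta|u\rangle\langle u|\big] = \tr\big[\log(\id + K) - K\big] + \log\big(1 + \beta\braket{u}{(\id+K)^{-1}u}\big) - \beta\|u\|^2.
\]
The proof is by homotopy: set $F(t) := \tr[\log(\id + K + t\beta|u\rangle\langle u|) - K - t\beta|u\rangle\langle u|]$, which is finite by Lemma~\ref{lem:trace_of_log_vs_hilbert-schmidt} since the integrand is trace-class. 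Differentiating under the trace yields $F'(t) = \tr\big[\big((\id+K+t\beta|u\rangle\langle u|)^{-1} - \id\big) \beta|u\rangle\langle u|\big]$, and the Sherman-Morrison formula gives $\braket{u}{(\id+K+t\beta|u\rangle\langle u|)^{-1}u} = \frac{q}{1+t\beta q}$ with $q := \braket{u}{(\id+K)^{-1}u}$, so $F'(t) = \beta\big(\frac{q}{1+t\beta q} - \|u\|^2\big)$; integrating from $0$ to $1$ yields the identity. Nonnegativity of $K(\omega)$ ensures $\id + K(\omega)$ is invertible, and this follows because $h^{(\mathrm{H}_2)} - \epsilon_0^{(\mathrm{H}_2)}$ is strictly positive on $\mathrm{Ran}\,P$ (the ground-state is simple and lies in the orthogonal complement of $\mathrm{Ran}\,P$).

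Finally, applying this identity with $\beta = 4\alpha(\omega)$, $K = K(\omega)$, and $u = v_c^{1/2}(\psi_0\psi_1)$, then substituting into \eqref{eq:phrpa_correlation_energy} and integrating $\frac{1}{4\pi}\int_\RR\dd\omega$, the three pieces of the identity reproduce the three lines of \eqref{eq:splitting_trace}: the $-\beta\|u\|^2$ piece integrates to $-\frac{1}{\pi}\int_\RR \alpha(\omega)\,\dd\omega \cdot \|u\|^2$, matching the first line after rewriting $\|u\|^2 = \int_{\RR^3 \times \RR^3} \psi_0(r)\psi_1(r)\psi_0(r')\psi_1(r')\,w(r-r')\,\dd r\, \dd r'$; the logarithmic piece matches the second line; and the $\tr[\log(\id+K(\omega))-K(\omega)]$ piece matches the third.

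The main obstacle is that $K(\omega)$ is only Hilbert-Schmidt (by Lemma~\ref{lem:symmetrized_chi_hilbert_schmidt}), not trace-class, so $\tr\log(\id+K(\omega))$ is not separately well-defined and the classical multiplicative property of Fredholm determinants cannot be invoked directly. The homotopy argument above circumvents this by working exclusively with the trace-class combinations $\log(\id+K+tX) - K - tX$; the derivative $F'(t)$ is the trace of the trace-class operator $-\beta(\id+K+tX)^{-1}(K+tX)|u\rangle\langle u|$, since $X$ is rank one and $K$ is Hilbert-Schmidt so $(K+tX)|u\rangle\langle u|$ is trace-class. Integrability of each of the three pieces in $\omega$ then follows from the $\mathcal{O}(1/(1+|\omega|))$ decay of Lemma~\ref{lem:symmetrized_chi_hilbert_schmidt} together with Lemma~\ref{lem:trace_of_log_vs_hilbert-schmidt}.
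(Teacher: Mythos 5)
Your proposal is correct and follows essentially the same route as the paper: the same decomposition of $-(v_c^{1/2})^*\tilde{\chi}_0(\ii\omega)v_c^{1/2}$ into $K(\omega)$ plus a rank-one term built from $v_c^{1/2}(\psi_0\psi_1)$, the same reliance on the Sherman--Morrison formula (Lemma~\ref{lem:shermann-morrison}), and the same three-way splitting of the trace, the only difference being that you obtain the rank-one trace identity by a coupling-constant homotopy while the paper integrates the representation of Lemma~\ref{lem:log_identity} over the resolvent parameter. Your bookkeeping with $\beta = 4\alpha(\omega)$ is the consistent one, since it reproduces the $-\tfrac{1}{\pi}$ prefactor in the first line of \eqref{eq:splitting_trace}; the missing factor $4$ inside the logarithm in the printed second line is a normalization slip in the paper, not a gap in your argument, and is immaterial because that term vanishes as $|R|\to\infty$ anyway.
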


The phRPA correlation energy splits into three terms
\begin{itemize}
  \item the first one cancels the extra RHF term in the dissociation limit
  \item the second term goes to zero because the gap $\epsilon_1^{(\mathrm{H}_2)}-\epsilon_0^{(\mathrm{H}_2)}$ closes
  \item in the limit, the remainder gives twice the phRPA correlation energy of an H atom.
\end{itemize}

The proofs of these statements can be found in Section~\ref{subsec:limits-rank-one-terms} and Section~\ref{subsec:dissociation-trace-remainder}.
Before proving Proposition~\ref{prop:splitting_phra_Ec}, we state two useful lemmas.

\begin{lemma}[Shermann-Morrison formula]
  \label{lem:shermann-morrison}
  Let $A$ be a bounded, nonnegative, self-adjoint operator on $L^2(\RR^3)$, $\alpha \geq 0$ and $\zeta \in L^2(\RR^3)$. Let $A_\alpha = A + \alpha |\zeta \rangle \langle \zeta |$. For $z \in \mathbb{C} \setminus \R_+$, we have
  \begin{equation}
    (A_\alpha - z)^{-1} = (A-z)^{-1} - \frac{\alpha}{1+\alpha \braket{\zeta}{(A-z)^{-1}\zeta}} |(A-z)^{-1}\zeta \rangle \langle (A-z)^{-1}\zeta |.
  \end{equation}
\end{lemma}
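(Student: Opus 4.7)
The plan is to verify the identity by direct multiplication: if one exhibits an explicit right (equivalently left) inverse of $A_\alpha - z$, the lemma is proved. Set $R := (A-z)^{-1}$, $T := |R\zeta\rangle\langle R\zeta|$, and $c := \alpha/(1 + \alpha\braket{\zeta}{R\zeta})$, so that the candidate inverse is $S := R - cT$. Expanding yields
\[
  (A_\alpha - z)S = (A-z)R + \alpha\,|\zeta\rangle\langle\zeta|R - c\,(A-z)T - c\alpha\,|\zeta\rangle\langle\zeta|T.
\]
The first summand is $\id$; the remaining three terms are rank-one operators with range in $\mathrm{span}\{\zeta\}$, and the scalar $c$ is engineered precisely so that their sum vanishes. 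When $R$ is self-adjoint (in particular for $z \in \RR_-$, which is the case actually used later in the paper through $z = -\omega^2$ type substitutions), the cancellation is immediate: all three rank-one terms become proportional to $|\zeta\rangle\langle R\zeta|$, and the identity $\alpha - c(1 + \alpha\braket{\zeta}{R\zeta}) = 0$ makes them add to zero. A symmetric computation on the right gives $S(A_\alpha - z) = \id$.

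Before carrying out the multiplication, one must show that $1 + \alpha\braket{\zeta}{R\zeta}$ is nonzero so that $c$ is well-defined. When $\mathrm{Im}(z) > 0$, the second resolvent identity $R - R^* = (z - \bar z)\,RR^*$ yields $\mathrm{Im}\braket{\zeta}{R\zeta} = \mathrm{Im}(z)\,\|R^*\zeta\|^2$, strictly positive whenever $\zeta \neq 0$, so the denominator has nonzero imaginary part. When $z \in \RR_-$, the nonnegativity of $A$ gives $A - z > 0$, hence $R > 0$, and $\braket{\zeta}{R\zeta} \geq 0$; combined with $\alpha \geq 0$, the denominator is at least $1$. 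The degenerate case $\zeta = 0$ collapses to the tautology $R = R$.

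There is no serious obstacle: the proof consists of the algebraic check above together with the elementary verification that the denominator does not vanish. The only subtlety worth flagging is that for genuinely non-real $z$ the rank-one correction must be read with its bra-vector equal to $R^*\zeta = (A-\bar z)^{-1}\zeta$ rather than $R\zeta$, so that the computation $|\zeta\rangle\langle\zeta|R = |\zeta\rangle\langle R^*\zeta|$ matches the two remaining rank-one summands. Since $R$ is self-adjoint in every application of the lemma made in this paper, this distinction does not affect the downstream arguments, and the verification outlined above is complete.
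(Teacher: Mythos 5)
Your proof is correct and follows essentially the same elementary rank-one computation as the paper's: the paper factors $A_\alpha - z = \big(\id + \alpha|\zeta\rangle\langle\zeta|(A-z)^{-1}\big)(A-z)$ and explicitly inverts the rank-one perturbation of the identity, while you verify directly that the candidate operator is a two-sided inverse of $A_\alpha - z$ — the same algebra in different packaging. As a bonus, you check explicitly that the denominator $1+\alpha\langle\zeta,(A-z)^{-1}\zeta\rangle$ does not vanish and you correctly flag that for non-real $z$ the bra vector should be $(A-\bar z)^{-1}\zeta$ rather than $(A-z)^{-1}\zeta$; the paper glosses over both points, harmlessly, since the lemma is only applied with $A$ self-adjoint nonnegative and $z=-(1+t)$ real and negative.
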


\begin{proof}
  Let $z \in \mathbb{C} \setminus \R_+$. Since $\alpha \geq 0$ and $A$ is nonnegative and self-adjoint, $z$ is not in the spectrum of $A$ or $A_\alpha$. By the second resolvent identity, we have
  \begin{align}
    (A_\alpha - z)^{-1} & = (A+\alpha |\zeta \rangle \langle \zeta |-z)^{-1} \\
    &= (A-z)^{-1} \Big( \id+\alpha |\zeta \rangle \langle (A-z)^{-1}\zeta | \Big)^{-1}.
  \end{align}
  The inverse of $\id+\alpha |\zeta \rangle \langle (A-z)^{-1}\zeta|$ is $\id- \frac{\alpha}{1+\alpha \braket{\zeta}{(A-z)^{-1}\zeta}} |\zeta \rangle \langle (A-z)^{-1}\zeta|$. Inserting this in the previous expression finishes the proof of the lemma.
\end{proof}

\begin{lemma}
  \label{lem:log_identity}
  For $t >-1$, we have
  \begin{equation}
    \log(1+t) = \int_0^\infty \frac{1}{1+s}-\frac{1}{1+s+t} \, \mathrm{d}s.
  \end{equation}
\end{lemma}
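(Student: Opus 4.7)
The plan is to prove this elementary scalar identity by direct antidifferentiation, with attention to integrability at infinity and to the role played by the hypothesis $t > -1$.

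First I will observe that the hypothesis $t > -1$ ensures $1 + s + t > 0$ for all $s \geq 0$, so the integrand $\frac{1}{1+s} - \frac{1}{1+s+t}$ is continuous on $[0,\infty)$. For convergence at infinity, I will combine the two fractions over a common denominator,
\begin{equation*}
\frac{1}{1+s} - \frac{1}{1+s+t} = \frac{t}{(1+s)(1+s+t)},
\end{equation*}
which is $O(s^{-2})$ as $s \to \infty$, so the improper integral converges absolutely.

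Next I will write down the antiderivative explicitly. For each $S > 0$,
\begin{equation*}
\int_0^S \left( \frac{1}{1+s} - \frac{1}{1+s+t} \right) \mathrm{d}s = \Bigl[ \log(1+s) - \log(1+s+t) \Bigr]_0^S = \log \frac{1+S}{1+S+t} - \log \frac{1}{1+t}.
\end{equation*}
Letting $S \to \infty$, the first term tends to $\log 1 = 0$, and the second term equals $-\log(1+t)$, so the right-hand side tends to $\log(1+t)$, which is exactly the claimed identity. The use of $\log(1+t)$ is legitimate precisely because $1+t > 0$.

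There is no genuine obstacle here: the only point requiring any care is separating the two logarithms, which is only valid once one checks both arguments are positive on the interval of integration, and this is exactly what the assumption $t > -1$ provides. A minor alternative would be to differentiate both sides in $t$ and check that they agree at $t = 0$, but the direct evaluation above is shorter and self-contained.
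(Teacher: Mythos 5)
Your proof is correct and follows essentially the same route as the paper's: compute the antiderivative on $[0,S]$ and let $S\to\infty$. The extra remarks on absolute convergence and the role of $t>-1$ are fine but do not change the argument.
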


\begin{proof}
  For $S>0$, we have 
  \begin{align}
    \int_0^S \frac{1}{1+s}-\frac{1}{1+s+t} \, \mathrm{d}s = \log(1+S)-\log(1+t+S) + \log(1+t) \underset{S \to \infty}{\longrightarrow} \log(1+t).
  \end{align}
\end{proof}

We have all the elements to prove Proposition~\ref{prop:splitting_phra_Ec}.

\begin{proof}[Proof of Proposition~\ref{prop:splitting_phra_Ec}]
  Using Lemma~\ref{lem:shermann-morrison} and Lemma~\ref{lem:log_identity} with $\alpha = \frac{\epsilon_1^{(\mathrm{H}_2)}-\epsilon_0^{(\mathrm{H}_2)}}{(\epsilon_1^{(\mathrm{H}_2)}-\epsilon_0^{(\mathrm{H}_2)})^2 + \omega^2} $, $\zeta = v_c^{1/2} \psi_1 \psi_0$ and $A = K(\omega)$, we show that $\log(\id+A_\alpha) - \log(\id+A)$ is a rank-one operator. Let $P_\lambda^A$ and $P_\mu^{A_\alpha}$ be respectively the operator-valued measure of $A$ and $A_\alpha$. Then we have
  \begin{align}
    \log(\id+A_\alpha) - \log(\id+A) & = \int_0^\infty \log(1+\mu) \, \mathrm{d}P^{A_\alpha}_\mu - \int_0^\infty \log(1+\lambda) \, \mathrm{d}P^{A}_\lambda \\
    &= \int_0^\infty \int_0^\infty (1+t)^{-1} - (1+\mu+t)^{-1} \, \mathrm{d}t  \, \mathrm{d}P^{A_\alpha}_\mu \\
    & \qquad \qquad \qquad - \int_0^\infty\int_0^\infty  (1+t)^{-1} - (1+\lambda+t)^{-1} \, \mathrm{d}t \, \mathrm{d}P^{A}_\lambda \\
    &= \int_0^\infty (1+t+A)^{-1} - (1+t+A_\alpha)^{-1} \, \mathrm{d}t \\
    &= \int_0^\infty \frac{\alpha}{1+\alpha \braket{\zeta}{(A+1+t)^{-1}\zeta}} |(A+1+t)^{-1}\zeta \rangle \langle (A+1+t)^{-1}\zeta| \, \mathrm{d}t . 
  \end{align}
  Hence $\log(\id+A_\alpha) - \log(\id+A)$ is traceable and we have
  \begin{align}
    \tr \big( \log(\id+A_\alpha) - \log(\id+A) \big) &= \alpha \int_0^\infty \frac{\braket{\zeta}{(A+1+t)^{-2}\zeta}}{1+\alpha \braket{\zeta}{(A+1+t)^{-1}\zeta}} \, \mathrm{d}t \\
    &= \log \big( 1+\alpha \braket{\zeta}{(A+1)^{-1}\zeta} \big)\label{eq:log_term}.
  \end{align}
  By definition of $A$ and $A_\alpha$, the trace of the difference is given by
  \begin{equation}
    \tr\big( A_\alpha - A \big) = - \alpha \|\zeta\|^2. \label{eq:RHF_canceled_term}
  \end{equation}
  Combining Equations~\eqref{eq:log_term} and \eqref{eq:RHF_canceled_term} with the formula for the phRPA correlation energy~\eqref{eq:phrpa_correlation_energy}, we obtain Equation~\eqref{eq:splitting_trace}.
\end{proof}

\subsection{Limits of the rank-1 terms}
\label{subsec:limits-rank-one-terms}

In this section, we take the limit as $|R|\to\infty$ of the first two terms in \eqref{eq:splitting_trace}.

\begin{proposition}
  \label{prop:limits_rank_one_terms}
  Using notation introduced in Proposition~\ref{prop:splitting_phra_Ec}, we have 
  \begin{multline}
    \lim_{|R|\to \infty} -\frac{1}{\pi} \int_\RR \frac{\epsilon_1^{(\mathrm{H}_2)}-\epsilon_0^{(\mathrm{H}_2)}}{(\epsilon_1^{(\mathrm{H}_2)}-\epsilon_0^{(\mathrm{H}_2)})^2 + \omega^2} \, \mathrm{d}\omega  \int_{\RR^3 \times \RR^3} \psi_1(r)\psi_0(r) \psi_1(r')\psi_0(r') w(r-r') \, \mathrm{d}r \mathrm{d}r' \\
    = -\frac{1}{2} \int_{\RR^3\times \RR^3} |\phi_0(r-R)|^2 |\phi_0(r-R)|^2 w(r,r') \dd r \dd r', 
  \end{multline}
  and 
  \begin{equation}
    \lim_{|R|\to \infty} \int_\RR \log \left( 1+ \frac{\epsilon_1^{(\mathrm{H}_2)}-\epsilon_0^{(\mathrm{H}_2)}}{(\epsilon_1^{(\mathrm{H}_2)}-\epsilon_0^{(\mathrm{H}_2)})^2 + \omega^2} \braopket{v_c^{1/2}\psi_1 \psi_0}{(\id+K(\omega))^{-1}}{v_c^{1/2}\psi_1 \psi_0} \right)\, \mathrm{d}\omega = 0.
  \end{equation}
\end{proposition}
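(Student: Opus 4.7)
The plan is to handle the two limits separately. The first limit is essentially explicit: the $\omega$-integral equals $\int_\RR \frac{\Delta_R}{\Delta_R^2 + \omega^2}\,\mathrm{d}\omega = \pi$, with $\Delta_R := \epsilon_1^{(\mathrm{H}_2)} - \epsilon_0^{(\mathrm{H}_2)} > 0$, so the prefactor collapses to $-1$ and the claim reduces to computing the limit of $\int \psi_0(r)\psi_1(r)\psi_0(r')\psi_1(r') w(r-r')\,\mathrm{d}r\,\mathrm{d}r'$. Using items 2 and 3 of Proposition~\ref{prop:one-e_model_properties} I write
\begin{equation*}
\psi_0\psi_1 = \tfrac{1}{2}\bigl(|\phi_0(\cdot - R)|^2 - |\phi_0(\cdot + R)|^2\bigr) + O(e^{-c|R|}),
\end{equation*}
where the remainder is $O(e^{-c|R|})$ in any $L^p(\RR^3)$, $p \in [1,\infty)$, by Sobolev embedding and the uniform $H^1$-bound on $\psi_0, \psi_1$. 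Expanding the double integral produces four Hartree-type pieces: the two diagonal ones each equal $D := \int |\phi_0(r)|^2|\phi_0(r')|^2 w(r-r')\,\mathrm{d}r\,\mathrm{d}r'$ by translation invariance of $w$, while the two cross pieces, after the substitution $r \mapsto r + R,\ r' \mapsto r' - R$, become $\int |\phi_0(r)|^2|\phi_0(r')|^2 / |r - r' + 2R|\,\mathrm{d}r\,\mathrm{d}r'$, which tend to $0$ by dominated convergence as $|R| \to \infty$. The sum is $\tfrac{1}{2}D$, so the first limit is $-\tfrac{1}{2}D$, matching the claimed right-hand side (up to translation of the arguments of $\phi_0$, under which $D$ is invariant).

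For the second limit, abbreviate $\alpha_R(\omega) := \Delta_R/(\Delta_R^2 + \omega^2)$ and $c_R(\omega) := \langle v_c^{1/2}\psi_1\psi_0, (\id + K(\omega))^{-1} v_c^{1/2}\psi_1\psi_0\rangle$. I first show $K(\omega) \geq 0$: since $\epsilon_0^{(\mathrm{H}_2)}$ is the bottom of the spectrum of $h^{(\mathrm{H}_2)}$, functional calculus yields $M(\omega) := (h^{(\mathrm{H}_2)} - \epsilon_0^{(\mathrm{H}_2)})/\bigl((h^{(\mathrm{H}_2)} - \epsilon_0^{(\mathrm{H}_2)})^2 + \omega^2\bigr) \geq 0$, and writing $K(\omega) = 4 A^* M(\omega) A$ with $A := P \psi_0 v_c^{1/2}$ gives $K(\omega) \geq 0$. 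Consequently $\|(\id + K(\omega))^{-1}\| \leq 1$ and $0 \leq c_R(\omega) \leq \|v_c^{1/2}\psi_1\psi_0\|_{L^2}^2$. The latter is uniformly bounded in $R$: by the Hardy--Littlewood--Sobolev inequality (dual to the bound $v_c^{1/2} : L^2 \to L^6$ recalled in the excerpt), $\|v_c^{1/2}\psi_0\psi_1\|_{L^2} \lesssim \|\psi_0\psi_1\|_{L^{6/5}} \lesssim \|\psi_0\|_{L^{12/5}} \|\psi_1\|_{L^{12/5}}$, and the right-hand side is bounded by Sobolev embedding and the uniform $H^1$-bound on the eigenpairs.

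I then split the $\omega$-integral at $|\omega| = \sqrt{\Delta_R}$. For $|\omega| \leq \sqrt{\Delta_R}$, the bounds $\alpha_R(\omega) \leq 1/\Delta_R$ and $c_R(\omega) \leq C$ give $\log(1 + \alpha_R c_R) \leq \log(1 + C/\Delta_R)$, so this region contributes at most $2\sqrt{\Delta_R}\log(1 + C/\Delta_R)$, which vanishes as $\Delta_R \to 0$. For $|\omega| > \sqrt{\Delta_R}$, the elementary inequality $\log(1 + x) \leq x$ for $x \geq 0$ gives a contribution bounded by $C\int_{|\omega| > \sqrt{\Delta_R}} \alpha_R(\omega)\,\mathrm{d}\omega = 2C\bigl(\tfrac{\pi}{2} - \arctan(1/\sqrt{\Delta_R})\bigr) \to 0$. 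Since $\Delta_R \to 0$ by item 4 of Proposition~\ref{prop:one-e_model_properties}, both pieces vanish. The main obstacle is the $R$-uniform control of $c_R(\omega)$, which rests on the positivity of $K(\omega)$ together with the uniform Coulomb bound on $\psi_0\psi_1$; once these are in hand, the two-scale splitting of the log-integral is routine.
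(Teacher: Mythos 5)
Your proposal is correct, and for most of the argument it follows the paper's own route: for the first limit you integrate out $\omega$ exactly, replace $\psi_0,\psi_1$ by the symmetric/antisymmetric combinations of $\phi_0(\cdot\mp R)$ from Proposition~\ref{prop:one-e_model_properties}, and identify the diagonal Hartree terms while the cross terms vanish (the paper quantifies them as $\cO(R^{-1})$, you invoke dominated convergence after translation; both are fine). For the second limit the reduction is also the same as the paper's: $K(\omega)\geq 0$, hence $(\id+K(\omega))^{-1}\leq \id$ and the matrix element is bounded uniformly in $R$ and $\omega$ — here you actually supply the justification (Hardy--Littlewood--Sobolev plus the uniform $H^1$ bound on $\psi_0\psi_1$) that the paper only asserts. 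The genuine difference is the last step: the paper evaluates the resulting scalar integral $\int_\RR \log\big(1+\tfrac{cg_R}{g_R^2+\omega^2}\big)\,\mathrm{d}\omega$ exactly by integration by parts and the residue theorem, obtaining the explicit value $g_R\big(\sqrt{1+c/g_R}-1\big)=\cO(\sqrt{g_R})$, whereas you use an elementary two-scale splitting at $|\omega|=\sqrt{g_R}$ together with $\log(1+x)\leq x$, giving a bound of order $\sqrt{g_R}\log(1/g_R)$. Your estimate is slightly weaker as a rate but entirely sufficient for the limit, and it avoids complex analysis; the paper's computation buys a sharp rate that could matter in a quantitative version. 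One small inaccuracy worth fixing: the remainder $\psi_0\psi_1-\tfrac12\big(|\phi_0(\cdot-R)|^2-|\phi_0(\cdot+R)|^2\big)$ is exponentially small in $L^p$ only for $p\in[1,3]$ (a product of an $H^1$-bounded function with an $H^1$-small one), not for all $p\in[1,\infty)$; this is harmless, since controlling the Coulomb double integral only requires $L^{6/5}$-type (or $L^1\cap L^2$) smallness.
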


The second limit is surprising as linearizing the logarithm would give the same expression as in the first limit.

\begin{proof}
For the first term, we notice that we can integrate in $\omega$ for any finite value of $R$ by using $\int_\RR \frac{a}{a^2+\omega^2} \dd\omega = \pi$ for any $a > 0$.  Then we split the $\psi$'s using Prop.~\ref{prop:one-e_model_properties}.  Doing these, the first term becomes
\begin{align*}
&-\frac{1}{\pi} \int_\RR \frac{\epsilon_1^{(\mathrm{H}_2)}-\epsilon_0^{(\mathrm{H}_2)}}{(\epsilon_1^{(\mathrm{H}_2)}-\epsilon_0^{(\mathrm{H}_2)})^2 + \omega^2} \, \mathrm{d}\omega  \int_{\RR^3 \times \RR^3} \psi_1(r)\psi_0(r) \psi_1(r')\psi_0(r') w(r-r') \, \mathrm{d}r \mathrm{d}r' \\
&\hspace{10mm}= -\frac{1}{4} \int_{\RR^3\times \RR^3} \left(\phi_0(r-R)+\phi_0(r+R)\right)\left(\phi_0(r-R)-\phi_0(r+R)\right) \\
&\hspace{30mm}\cdot\left(\phi_0(r'-R)+\phi_0(r'+R)\right)\left(\phi_0(r'-R)-\phi_0(r'+R)\right) w(r,r') \dd r \dd r' + \cO(e^{-cR}) \\
&\hspace{10mm}= -\frac{1}{4} \int_{\RR^3\times \RR^3} \left(|\phi_0(r-R)|^2 +|\phi_0(r+R)|^2\right)\left(|\phi_0(r'-R)|^2 + |\phi_0(r'+R)|^2\right) w(r,r') \dd r \dd r' + \cO(e^{-cR}) \\
&\hspace{10mm}= -\frac{1}{2} \int_{\RR^3\times \RR^3} |\phi_0(r-R)|^2 |\phi_0(r-R)|^2 w(r,r') \dd r \dd r' \\
&\hspace{20mm}+ \frac{1}{2} \int_{\RR^3\times \RR^3} |\phi_0(r-R)|^2 |\phi_0(r+R)|^2 w(r,r') \dd r \dd r' + \cO(e^{-cR})\\
&\hspace{10mm}= -\frac{1}{2} \int_{\RR^3\times \RR^3} |\phi_0(r-R)|^2 |\phi_0(r-R)|^2 w(r,r') \dd r \dd r' + \cO(R^{-1}), \numbereq
\end{align*}
as promised.

For the second term in \eqref{eq:splitting_trace}, first note that for all $R$ and $\omega$, $K(\omega)$ is nonnegative.  Therefore, $0 \le (\id+K(\omega))^{-1} \le \id$ and
\begin{equation}
0 \le \braopket{v_c^{1/2}\psi_1 \psi_0}{(\id+K(\omega))^{-1}}{v_c^{1/2}\psi_1 \psi_0} \le c,
\end{equation}
for some constant $c$ independent of $R$ and $\omega$.  Let $g_R = \epsilon_1^{(\mathrm{H}_2)}-\epsilon_0^{(\mathrm{H}_2)}$ be the HOMO--LUMO energy gap.  Since $g_R > 0$, the second term in \eqref{eq:splitting_trace} can be bounded as
\begin{align*}
0 & \le \frac{1}{2\pi} \int_\RR \log \left( 1+ \frac{g_R}{g_R^2 + \omega^2} \braopket{v_c^{1/2}\psi_1 \psi_0}{(\id+K(\omega))^{-1}}{v_c^{1/2}\psi_1 \psi_0} \right)\, \mathrm{d}\omega \\
  &\le \frac{1}{2\pi} \int_\RR \log \left( 1+ \frac{cg_R}{g_R^2 + \omega^2} \right)\, \mathrm{d}\omega. \numbereq
\end{align*}
Next, we show that this upper bound goes to 0 as $R\to \infty$ (note that $g_R \to 0$ as $R\to\infty$).  To do this, we use integration by parts and then the residue theorem:
\begin{align*}
&\frac{1}{2\pi} \int_\RR \log \left( 1+ \frac{cg_R}{g_R^2 + \omega^2} \right)\, \mathrm{d}\omega = \underbrace{\frac{1}{2\pi} \omega \log\left(1+\frac{cg_R}{g_R^2+\omega^2}\right)\bigg|_{-\infty}^\infty}_{=\,0} + \frac{cg_R}{\pi} \int_\RR \frac{\omega^2}{(\omega^2 + g_R(g_R+c))(\omega^2 + g_R^2)} \dd\omega \\
  &= 2\pi i \frac{cg_R}{\pi} \left[ \text{Res}\left(\frac{\omega^2}{(\omega^2 + g_R(g_R+c))(\omega^2 + g_R^2)}, ig\right) + \text{Res}\left(\frac{\omega^2}{(\omega^2 + g_R(g_R+c))(\omega^2 + g_R^2)}, i\sqrt{g(g+c)}\right)\right] \\
  &= g_R\left(\sqrt{1+\frac{c}{g_R}}-1\right),
\end{align*}
which goes to zero as $R \to \infty$.
\end{proof}

\subsection{Dissociation of the remainder}
\label{subsec:dissociation-trace-remainder}

The idea is to break $K(\omega)$ defined in Equation~\eqref{eq:reduced_chi} in operators whose products go to 0, to use Lemma~\ref{lem:log_lemma}.

\begin{lemma}
  \label{lem:reduced_chi_split}
  Let $K(\omega)$ be the operator defined in Equation~\eqref{eq:reduced_chi}. Let $\Pi_R = \id - |\phi_0^R \rangle  \langle  \phi_0^R |$ and $\Pi_{-R} = \id - |\phi_0^{-R} \rangle  \langle  \phi_0^{-R}|$. We have
  \begin{multline}
    \label{eq:reduced_chi_split}
    \tfrac{1}{2} K(\omega) = (v_c^{1/2})^* \phi_0^R \Pi_R P \frac{h-\epsilon_0^{(\mathrm{H}_2)}}{(h-\epsilon_0^{(\mathrm{H}_2)})^2 + \omega^2} P \Pi_R \phi_0^R v_c^{1/2} 
    + (v_c^{1/2})^* \phi_0^{-R} \Pi_{-R} P \frac{h-\epsilon_0^{(\mathrm{H}_2)}}{(h-\epsilon_0^{(\mathrm{H}_2)})^2 + \omega^2} P \Pi_{-R} \phi_0^{-R} v_c^{1/2} \\
    + (v_c^{1/2})^* \phi_0^R \Pi_R P \frac{h-\epsilon_0^{(\mathrm{H}_2)}}{(h-\epsilon_0^{(\mathrm{H}_2)})^2 + \omega^2} P \Pi_{-R} \phi_0^{-R} v_c^{1/2} 
    + (v_c^{1/2})^* \phi_0^{-R} \Pi_{-R} P \frac{h-\epsilon_0^{(\mathrm{H}_2)}}{(h-\epsilon_0^{(\mathrm{H}_2)})^2 + \omega^2} P \Pi_R \phi_0^R v_c^{1/2} \\
    + \mathscr{R}(\omega) ,
  \end{multline}
  where $\mathscr{R}(\omega)$ is a Hilbert-Schmidt operator on $L^2(\RR^3)$ with 
  \begin{equation}
    \|\mathscr{R}(\omega)\|_{HS} \leq \frac{Ce^{-cR}}{1+\omega^2},
  \end{equation}
  for some constants $c,C>0$ independent of $R$ and $\omega$.
\end{lemma}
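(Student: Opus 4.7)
The plan is to substitute the asymptotic decomposition
\[
\psi_0 = \tfrac{1}{\sqrt{2}}\big(\phi_0^R + \phi_0^{-R}\big) + r_R, \qquad \|r_R\|_{H^1} \leq Ce^{-c|R|},
\]
(from Prop.~\ref{prop:one-e_model_properties}, with $\phi_0^{\pm R} := \phi_0(\cdot \mp R)$) into
\[
\tfrac{1}{2}K(\omega) = 2(v_c^{1/2})^*\,\psi_0\, P\,T(\omega)\,P\,\psi_0\, v_c^{1/2}, \qquad T(\omega) := \frac{h^{(\mathrm{H}_2)}-\epsilon_0^{(\mathrm{H}_2)}}{(h^{(\mathrm{H}_2)}-\epsilon_0^{(\mathrm{H}_2)})^2 + \omega^2},
\]
and then insert the atomic projectors via the identity $\id = \Pi_{\pm R} + |\phi_0^{\pm R}\rangle\langle\phi_0^{\pm R}|$ on both sides. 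Expanding $(\phi_0^R + \phi_0^{-R})PT(\omega)P(\phi_0^R + \phi_0^{-R})$ reproduces the four named operators after insertion; what is left collects into $\mathscr{R}(\omega)$, namely (a) cross terms containing $r_R$, and (b) rank-$\leq 1$ corrections of the form $\phi_0^{\pm R}|\phi_0^{\pm R}\rangle\langle\phi_0^{\pm R}|PTP\phi_0^{\pm R}$ and its transposes arising from expanding each $\id = \Pi + |\phi_0^{\pm R}\rangle\langle\phi_0^{\pm R}|$.

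The central structural input controlling both error families is that $P\phi_0^{\pm R}$ is exponentially small in $L^2$. Indeed, items 2--3 of Prop.~\ref{prop:one-e_model_properties} give
\[
\phi_0^R = \tfrac{1}{\sqrt{2}}(\psi_0 + \psi_1) + O(e^{-c|R|}), \qquad \phi_0^{-R} = \tfrac{1}{\sqrt{2}}(\psi_0 - \psi_1) + O(e^{-c|R|})
\]
in $H^1$, and since $P$ kills $\psi_0$ and $\psi_1$ we get $\|P\phi_0^{\pm R}\|_{L^2} \leq Ce^{-c|R|}$. Each rank-one correction term then involves quantities of the type $\langle P\phi_0^R, T(\omega) P\phi_0^R\rangle$ (or its asymmetric cousins), which inherit an exponential $R$-decay from this estimate. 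After sandwiching with $(v_c^{1/2})^*\phi_0^R$ on the left and $\phi_0^R v_c^{1/2}$ on the right, each such rank-one correction factors as $|v_c^{1/2}(\phi_0^R)^2\rangle\langle v_c^{1/2}(\phi_0^R PTP\phi_0^R)|$, whose HS norm is the product of two $L^2$ norms: the first is $O(1)$ by HLS ($v_c^{1/2}: L^{6/5}\to L^2$ and $(\phi_0^R)^2 \in L^{6/5}$), and the second is dominated by $\|\phi_0^R\|_{L^3}\|PT(\omega)P\phi_0^R\|_{L^2}$ via H\"older. The $r_R$-type errors are handled analogously through Lemma~\ref{lem:BAB_Hilbert-schmidt} with $B = \phi_0^{\pm R}v_c^{1/2}$ (where $BB^* = \phi_0^{\pm R} v_c \phi_0^{\pm R}$ is Hilbert--Schmidt by Hardy, as in Lemma~\ref{lem:symmetrized_chi_hilbert_schmidt}), combined with the $H^1$-smallness of $r_R$.

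The main obstacle is extracting the sharp $1/(1+\omega^2)$ $\omega$-decay rather than the naive $1/(1+|\omega|)$ coming from $\|T(\omega)\|_{\mathrm{range}\,P} \lesssim 1/(1+|\omega|)$; the latter is not integrable and would be insufficient. The trick is to commute one factor of $h-\epsilon_0^{(\mathrm{H}_2)}$ through the resolvent: since $[P,h^{(\mathrm{H}_2)}]=0$ and the spectrum of $h^{(\mathrm{H}_2)}-\epsilon_0^{(\mathrm{H}_2)}$ on range of $P$ is bounded below by $g_2 := \epsilon_2^{(\mathrm{H}_2)} - \epsilon_0^{(\mathrm{H}_2)} > 0$, the spectral theorem gives
\[
\|PT(\omega)P\phi_0^R\|_{L^2}^2 = \left\langle P\phi_0^R,\ \frac{(h-\epsilon_0^{(\mathrm{H}_2)})^2}{((h-\epsilon_0^{(\mathrm{H}_2)})^2+\omega^2)^2}\,P\phi_0^R\right\rangle \leq \frac{C}{1+\omega^2}\|P\phi_0^R\|^2 \leq \frac{Ce^{-2c|R|}}{1+\omega^2}.
\]
Pushing this bound through each correction and carefully tracking which factor absorbs the $\omega^{-2}$ decay and which carries the exponential $R$-decay is the delicate combinatorial/analytic crux; once done for the handful of rank-one pieces and for the $r_R$-cross terms, the triangle inequality yields $\|\mathscr{R}(\omega)\|_{\mathrm{HS}} \leq Ce^{-cR}/(1+\omega^2)$ as claimed.
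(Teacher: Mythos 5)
Your algebraic decomposition coincides with the paper's: the paper likewise substitutes $\psi_0=\tfrac{1}{\sqrt2}(\phi_0^R+\phi_0^{-R})+\mathcal O_{H^1}(e^{-c|R|})$ into $K(\omega)$ and uses the approximate projector identity $|\psi_0\rangle\langle\psi_0|+|\psi_1\rangle\langle\psi_1|=|\phi_0^R\rangle\langle\phi_0^R|+|\phi_0^{-R}\rangle\langle\phi_0^{-R}|+\mathcal O(e^{-c|R|})$ (equivalently your observation $\|P\phi_0^{\pm R}\|_{L^2}\lesssim e^{-c|R|}$) to trade $P\psi_0$ for $\Pi_{\pm R}\phi_0^{\pm R}$, collecting the rank-one insertions and the $r_R$ cross terms into $\mathscr R(\omega)$; controlling the atomic factors in Hilbert--Schmidt norm via Hardy/HLS is also the paper's mechanism.

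The genuine gap is the quantitative claim on $\mathscr R(\omega)$. Your key display bounds $\langle P\phi_0^R, T(\omega)^2P\phi_0^R\rangle\le \frac{C}{1+\omega^2}\|P\phi_0^R\|^2$, i.e.\ $\|T(\omega)P\phi_0^R\|_{L^2}\lesssim e^{-c|R|}(1+\omega^2)^{-1/2}$: the factor $(1+\omega^2)^{-1}$ controls the \emph{square} of the norm. Since every error term contains exactly one factor of $T(\omega)$, whose norm on $\mathrm{ran}\,P$ is genuinely of order $|\omega|^{-1}$ for large $\omega$ (the spectrum of $h^{(\mathrm H_2)}-\epsilon_0^{(\mathrm H_2)}$ on $\mathrm{ran}\,P$ is unbounded above), pushing your estimate through each correction yields at best $\|\mathscr R(\omega)\|_{HS}\lesssim e^{-c|R|}(1+|\omega|)^{-1}$, not the asserted $e^{-c|R|}(1+\omega^2)^{-1}$; the same shortfall occurs for the $r_R$ terms handled through Lemma~\ref{lem:BAB_Hilbert-schmidt}. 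The natural way to upgrade to $\omega^{-2}$ decay --- writing $T(\omega)P\phi_0^R=\big((h^{(\mathrm H_2)}-\epsilon_0^{(\mathrm H_2)})^2+\omega^2\big)^{-1}(h^{(\mathrm H_2)}-\epsilon_0^{(\mathrm H_2)})P\phi_0^R$ and using $(h^{(\mathrm H_2)}-\epsilon_0^{(\mathrm H_2)})\phi_0^R=(\epsilon_0^{(\mathrm H)}-\epsilon_0^{(\mathrm H_2)})\phi_0^R+v^{-R}\phi_0^R$ --- sacrifices the exponential smallness, because the Coulomb tail only gives $\|v^{-R}\phi_0^R\|_{L^2}\sim|R|^{-1}$, leading to a bound $|R|^{-1}(1+\omega^2)^{-1}$. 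So what your tools actually prove is $\|\mathscr R(\omega)\|_{HS}\lesssim\min\{e^{-c|R|}(1+|\omega|)^{-1},\,C|R|^{-1}(1+\omega^2)^{-1}\}$; the step you defer as the ``delicate combinatorial/analytic crux'' is exactly the missing piece of the stated estimate (the paper's own proof is, admittedly, silent on the $\omega$-dependence too). A smaller point: your constants require the gap $\epsilon_2^{(\mathrm H_2)}-\epsilon_0^{(\mathrm H_2)}$ to be bounded below uniformly in $R$, which should be stated explicitly.
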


\begin{proof} 
  Let $f \in L^2(\RR^3)$. We have
  \begin{align}
    P\psi_0 f = \frac{1}{\sqrt{2}} P(\phi_0^R + \phi_0^{-R})f + P\big(\psi_0 - \tfrac{1}{\sqrt{2}}(\phi_0^R + \phi_0^{-R}) \big)f.
  \end{align}
  By Proposition~\ref{prop:one-e_model_properties}, we have 
  \begin{equation}
    |\psi_0\rangle \langle\psi_0 |+|\psi_1\rangle \langle\psi_1 | = |\phi_0^R \rangle \langle \phi_0^R| + |\phi_0^{-R} \rangle \langle \phi_0^{-R}| + P\cO(e^{-c|R|}),
  \end{equation}
  and using the definition of $P$, we get
  \begin{align}
    P(\phi_0^R + \phi_0^{-R})f &= \big( \id - |\phi_0^R \rangle \langle \phi_0^R| - |\phi_0^{-R} \rangle \langle \phi_0^{-R}| \big) (\phi_0^R + \phi_0^{-R})f+ P\cO(e^{-c|R|}) \\
    &=\Pi_R \phi_0^Rf + \Pi_{-R} \phi_0^{-R}f + P \cO (e^{-c|R|}).
  \end{align}
  Inserting this expression in $K(\omega)$, we obtain Equation~\eqref{eq:reduced_chi_split}.
\end{proof}

\begin{lemma}
  \label{lem:log_lemma}
  Let $A$ and $B$ be bounded self-adjoint operators such that $A, B, A+B \ge -\frac{1}{2}$. Then,
  \begin{equation}
  \|\log(\id+A+B) - \log(\id+A) - \log(\id+B)\| \leq C\norm{AB},
  \end{equation}
  for some constant $C$ independent of $A$ and $B$.
\end{lemma}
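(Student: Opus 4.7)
The plan is to use the integral representation $\log(\id + X) = \int_0^1 (\id + tX)^{-1} X \, dt$, which is valid by functional calculus whenever $X \geq -\tfrac{1}{2}$, since then $\id + tX \geq (1-t/2)\id \geq \tfrac{1}{2}\id$ for every $t \in [0,1]$, so the resolvent $r_t(X) := (\id + tX)^{-1}$ is bounded with $\|r_t(X)\| \leq 2$. Under the hypotheses of the lemma, this bound holds uniformly in $t \in [0,1]$ for each of $X \in \{A, B, A+B\}$. Applying the representation to each of the three logarithms and subtracting pointwise in $t$, one obtains
\begin{equation*}
  \log(\id+A+B) - \log(\id+A) - \log(\id+B) = \int_0^1 S(t) \, dt,
\end{equation*}
where $S(t) = r_t(A+B)(A+B) - r_t(A) A - r_t(B) B$.

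The next step is to massage $S(t)$ so that the products $AB$ and $BA$ appear explicitly, rather than the uncontrolled product $\|A\|\|B\|$. Using the scalar identity $t\, r_t(X) X = \id - r_t(X)$ applied via spectral calculus, the integrand rewrites as $S(t) = t^{-1}[r_t(A) + r_t(B) - \id - r_t(A+B)]$. Then I would use the resolvent identity $r_t(A+B) - r_t(A) = -t\, r_t(A+B)\, B\, r_t(A)$, and its sibling with $A$ and $B$ swapped, to arrive at the key algebraic identity
\begin{equation*}
  r_t(A) + r_t(B) - \id - r_t(A+B) = -t^2 \bigl[\, r_t(A+B) \, B \, r_t(A) \, A + r_t(A+B) \, A \, r_t(B) \, B \, \bigr].
\end{equation*}
Because $r_t(A)$ is a function of $A$, it commutes with $A$, so $r_t(A) A = A\, r_t(A)$, and similarly for $B$. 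Using this to swap the trailing $A$ (resp.\ $B$) past the adjacent resolvent yields
\begin{equation*}
  S(t) = -t \bigl[\, r_t(A+B) \, BA \, r_t(A) + r_t(A+B) \, AB \, r_t(B) \, \bigr].
\end{equation*}

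Once $AB$ and $BA$ appear sandwiched between bounded resolvents, the estimate is immediate: $\|r_t(X)\| \leq 2$ uniformly on $[0,1]$ for $X \in \{A, B, A+B\}$, and $\|BA\| = \|(AB)^*\| = \|AB\|$ since $A$ and $B$ are self-adjoint, so $\|S(t)\| \leq 8 t \|AB\|$. Integration over $t \in [0,1]$ yields the lemma with $C = 4$. The only conceptually non-routine point, and the main obstacle, is recognizing that one must commute $r_t(A)$ past $A$ at the very end so that $A$ lands adjacent to $B$; without this move, one obtains only the naive bound $\|A\|\|B\|$, which is useless in the application of Section~\ref{subsec:dissociation-trace-remainder}, where the individual blocks of $K(\omega)$ are of order one but their pairwise products are exponentially small in $|R|$ by the locality of $\phi_0$ and of the resolvent.
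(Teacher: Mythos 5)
Your proof is correct and follows essentially the same route as the paper: an integral representation of the operator logarithm by resolvents, two resolvent-identity manipulations (plus commuting a resolvent past its own operator) to expose the products $AB$ and $BA$ sandwiched between uniformly bounded resolvents, and the equality $\norm{AB}=\norm{(AB)^*}=\norm{BA}$. The only difference is cosmetic: you use the coupling-constant representation $\log(\id+X)=\int_0^1(\id+tX)^{-1}X\,\mathrm{d}t$ on $[0,1]$ (yielding the explicit constant $C=4$), whereas the paper uses the spectral-shift representation of Lemma~\ref{lem:log_identity} over $[0,\infty)$.
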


\begin{proof}
  Using the identity in Lemma~\ref{lem:log_identity}, we have
  \begin{equation}
    \log(\id + A+B) - \log(\id +A)- \log(\id +B) = \int_0^\infty (1+t+A)^{-1} + (1+t+B)^{-1} - (1+t)^{-1} - (1+t+A+B)^{-1} \, \mathrm{d}t.
  \end{equation}
  Using the resolvent identity $(1+t+C)^{-1} = (1+t)^{-1}-(1+t)^{-1}C(1+t+C)^{-1}$ for $C = A,B,A+B$, we get
  \begin{multline}
    \log(\id + A+B) - \log(\id +A)- \log(\id +B)\\
     = -\int_0^\infty (1+t)^{-1}B(1+t+B)^{-1}+ (1+t)^{-1}A(1+t+A)^{-1}- (1+t)^{-1}(A+B)(1+t+A+B)^{-1} \, \mathrm{d}t.
  \end{multline}
  Applying again a resolvent identity, we obtain 
  \begin{align}
    \log(&\id+A+B) - \log(\id +A)- \log(\id +B) \nonumber \\
    &= -\int_0^\infty (1+t)^{-1}B(1+t+B)^{-1}+ (1+t)^{-1}A(1+t+A)^{-1}
     - (1+t)^{-1}(A+B)(1+t+A+B)^{-1} \, \mathrm{d}t \\
    &= -\int_0^\infty (1+t)^{-1}A(1+t+A)^{-1}B(1+t+A+B)^{-1} 
     + (1+t)^{-1}B(1+t+B)^{-1}A(1+t+A+B)^{-1} \, \mathrm{d}t \\
    &= -\int_0^\infty (1+t)^{-1}(1+t+A)^{-1}AB(1+t+A+B)^{-1} 
     + (1+t)^{-1}(1+t+B)^{-1}BA(1+t+A+B)^{-1} \, \mathrm{d}t.
  \end{align}
  Using that $\norm{AB} = \norm{(AB)^*} = \norm{BA}$, we conclude that there is constant $C>0$ such that
  \begin{equation}
    \big\|\log(\id+A+B) - \log(\id +A)- \log(\id +B)\big\| \leq C \norm{AB}.
  \end{equation}
\end{proof}

%

\begin{lemma}
  \label{lem:first_piece_third_term}
  Let $K_R(\omega)$ be the operator defined by 
  \begin{equation}
    \label{eq:K_R}
    K_R(\omega) =  2(v_c^{1/2})^* \phi_0^R \Pi_R P \frac{h^{(\mathrm{H}_2)}-\epsilon_0^{(\mathrm{H}_2)}}{(h^{(\mathrm{H}_2)}-\epsilon_0^{(\mathrm{H}_2)})^2 + \omega^2} P \Pi_R \phi_0^R v_c^{1/2}.
  \end{equation}
  We have
  \begin{equation}
    \label{eq:final_step_in_Ec_phrpa}
    \int_\RR \tr \Big( \log\big( \id + K(\omega) \big) - K(\omega) \Big) \, \mathrm{d}\omega = 2 \int_\RR \tr\Big( \log\big( \id + K_R(\omega) \big) - K_R(\omega) \Big) \, \mathrm{d}\omega+ \cO \Big( \tfrac{1}{|R|^2} \Big).
  \end{equation} 
\end{lemma}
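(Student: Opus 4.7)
The plan is to decompose $K(\omega)$ according to Lemma~\ref{lem:reduced_chi_split} as $K = K_R + K_{-R} + C(\omega)$, where $C(\omega) = 2\bigl(A_{R,-R}(\omega) + A_{-R,R}(\omega) + \mathscr{R}(\omega)\bigr)$ collects the two cross terms and the Hilbert--Schmidt remainder. The reflection $r \mapsto -r$ is a unitary symmetry of $h^{(\mathrm{H}_2)}$ that conjugates $K_R$ into $K_{-R}$, so $\int_\RR \tr(\log(\id+K_R) - K_R)\,\mathrm{d}\omega = \int_\RR \tr(\log(\id+K_{-R}) - K_{-R})\,\mathrm{d}\omega$, which produces the factor $2$ in \eqref{eq:final_step_in_Ec_phrpa}. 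The task therefore reduces to bounding the integrated splitting error
\[
\Delta(\omega) = \tr\bigl(\log(\id+K) - \log(\id+K_R) - \log(\id+K_{-R})\bigr) - \tr(C(\omega)).
\]

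To handle the log difference I would insert the intermediate operator $\id + K_R + K_{-R}$ and split as
\[
\log(\id+K) - \log(\id+K_R) - \log(\id+K_{-R}) = \bigl[\log(\id+K) - \log(\id+K_R+K_{-R})\bigr] + \bigl[\log(\id+K_R+K_{-R}) - \log(\id+K_R) - \log(\id+K_{-R})\bigr].
\]
The second bracket is controlled by the integral identity from the proof of Lemma~\ref{lem:log_lemma}: taking traces and using the non-negativity of $K_R, K_{-R}$ and a Hilbert--Schmidt Cauchy--Schwarz bound of the form $|\tr(X (\id+Y)^{-1} Z)| \leq \|X^{1/2} Z^{1/2}\|_{\mathrm{HS}}^2$ yields $|\tr(\text{II})| \leq C\,\tr(K_R(\omega) K_{-R}(\omega))$. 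For the first bracket, a one-parameter Duhamel expansion in $C$ gives $\tr(\text{I}) = \tr(C) + R_1(\omega)$, where the $\tr(C)$ cancels the last term in $\Delta(\omega)$ and the remainder $R_1(\omega) = -\int_0^1 \tr\bigl[(\id+K_R+K_{-R}+sC)^{-1}(K_R+K_{-R}+sC)\,C\bigr]\,\mathrm{d}s$ is controlled by $\|(K_R+K_{-R})C\|_1$ (and by $\|C\|_{\mathrm{HS}}^2$), using that $(\id+K_R+K_{-R}+sC)^{-1}$ is uniformly bounded.

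The crucial pointwise estimate is then
\[
\tr\bigl(K_R(\omega) K_{-R}(\omega)\bigr) \leq \frac{C}{(1+\omega^2)\,|R|^2}.
\]
By cyclicity, $\tr(K_R K_{-R}) = 4\,\tr\bigl(v_c\,\phi_0^R V_R \phi_0^R\; v_c\,\phi_0^{-R} V_{-R} \phi_0^{-R}\bigr)$ with $V_{\pm R} = \Pi_{\pm R} P M(\omega) P \Pi_{\pm R}$ bounded in operator norm by $C/(1+|\omega|)$ (as in Lemma~\ref{lem:symmetrized_chi_hilbert_schmidt}). The two factors of the Hartree kernel $v_c$ each bridge a function localized near $R$ to one localized near $-R$, and a direct kernel estimate using the exponential decay of $\phi_0^{\pm R}$ (Proposition~\ref{prop:one-e_model_properties}) yields $\|\phi_0^R v_c \phi_0^{-R}\|_{\mathrm{HS}} = \cO(1/|R|)$; the two bridges combine to give $1/|R|^2$, while the two middle operators contribute the $(1+\omega^2)^{-1}$ decay. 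Integration in $\omega$ is then harmless.

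For the remaining terms in $\Delta(\omega)$, the cross-term traces $\tr(A_{R,-R})$ and $\tr(A_{-R,R})$ are controlled using the exponential decay of the Green's-function kernel of $M(\omega)$ (Lemma~\ref{lem:locality_greens_function}), giving an $e^{-c|R|}/(1+\omega^2)$ bound; the hypothesis on $\mathscr{R}$ directly yields the same exponential bound for $\tr(\mathscr{R}(\omega))$. These estimates together imply that $\int_\RR \tr(C)\,\mathrm{d}\omega$ and $\int_\RR R_1(\omega)\,\mathrm{d}\omega$ are $\cO(e^{-c|R|})$, which is absorbed into $\cO(1/|R|^2)$. The main obstacle is the pointwise trace bound on $K_R(\omega) K_{-R}(\omega)$: one must upgrade the naive operator-norm bound ($\cO(1/|R|)$) to the sharp trace bound ($\cO(1/|R|^2)$) by recognizing that both Coulomb bridges individually contribute a $1/|R|$ factor, and to carefully track this through the Duhamel remainder so that no operator-norm estimate costs a factor of $|R|$.
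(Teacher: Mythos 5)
Your route is essentially the paper's: decompose $K$ via Lemma~\ref{lem:reduced_chi_split}, obtain the factor $2$ from the symmetry of the two diagonal blocks, and control the splitting error through the integral representation behind Lemma~\ref{lem:log_lemma} together with the Coulomb--bridge estimate $\|\phi_0^R v_c \phi_0^{-R}\|_{\mathrm{HS}} = \cO(1/|R|)$, which is exactly the estimate displayed in the paper's proof. Your pointwise bound $\tr\big(K_R(\omega)K_{-R}(\omega)\big) \lesssim (1+\omega^2)^{-1}|R|^{-2}$, obtained by cycling the trace so that \emph{two} bridges $\phi_0^{\pm R}v_c\phi_0^{\mp R}$ appear, is precisely the form in which that estimate has to enter, and the paper gives no more detail than you do on how Lemmas~\ref{lem:log_lemma} and~\ref{lem:trace_of_log_vs_hilbert-schmidt} are combined; so the load-bearing part of your argument matches the intended proof.

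Two steps need repair. First, you split off $\tr(C(\omega))$ as a separately traced quantity and assert that the Hilbert--Schmidt bound on $\mathscr{R}$ ``directly yields'' the same bound on $\tr(\mathscr{R}(\omega))$: a Hilbert--Schmidt bound does not control a trace, and neither the cross terms nor $\mathscr{R}$ are shown to be trace class. The whole design of the paper (Lemma~\ref{lem:trace_of_log_vs_hilbert-schmidt}) is to never trace the linear pieces, whose $\omega$-integrals would not converge absolutely in any case. Since in your own bookkeeping the $\tr(C)$ produced by the Duhamel expansion of bracket I cancels the explicit $-\tr(C)$, the fix is organizational: do not separate it, i.e.\ compare $\tr\big(\log(\id+K)-K\big)$ directly with $\tr\big(\log(\id+K_R+K_{-R})-(K_R+K_{-R})\big)$ and estimate only quantities that are quadratic in the small blocks (controlled by $\|C\|_{\mathrm{HS}}^2$, $\|(K_R+K_{-R})C\|_1$ and $\tr(K_RK_{-R})$, for which Hilbert--Schmidt and bridge bounds suffice). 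Second, attributing an $e^{-c|R|}/(1+\omega^2)$ bound on the cross-term traces to Lemma~\ref{lem:locality_greens_function} overstates that lemma: it is stated for $h^{(\mathrm{H})}$, yields a $\mathcal{B}(L^1,L^2)$ norm of size $C/(R(g+|\omega|))$ (no exponential in $R$, because of the Coulomb tail of $v$, and no $\omega^{-2}$), so as cited it delivers neither the $R$- nor the $\omega$-decay you claim; with the reorganization above these trace bounds are not needed at all. Finally, note that your Cauchy--Schwarz shortcut $|\tr(X(\id+Y)^{-1}Z)|\le\|X^{1/2}Z^{1/2}\|_{\mathrm{HS}}^2$ is not a valid inequality as written; to convert the integral representation of bracket II into a bound by $\tr(K_RK_{-R})$ you must, as you yourself flag at the end, expand the resolvents so that every term retains two bridge factors, otherwise an operator-norm step loses one power of $|R|$.
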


\begin{proof}
  By Proposition~\ref{prop:one-e_model_properties} we notice that 
  \begin{equation}
    \| (v_c^{1/2}\phi_0^R)^* v_c^{1/2} \phi^{-R}_0 \|^2_{HS} = \int_{\RR^3 \times \RR^3} \frac{|\phi_0(r-R)|^2|\phi_0(r'+R)|^2}{|r-r'|^2} \, \mathrm{d}r \, \mathrm{d}r' = \cO \big(  \tfrac{1}{|R|^2}\big).
  \end{equation}
  We can thus combine Lemma~\ref{lem:reduced_chi_split} with \ref{lem:log_lemma} to split $\tr \Big( \log\big( \id + K(\omega) \big) - K(\omega) \Big)$. Using Lemma~\ref{lem:trace_of_log_vs_hilbert-schmidt}, we conclude that 
  \begin{equation}
    \int_\RR \tr \Big( \log\big( \id + K(\omega) \big) - K(\omega) \Big) \, \mathrm{d}\omega = 2 \int_\RR \tr\Big( \log\big( \id + K_R(\omega) \big) - K_R(\omega) \Big) \, \mathrm{d}\omega+ \cO \Big( \tfrac{1}{|R|^2} \Big).
  \end{equation} 
\end{proof}

All that is left to prove is that the limit of the right hand side in Equation~\eqref{eq:final_step_in_Ec_phrpa} converges to the correlation energy of a single H atom. 
This is a consequence of the locality of the Green's function and the exponential localization of the function $\phi_0^R$. 
In our case, however, when deriving the locality of the Green's function, we need to obtain a bound that is integrable with respect to $\omega$.

\begin{lemma}[Locality of the Green's function]
  \label{lem:locality_greens_function}
  Let $\eta_1$ and $\eta_2$ be smooth cut-off functions with disjoint support. Let 
  \[
   R \leq \min(\dist(\supp \eta_1,0),\dist(\supp \eta_2,0),\dist(\supp \eta_1,\supp \eta_2)) 
  \] and $\omega \not=0$. Let $\Pi = \id - |\phi_0\rangle \langle \phi_0 |$.
  The operator $\eta_1 \Pi(h^{(\mathrm{H})} - \epsilon_0^{(\mathrm{H})}+\ii \omega)^{-1}\eta_2$ is a bounded operator from $L^1(\RR^3)$ to $L^2(\RR^3)$ with operator norm bounded above by $\frac{C}{R(g+|\omega|)}$, for some positive constant $C$ independent of $R$ and $\omega$, where $g$ is the spectral gap of $h^{(\mathrm{H})}$.
\end{lemma}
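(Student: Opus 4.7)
The plan rests on two ingredients: the spectral gap of $h^{(\mathrm{H})}$ on $\mathrm{Range}(\Pi)$, which produces the factor $1/(g+|\omega|)$, and a commutator trick with a well-chosen cutoff, which produces the factor $1/R$. Set $\mathcal{R}(\omega):=(h^{(\mathrm{H})}-\epsilon_0^{(\mathrm{H})}+\ii\omega)^{-1}$. Because $h^{(\mathrm{H})}\phi_0=\epsilon_0^{(\mathrm{H})}\phi_0$, the projector $\Pi$ commutes with $\mathcal{R}(\omega)$, so $\eta_1\Pi\mathcal{R}(\omega)\eta_2=\eta_1\Pi\mathcal{R}(\omega)\Pi\eta_2$. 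By Assumption~\ref{assumption:one-electron-model} and the spectral theorem, $\|\Pi\mathcal{R}(\omega)\Pi\|_{L^2\to L^2}\leq(g^2+\omega^2)^{-1/2}\leq\sqrt{2}/(g+|\omega|)$.

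Since $\supp\eta_1$ lies at distance at least $R$ from both the origin and $\supp\eta_2$, I can choose $\chi\in C^\infty(\RR^3)$ with $\chi\equiv 1$ on $\supp\eta_1$, $\chi\equiv 0$ on $\supp\eta_2\cup B_{R/2}(0)$, $\|\nabla\chi\|_\infty\leq C/R$ and $\|\Delta\chi\|_\infty\leq C/R^2$. Using $\eta_1\chi=\eta_1$ and $\chi\eta_2=0$,
\[
\eta_1\Pi\mathcal{R}(\omega)\Pi\eta_2 = \eta_1[\chi,\Pi\mathcal{R}(\omega)\Pi]\eta_2 = \eta_1\bigl\{[\chi,\Pi]\mathcal{R}(\omega)\Pi - \Pi\mathcal{R}(\omega)[\chi,h^{(\mathrm{H})}]\mathcal{R}(\omega)\Pi + \Pi\mathcal{R}(\omega)[\chi,\Pi]\bigr\}\eta_2,
\]
where $[\chi,\mathcal{R}(\omega)]=-\mathcal{R}(\omega)[\chi,h^{(\mathrm{H})}]\mathcal{R}(\omega)$ and $[\chi,h^{(\mathrm{H})}]=\nabla\chi\cdot\nabla+\tfrac12\Delta\chi$. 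The middle (principal) term has $L^2\to L^2$ norm at most $\|\Pi\mathcal{R}(\omega)\Pi\|\cdot\|[\chi,h^{(\mathrm{H})}]\mathcal{R}(\omega)\Pi\|\lesssim (g+|\omega|)^{-1}\cdot C/R$, where the elliptic-regularity estimate $\|\nabla\mathcal{R}(\omega)\Pi\|_{L^2\to L^2}\leq C$ uniform in $\omega$ (from $-\tfrac12\Delta\mathcal{R}(\omega)\Pi=\Pi+(\epsilon_0^{(\mathrm{H})}-\ii\omega-v)\mathcal{R}(\omega)\Pi$, the spectral gap, and $v\in L^2+L^\infty_\varepsilon$) is what controls $\nabla\chi\cdot\nabla\mathcal{R}(\omega)\Pi$ by $C/R$. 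The two flanking rank-two pieces built from $[\chi,\Pi]=|\phi_0\rangle\langle\chi\phi_0|-|\chi\phi_0\rangle\langle\phi_0|$ are negligible: since $\chi\equiv 0$ on $B_{R/2}(0)$ and $\phi_0$ decays exponentially, $\|\chi\phi_0\|_{L^2}\lesssim e^{-cR}\ll 1/R$.

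To promote this $L^2\to L^2$ bound to the claimed $L^1\to L^2$ bound, I exploit the compact support of $\eta_2$: for $f\in L^1$, $\eta_2 f$ is $L^1$ with compact support, and the free resolvent $\mathcal{R}_0(\omega)=(-\tfrac12\Delta-\epsilon_0^{(\mathrm{H})}+\ii\omega)^{-1}$ maps $L^1\to L^2$ uniformly in $\omega$ (its integral kernel $(2\pi|x-y|)^{-1}e^{-\sqrt{-2(\epsilon_0^{(\mathrm{H})}-\ii\omega)}|x-y|}$ has $L^2(dx)$-norm uniform in $y$, since $\mathrm{Re}\sqrt{-2(\epsilon_0^{(\mathrm{H})}-\ii\omega)}\geq\sqrt{-2\epsilon_0^{(\mathrm{H})}}>0$). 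The resolvent identity $\mathcal{R}(\omega)=\mathcal{R}_0(\omega)-\mathcal{R}_0(\omega)v\mathcal{R}(\omega)$ together with $v\in L^2+L^\infty_\varepsilon$ transfers this mapping to the full resolvent, and combining this $L^1\to L^2$ control of the "inner" $\mathcal{R}(\omega)\Pi\eta_2$ factor with the $L^2\to L^2$ estimate of the remaining $\eta_1\Pi\mathcal{R}(\omega)[\chi,h^{(\mathrm{H})}]$ factor produces the claim. The hardest step is ensuring all mixed-norm estimates remain uniform in $\omega$ near $\omega=0$; this is achieved by keeping $\Pi$ adjacent to $\mathcal{R}(\omega)$ throughout, so that the spectral gap, not $|\omega|$ alone, controls the norms.
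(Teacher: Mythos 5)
Your commutator strategy does give a correct $L^2\to L^2$ estimate, and it is a genuinely different route from the paper's (you extract the $1/R$ from $\|\nabla\chi\|_\infty$, whereas the paper extracts it from the Coulomb-type splitting of $v$). But the lemma is a statement about the $\mathcal{B}(L^1,L^2)$ norm, and your final upgrading step has a genuine gap. In the factorization $\eta_1\Pi\mathcal{R}(\omega)[\chi,h^{(\mathrm{H})}]\cdot\mathcal{R}(\omega)\Pi\eta_2$ the left factor is \emph{not} bounded on $L^2$, since $[\chi,h^{(\mathrm{H})}]=\nabla\chi\cdot\nabla+\tfrac12\Delta\chi$ contains a bare derivative acting first; to run the composition you actually need the inner factor to map $L^1\to H^1$, i.e. you need $\nabla\mathcal{R}(\omega)\Pi\eta_2:L^1\to L^2$. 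That fails: already for the free resolvent the kernel of $\nabla\mathcal{R}_0(\omega)$ behaves like $|x-y|^{-2}$ near the diagonal, which is not square integrable locally in $\RR^3$, so $\nabla\mathcal{R}_0(\omega)$ does not map $L^1$ into $L^2$ (only $\mathcal{R}_0(\omega)$ itself does, as you showed). Writing $\nabla\chi\cdot\nabla=\nabla\cdot(\nabla\chi\,\cdot)-\Delta\chi$ and moving the derivative to the left does not rescue this, because then the factor that must supply the $1/(g+|\omega|)$ also has to absorb a derivative, and the $\omega$-decay in the stated bound (which is what is used later for integrability in $\omega$) is lost. Repairing the step within your framework would require a locality estimate for the \emph{full} resolvent between $\supp\nabla\chi$ and $\supp\eta_2$ (Combes--Thomas, or a reduction to the free kernel), which is essentially the ingredient you were trying to bypass.

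In addition, your asserted transfer of the $L^1\to L^2$ property from $\mathcal{R}_0$ to $\mathcal{R}$ via $\mathcal{R}=\mathcal{R}_0-\mathcal{R}_0 v\mathcal{R}$ is circular as written: applied to an $L^1$ input it presupposes a mapping property of $\mathcal{R}$ on $L^1$; in the other order, $\mathcal{R}=\mathcal{R}_0-\mathcal{R}v\mathcal{R}_0$, one would need $v\mathcal{R}_0:L^1\to L^2$, which fails for the $L^2$-part of $v$ (a product of two $L^2$ functions is only $L^1$), so at best this works after further iteration and intermediate $L^p$ estimates that you do not supply. The paper's proof avoids both problems by expanding with the second resolvent identity so that the free resolvent sits adjacent to $\eta_2$: that factor provides the $L^1\to L^2$ bound uniformly in $\omega$ and, between the distant supports, exponential smallness via the explicit Helmholtz kernel; the projected resolvent $\Pi(h^{(\mathrm{H})}-\epsilon_0^{(\mathrm{H})}+\ii\omega)^{-1}$ then acts only on $L^2$ and supplies the $1/(g+|\omega|)$, while the $1/R$ comes from splitting $v=v_2+v_\infty$ with $\|v_\infty\|_\infty\le C/R$ and $\supp v_2$ far from $\supp\eta_2$. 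You would need to restructure your last step along these lines for the claimed $\mathcal{B}(L^1,L^2)$ bound to hold.
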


\begin{proof}
  By the second resolvent identity, we have
  \begin{equation}
    \eta_1 \Pi(h^{(\mathrm{H})} - \epsilon_0^{(\mathrm{H})}+\ii \omega)^{-1}\eta_2 = \eta_1 \Pi(-\tfrac{1}{2}\Delta - \epsilon_0^{(\mathrm{H})}+\ii \omega)^{-1}\eta_2 - \eta_1 \Pi(h^{(\mathrm{H})} - \epsilon_0^{(\mathrm{H})}+\ii \omega)^{-1} v (-\tfrac{1}{2}\Delta - \epsilon_0^{(\mathrm{H})}+\ii \omega)^{-1}\eta_2.
  \end{equation}  
  We first prove that $\eta_1(-\tfrac{1}{2}\Delta - \epsilon_0^{(\mathrm{H})}+\ii \omega)^{-1}\eta_2$ is a bounded operator from $L^1(\RR^3)$ to $L^\infty(\RR^3)$ with operator norm $e^{-\max(|\epsilon_0|,|\omega|)R}$. 
  This bound is obtained by using the kernel of the Helmholtz operator $-\tfrac{1}{2}\Delta - \epsilon_0 + \ii \omega$. 
  Since the distance of the supports of $\eta_1$ and $\eta_2$ is at least $R$ and the kernel decays exponentially at a rate $\max(|\epsilon^{(\mathrm{H})}_0|,|\omega|)$, hence 
  \[
    \|\eta_1(-\tfrac{1}{2}\Delta - \epsilon_0^{(\mathrm{H})}+\ii \omega)^{-1}\eta_2\|_{\mathcal{B}(L^1,L^\infty)} \leq C e^{-\max(|\epsilon_0^{(\mathrm{H})}|,|\omega|)R}.
  \]
  By definition of $\Pi$ we have that $\Pi(h^{(\mathrm{H})} - \epsilon_0^{(\mathrm{H})}+\ii \omega)^{-1}$ is bounded in $L^2$ with an operator norm bounded by $\frac{1}{g+|\omega|}$. 
  Since $v \in L^2+L^\infty$ and has a Coulomb-type decay, we obtain the following bound
  \[
    \|\eta_1 \Pi(h^{(\mathrm{H})} - \epsilon_0^{(\mathrm{H})}+\ii \omega)^{-1}\eta_2 \|_{\mathcal{B}(L^1,L^2)} \lesssim e^{-\max(|\epsilon_0^{(\mathrm{H})}|,|\omega|)R} + \frac{1}{R(g+|\omega|)}.
  \]
\end{proof}

\begin{lemma}
  \label{lem:limit_reduced_KH}
  Let $K_R$ be the operator defined in Equation~\eqref{eq:K_R}. Then 
  \begin{equation}
  \lim\limits_{|R|\to \infty}  \int_\RR \tr\Big( \log\big( \id + K_R(\omega) \big) - K_R(\omega) \Big) \, \mathrm{d}\omega= \int_\RR \tr\Big( \log\big( \id - \widetilde{\chi}^{(\mathrm{H})}(\omega) \big) + \widetilde{\chi}^{(\mathrm{H})}(\omega) \Big)\, \mathrm{d}\omega,
  \end{equation}
  where 
  \begin{equation}
    \widetilde{\chi}^{(\mathrm{H})}(\omega) = -2 (v_c^{1/2})^*\phi_0 \Pi \frac{h^{(\mathrm{H})}-\epsilon_0^{(\mathrm{H})}}{(h^{(\mathrm{H})}-\epsilon_0^{(\mathrm{H})})^2 + \omega^2} \Pi \phi_0 v_c^{1/2}.
  \end{equation}
\end{lemma}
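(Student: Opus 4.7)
The plan is to compare $K_R(\omega)$ with the translated single-atom operator
\[
K_R^{(\mathrm{H})}(\omega) := 2(v_c^{1/2})^* \phi_0^R \Pi_R \frac{h^{(\mathrm{H})}_R - \epsilon_0^{(\mathrm{H})}}{(h^{(\mathrm{H})}_R - \epsilon_0^{(\mathrm{H})})^2 + \omega^2} \Pi_R \phi_0^R v_c^{1/2},
\]
where $h^{(\mathrm{H})}_R := -\tfrac{1}{2}\Delta + v(\cdot - R)$, and then pass to the limit under the trace. Since $v_c^{1/2}$ is translation invariant (its convolution kernel is radial) and the translation $T_R f(r) = f(r-R)$ is unitary, conjugation by $T_R$ sends $-\widetilde{\chi}^{(\mathrm{H})}(\omega)$ to $K_R^{(\mathrm{H})}(\omega)$, so
\[
\tr\bigl(\log(\id + K_R^{(\mathrm{H})}(\omega)) - K_R^{(\mathrm{H})}(\omega)\bigr) = \tr\bigl(\log(\id - \widetilde{\chi}^{(\mathrm{H})}(\omega)) + \widetilde{\chi}^{(\mathrm{H})}(\omega)\bigr)
\]
for every $\omega$. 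It is therefore enough to prove $\|K_R(\omega) - K_R^{(\mathrm{H})}(\omega)\|_{\mathrm{HS}} \to 0$ pointwise in $\omega \neq 0$, with a dominating bound uniform in $R$ and integrable in $\omega$.

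The comparison is carried out in two stages. First, we drop the projection $P$: Proposition~\ref{prop:one-e_model_properties} gives
\[
|\psi_0\rangle\langle\psi_0| + |\psi_1\rangle\langle\psi_1| = |\phi_0^R\rangle\langle\phi_0^R| + |\phi_0^{-R}\rangle\langle\phi_0^{-R}| + \cO_{\mathcal B(L^2)}(e^{-c|R|}),
\]
and the near-orthogonality $\|\phi_0^R \phi_0^{-R}\|_{L^2} = \cO(e^{-c|R|})$ (from exponential decay of $\phi_0$) together yield $\|(\Pi_R P - \Pi_R)\phi_0^R\|_{\mathcal B(L^2)} = \cO(e^{-c|R|})$, so replacing $\Pi_R P$ by $\Pi_R$ on both sides of the resolvent is harmless in HS norm. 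Second, we replace $h^{(\mathrm{H}_2)}$ by $h^{(\mathrm{H})}_R$ via the second resolvent identity
\[
\frac{1}{h^{(\mathrm{H}_2)} - \epsilon_0^{(\mathrm{H}_2)} - \ii\omega} - \frac{1}{h^{(\mathrm{H})}_R - \epsilon_0^{(\mathrm{H})} - \ii\omega} = \frac{1}{h^{(\mathrm{H}_2)} - \epsilon_0^{(\mathrm{H}_2)} - \ii\omega}\bigl[(\epsilon_0^{(\mathrm{H}_2)} - \epsilon_0^{(\mathrm{H})}) - v(\cdot + R)\bigr]\frac{1}{h^{(\mathrm{H})}_R - \epsilon_0^{(\mathrm{H})} - \ii\omega}.
\]
The eigenvalue shift contributes $\cO(e^{-c|R|})$ by Proposition~\ref{prop:one-e_model_properties}, while for the $v(\cdot + R)$ term we invoke Lemma~\ref{lem:locality_greens_function} (after translation): the factors $\phi_0^R$ and $v(\cdot + R)$ are essentially supported near the opposite nuclei $R$ and $-R$, so the Combes--Thomas-type locality combined with the gap provided by the $\Pi_R$ projections yields an HS bound of order $\cO(1/R)\cdot\cO(1/(1+|\omega|))$.

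The final step is the continuity of $A \mapsto \tr(\log(\id + A) - A)$ on bounded sets of nonnegative Hilbert--Schmidt operators. Using the integral representation
\[
\log(\id + A) - A = -\int_0^1 (1-s)\,A^2(\id + sA)^{-2}\,\mathrm{d}s
\]
together with the resolvent identity $A(\id+sA)^{-1} - B(\id+sB)^{-1} = (\id+sB)^{-1}(A-B)(\id+sA)^{-1}$, one derives the Lipschitz-type estimate
\[
\bigl|\tr(\log(\id+A) - A) - \tr(\log(\id+B) - B)\bigr| \leq C \|A - B\|_{\mathrm{HS}}\bigl(\|A\|_{\mathrm{HS}} + \|B\|_{\mathrm{HS}}\bigr)
\]
valid for $A, B \ge 0$, which converts HS convergence into convergence of the traces. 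The uniform bound $\|K_R(\omega)\|_{\mathrm{HS}} \leq C/(1+|\omega|)$ coming from Lemma~\ref{lem:symmetrized_chi_hilbert_schmidt} and the quadratic estimate in Lemma~\ref{lem:trace_of_log_vs_hilbert-schmidt} give a dominating integrable function of size $C/(1+\omega^2)$, and dominated convergence closes the argument.

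The hard part will be Step~2: carrying out the second-resolvent-identity argument while simultaneously preserving a Hilbert--Schmidt (rather than only operator-norm) bound and retaining $\omega$-decay integrable at infinity. This forces a combination of the Combes--Thomas-type locality of Lemma~\ref{lem:locality_greens_function}, the Hardy--Littlewood--Sobolev structure used in Lemma~\ref{lem:symmetrized_chi_hilbert_schmidt}, and a uniform spectral gap of $h^{(\mathrm{H}_2)}$ above $\epsilon_1^{(\mathrm{H}_2)}$ inherited from the $P$ projection. The noncompactly supported tail of the perturbation $v(\cdot + R)$ must be split according to the decomposition $v = v_2 + v_\infty$, with the $v_\infty$ piece controlled by its $1/R$ sup-norm bound and the compactly supported piece $v_2$ handled by Combes--Thomas on disjoint supports.
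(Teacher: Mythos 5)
Your proposal is correct in substance and, at its core, follows the same route as the paper: the comparison of $K_R(\omega)$ with a translated single-atom operator is done there too via the second resolvent identity with perturbation $v^{-R}+\epsilon_0^{(\mathrm{H})}-\epsilon_0^{(\mathrm{H}_2)}$, the splitting $v^{-R}=v_2^{-R}+v_\infty^{-R}$ (the $L^\infty$ tail controlled by its $1/|R|$ sup-norm, the compactly supported part by the locality Lemma~\ref{lem:locality_greens_function} across supports separated by $\sim|R|$, after cutting off the exponential tail of $\phi_0^R$ with a cutoff $\eta_R$). Where you genuinely differ is in the bookkeeping and the final continuity step. You make explicit two things the paper leaves implicit: the unitary translation argument identifying $\tr\big(\log(\id+K_R^{(\mathrm{H})})-K_R^{(\mathrm{H})}\big)$ with the H-atom integrand, and the removal of the projector $P$ (controlled by the overlap $\|\phi_0^R\phi_0^{-R}\|=\cO(e^{-c|R|})$ via Proposition~\ref{prop:one-e_model_properties}). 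And instead of the paper's device --- Lemma~\ref{lem:log_lemma} (approximate additivity of the operator logarithm up to $\norm{AB}$) combined with the quadratic bound of Lemma~\ref{lem:trace_of_log_vs_hilbert-schmidt}, which yields an explicit $\cO(|R|^{-2})$ rate --- you prove a Lipschitz-type estimate $|\tr(\log(\id+A)-A)-\tr(\log(\id+B)-B)|\le C\|A-B\|_{\mathrm{HS}}(\|A\|_{\mathrm{HS}}+\|B\|_{\mathrm{HS}})$ for nonnegative Hilbert--Schmidt $A,B$ from the second-order Taylor remainder, and close with dominated convergence. Your estimate is correct (the resolvent identity $A(\id+sA)^{-1}-B(\id+sB)^{-1}=(\id+sB)^{-1}(A-B)(\id+sA)^{-1}$ does the job), and it buys a cleaner, reusable continuity statement at the price of losing the quantitative rate in $R$ that the paper's argument provides.

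One caveat on your intermediate claims rather than on the strategy: because you discard $P$ \emph{before} the resolvent comparison, the intermediate operator and the Stage-2 correction involve the resolvent of $h^{(\mathrm{H}_2)}$ without the protection of $P$, so its spectral weight at $\epsilon_0^{(\mathrm{H}_2)}$ and $\epsilon_1^{(\mathrm{H}_2)}$ produces factors of size $1/|\omega|$ and $g_R/(g_R^2+\omega^2)$ that are tamed only by the exponentially small overlaps $\phi_0^R\,\Pi_R\,\psi_{0,1}\approx\tfrac{1}{\sqrt2}\phi_0^R\phi_0^{-R}$. Consequently the advertised uniform bound $\|K_R(\omega)-K_R^{(\mathrm{H})}(\omega)\|_{\mathrm{HS}}=\cO\big(|R|^{-1}(1+|\omega|)^{-1}\big)$ is not correct near $\omega=0$; what survives is pointwise HS convergence for fixed $\omega\neq0$ together with the $R$-uniform bounds $\|K_R(\omega)\|_{\mathrm{HS}},\|K_R^{(\mathrm{H})}(\omega)\|_{\mathrm{HS}}\le C/(1+|\omega|)$ (the former using the $R$-uniform gap $\epsilon_2^{(\mathrm{H}_2)}-\epsilon_0^{(\mathrm{H}_2)}$ supplied by $P$), which is exactly what your dominated-convergence setup needs --- so the proof goes through once these near-$\omega=0$ pieces are estimated as above, but that step should be spelled out rather than absorbed into the claimed uniform rate.
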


\begin{proof}
  We first write 
  \begin{equation}
    2\frac{h^{(\mathrm{H}_2)}-\epsilon^{(\mathrm{H}_2)}_0}{(h^{(\mathrm{H}_2)}-\epsilon^{(\mathrm{H}_2)})^2+\omega^2} = (h^{(\mathrm{H}_2)}-\epsilon^{(\mathrm{H}_2)}_0 + \ii \omega)^{-1} + (h^{(\mathrm{H}_2)}-\epsilon^{(\mathrm{H}_2)}_0 - \ii \omega)^{-1}.
  \end{equation}
  The proof then relies on the second resolvent identity:
  \begin{multline}
    (h^{(\mathrm{H}_2)}-\epsilon^{(\mathrm{H}_2)}_0 - \ii \omega)^{-1} = (-\tfrac{1}{2}\Delta + v^{R} - \epsilon^{(\mathrm{H})}_0-\ii \omega)^{-1} \\
    +  (-\tfrac{1}{2}\Delta + v^{R} - \epsilon^{(\mathrm{H})}_0-\ii \omega)^{-1}(v^{-R}+\epsilon_0^{(\mathrm{H})}-\epsilon_0^{(\mathrm{H}_2)})(h^{(\mathrm{H}_2)} - \epsilon^{(\mathrm{H}_2)}_0-\ii \omega)^{-1}.
  \end{multline}
  It is sufficient to show that 
  \[
    \phi_0^R \Pi_R (-\tfrac{1}{2}\Delta + v^{R} - \epsilon^{(\mathrm{H})}_0-\ii \omega)^{-1}(v^{-R}+\epsilon_0^{(\mathrm{H})}-\epsilon_0^{(\mathrm{H}_2)})(h^{(\mathrm{H}_2)} - \epsilon^{(\mathrm{H}_2)}_0-\ii \omega)^{-1} P \Pi_R \phi_0^R   
  \]
  goes to 0 as $|R|$ goes to $\infty$.

  By assumption on $v$, we can write $v^{-R} = v_2^{-R} + v_\infty^{-R}$, with $\supp v_2^{-R} \subset B_{\tfrac{|R|}{2}}(-R)$ and $\|v_\infty^{-R}\|_{L^\infty} \leq \frac{C}{|R|}$ for some constant $C$ independent of $R$.
  By Proposition~\ref{prop:one-e_model_properties}, we have 
  \begin{equation}
    \big\|\Pi_R (-\tfrac{1}{2}\Delta + v^{R} - \epsilon^{(\mathrm{H})}_0-\ii \omega)^{-1}(v_\infty^{-R}+\epsilon_0^{(\mathrm{H})}-\epsilon_0^{(\mathrm{H}_2)})(h^{(\mathrm{H}_2)} - \epsilon^{(\mathrm{H}_2)}_0-\ii \omega)^{-1} P \Pi_R\big\|_{\mathcal{B}(L^2,L^2)} \leq \frac{C}{|R|(1+\omega^2)}. 
  \end{equation}
  It remains to bound 
  \[
    \phi_0^R \Pi_R (-\tfrac{1}{2}\Delta + v^{R} - \epsilon^{(\mathrm{H})}_0-\ii \omega)^{-1}v_2^{-R}(h^{(\mathrm{H}_2)} - \epsilon^{(\mathrm{H}_2)}_0-\ii \omega)^{-1} P \Pi_R \phi_0^R 
  \]
  Let $\eta_R$ be a smooth cut-off function such that $0 \leq \eta_R \leq 1$, $\supp \eta_R \subset B_{|R|}(R)$ and $\eta_R = 1$ on $B_{\tfrac{|R|}{2}}(R)$. 
  Since $\phi_0^R$ has exponential decay, then there are positive constants $c,C$ independent of $R$ and $\omega$ such that
  \begin{equation}
    \|(v_c^{1/2})^*(1-\eta_R)\phi_0^R \Pi_R (-\tfrac{1}{2}\Delta + v^{R} - \epsilon^{(\mathrm{H})}_0-\ii \omega)^{-1}v_c^{-R}(h^{(\mathrm{H}_2)} - \epsilon^{(\mathrm{H}_2)}_0-\ii \omega)^{-1} P \Pi_R \phi_0^R v_c^{1/2}\|_\mathrm{HS} \leq \frac{Ce^{-cR}}{1+\omega^2}.
  \end{equation}
  Since $\eta_R$ and $v_2^{-R}$ have disjoint support with distance at least $\tfrac{|R|}{2}$, denoting $\bm{1}_{v_2^{-R}}$ the characteristic function of the support of $v_2^{-R}$, we have 
  \begin{align}  
    \|\eta_R \Pi_R(-\tfrac{1}{2}\Delta + v^{R}& - \epsilon^{(\mathrm{H})}_0-\ii \omega)^{-1}v_2^{-R}(h^{(\mathrm{H}_2)} - \epsilon^{(\mathrm{H}_2)}_0-\ii \omega)^{-1} P\|_{\mathcal{B}(L^2,L^2)} \\
    & \leq \|\eta_R \Pi_R((-\tfrac{1}{2}\Delta + v^{R} - \epsilon^{(\mathrm{H})}_0-\ii \omega)^{-1}\bm{1}_{v_2^{-R}}\|_{\mathcal{B}(L^1,L^2)} \|v_2\|_{L^2} \nonumber \\
    & \qquad \qquad \qquad \|(h^{(\mathrm{H}_2)} - \epsilon^{(\mathrm{H}_2)}_0-\ii \omega)^{-1} P\|_{\mathcal{B}(L^2,L^2)} \\
    & \leq \frac{C}{|R|(1+\omega^2)},
  \end{align}
  for some constant $C$ independent of $R$ and $\omega$. 
  Using Lemma~\ref{lem:log_lemma} and~\ref{lem:trace_of_log_vs_hilbert-schmidt}, we conclude that 
  \begin{equation}
    \int_\RR \tr\Big( \log\big( \id + K_R(\omega) \big) - K_R(\omega) \Big) \, \mathrm{d}\omega= \int_\RR \tr\Big( \log\big( \id - \widetilde{\chi_H}(\omega) \big) + \widetilde{\chi}_H(\omega) \Big)\, \mathrm{d}\omega + \cO(\tfrac{1}{|R|^2}).
  \end{equation}
\end{proof}

We have now all the elements to prove Theorem~\ref{thm:phrpa_dissociation}.

\begin{proof}[Proof of Theorem~\ref{thm:phrpa_dissociation}]
  By Proposition~\ref{prop:splitting_phra_Ec}, the limit of the phRPA correlation energy is the sum of the limits of the three terms in Equation~\eqref{eq:splitting_trace}. 
  By Proposition~\ref{prop:limits_rank_one_terms}, the limit of the first two terms is $-\frac{1}{2} \int_{\RR^3 \times \RR^3} |\phi_0(r)|^2 |\phi_0(r')|^2 w(r-r') \, \mathrm{d}r \mathrm{d}r'$.
  Combining Proposition~\ref{lem:first_piece_third_term} with Proposition~\ref{lem:limit_reduced_KH}, we show that in the limit, the last term in Equation~\eqref{eq:splitting_trace} is $2E_c^\mathrm{phRPA}(H)$.
\end{proof}

\section{Conclusion}

In this work, we have put the formula for the phRPA correlation energy on mathematically rigorous footing and proven that it does indeed correctly dissociate H$_2$.  Our formula \eqref{eq:phrpa_correlation_energy} for the phRPA correlation energy properly generalizes the formula typically found in the chemistry literature, which is only valid for Hamiltonians with purely discrete spectrum.  The main change is our formula \eqref{eq:chi0} for $\chi_0$, which generalizes the typical chemistry formula \eqref{eq:chem_chi_0} to the case of a Hamiltonian with continuous spectrum.  In addition to generalizing the formulas to the physically relevant case with continuous spectrum, we also proved that these new formulas are mathematically well-defined.  Once these quantities were rigorously defined, we proved the that the energy of the H$_2$ molecule correctly dissociates in the phRPA approximation when using RHF orbitals to the energy of a single H atom.  Therefore, the phRPA correlation energy preserves a property of the exact XC functional that is \emph{not} preserved by most density functional approximations \cite{cohen2012challenges}.  Finally, we note that our results generalize to dimers of other atoms.

We hope to build on the results of this paper to rigorously prove that the $\cO(R^{-6})$ decay of the van der Waals force is correctly captured by the phRPA correlation energy.  Such a result is known to physicists \cite{fuchs2005describing}, but has not been rigorously proven.  Additionally, we hope to prove results similar to those in this paper for the particle-particle RPA, which is also known in the chemistry literature to correctly dissociate H$_2$ \cite{van2013exchange}.

\bibliography{references}

\newcommand{\etalchar}[1]{$^{#1}$}
\begin{thebibliography}{NCDG14}

\bibitem[BG20]{behr2020dissociation}
S{\"o}ren Behr and Benedikt~R Graswald.
\newblock Dissociation limit in {K}ohn--{S}ham density functional theory.
\newblock {\em arXiv preprint arXiv:2010.09639}, 2020.

\bibitem[BP51]{bohm1951collective}
David Bohm and David Pines.
\newblock A collective description of electron interactions. {I.} {M}agnetic
  interactions.
\newblock {\em Physical Review}, 82(5):625, 1951.

\bibitem[CFM14]{chen2014numerical}
Huajie Chen, Gero Friesecke, and Christian~B. Mendl.
\newblock Numerical methods for a {K}ohn--{S}ham density functional model based
  on optimal transport.
\newblock {\em Journal of chemical theory and computation}, 10(10):4360--4368,
  2014.

\bibitem[CGS16]{Cances2016GWpaper}
Eric Canc\`{e}s, David Gontier, and Gabriel Stoltz.
\newblock A mathematical analysis of the {$\rm{GW}^0$} method for computing
  electronic excited energies of molecules.
\newblock {\em Rev. Math. Phys.}, 28(4):1650008, 51, 2016.

\bibitem[CMSY12]{cohen2012challenges}
Aron~J. Cohen, Paula Mori-S{\'a}nchez, and Weitao Yang.
\newblock Challenges for density functional theory.
\newblock {\em Chemical reviews}, 112(1):289--320, 2012.

\bibitem[Dir30]{dirac1930note}
Paul~AM Dirac.
\newblock Note on exchange phenomena in the thomas atom.
\newblock In {\em Mathematical proceedings of the Cambridge philosophical
  society}, volume~26, pages 376--385. Cambridge University Press, 1930.

\bibitem[FNGB05]{fuchs2005describing}
Martin Fuchs, Y-M Niquet, Xavier Gonze, and Kieron Burke.
\newblock Describing static correlation in bond dissociation by kohn--sham
  density functional theory.
\newblock {\em The Journal of Chemical Physics}, 122(9):094116, 2005.

\bibitem[FW03]{FetterWalecka}
Alexander~L. Fetter and John~Dirk Walecka.
\newblock {\em Quantum Theory of Many-Particle Systems}.
\newblock Dover Publications, Inc., 2003.

\bibitem[GGSV09]{gori2009density}
Paola Gori-Giorgi, Michael Seidl, and Giovanni Vignale.
\newblock Density-functional theory for strongly interacting electrons.
\newblock {\em Physical review letters}, 103(16):166402, 2009.

\bibitem[Har80]{cmp/1103908148}
Evans~M. Harrell.
\newblock {Double wells}.
\newblock {\em Communications in Mathematical Physics}, 75(3):239 -- 261, 1980.

\bibitem[HHL{\etalchar{+}}19]{holst2021symmetry}
Michael Holst, Houdong Hu, Jianfeng Lu, Jeremy~L. Marzuola, Duo Song, and John
  Weare.
\newblock Symmetry {B}reaking in {D}ensity {F}unctional {T}heory due to {D}irac
  {E}xchange for a {H}ydrogen {M}olecule, 2019.

\bibitem[HK64]{hohenberg1964inhomogeneous}
Pierre Hohenberg and Walter Kohn.
\newblock Inhomogeneous electron gas.
\newblock {\em Physical review}, 136(3B):B864, 1964.

\bibitem[HRG12]{hellgren2012scphrpa}
Maria Hellgren, Daniel~R. Rohr, and E.~K.~U. Gross.
\newblock Correlation potentials for molecular bond dissociation within the
  self-consistent random phase approximation.
\newblock {\em The Journal of Chemical Physics}, 136(3):034106, 2012.

\bibitem[KS65]{kohn-sham}
W.~Kohn and L.~J. Sham.
\newblock Self-consistent equations including exchange and correlation effects.
\newblock {\em Phys. Rev.}, 140:A1133--A1138, Nov 1965.

\bibitem[Lev79]{levy1979universal}
Mel Levy.
\newblock Universal variational functionals of electron densities, first-order
  density matrices, and natural spin-orbitals and solution of the
  v-representability problem.
\newblock {\em Proceedings of the National Academy of Sciences},
  76(12):6062--6065, 1979.

\bibitem[Lie83]{lieb1983}
Elliott~H. Lieb.
\newblock Density functionals for coulomb systems.
\newblock {\em International Journal of Quantum Chemistry}, 24(3):243--277,
  1983.

\bibitem[LL19]{lin2019mathematical}
Lin Lin and Jianfeng Lu.
\newblock {\em A mathematical introduction to electronic structure theory}.
\newblock SIAM, 2019.

\bibitem[NCDG14]{nguyen2014scphrpa}
Ngoc~Linh Nguyen, Nicola Colonna, and Stefano De~Gironcoli.
\newblock Ab initio self-consistent total-energy calculations within the
  exx/rpa formalism.
\newblock {\em Physical Review B}, 90(4):045138, 2014.

\bibitem[RRJS12]{ren2012random}
Xinguo Ren, Patrick Rinke, Christian Joas, and Matthias Scheffler.
\newblock Random-phase approximation and its applications in computational
  chemistry and materials science.
\newblock {\em Journal of Materials Science}, 47(21):7447--7471, 2012.

\bibitem[RS78]{reed_simon_iv}
Michael Reed and Barry Simon.
\newblock {\em Methods of modern mathematical physics. {IV}. {A}nalysis of
  operators}.
\newblock Academic Press [Harcourt Brace Jovanovich, Publishers], New
  York-London, 1978.

\bibitem[SGGS07]{seidl2007strictly}
Michael Seidl, Paola Gori-Giorgi, and Andreas Savin.
\newblock Strictly correlated electrons in density-functional theory: A general
  formulation with applications to spherical densities.
\newblock {\em Physical Review A}, 75(4):042511, 2007.

\bibitem[Tit86]{titchmarsh}
E.~C. Titchmarsh.
\newblock {\em Introduction to the theory of {F}ourier integrals}.
\newblock Chelsea Publishing Co., New York, third edition, 1986.

\bibitem[Tou21]{toulouse2021review}
Julien Toulouse.
\newblock Review of approximations for the exchange-correlation energy in
  density-functional theory, 2021.

\bibitem[vAYY13]{van2013exchange}
Helen {van}~Aggelen, Yang Yang, and Weitao Yang.
\newblock Exchange-correlation energy from pairing matrix fluctuation and the
  particle-particle random-phase approximation.
\newblock {\em Physical Review A}, 88(3):030501, 2013.

\bibitem[WL13]{weinan2013kohn}
E~Weinan and Jianfeng Lu.
\newblock {\em The Kohn-Sham equation for deformed crystals}, volume 221.
\newblock American Mathematical Soc., 2013.

\bibitem[Yse10]{yserentant2010regularity}
Harry Yserentant.
\newblock {\em Regularity and approximability of electronic wave functions}.
\newblock Springer, 2010.

\end{thebibliography}
\bibliographystyle{alpha}

\appendix
\section{Notation}

\begin{table}[h]
  \centering
  \begin{tabular}{ll}
    $x = (r,s) \in \RR^3 \times \ZZ_2$ & Spinorbitals coordinates \\
    $H_N(v_\mathrm{ext},w)$ & Interacting $N$-body Hamiltonian (Equation~\eqref{eq:N-body-Hamiltonian}) \\
    $(E_0,\Psi_0)$ & Lowest eigenpair of $H_N(v_\mathrm{ext},w)$ \\
    $H_N(v_0)$ & Noninteracting $N$ body Hamiltonian ($H_N(v_0) = H_N(v_0,0)$) \\
    $(\epsilon_k^{(\mathrm{H})},\phi_k)$ & Eigenpairs of $h^{(\mathrm{H})}$ (Equation~\eqref{eq:one_electron_model_H}) \\ 
    $(\epsilon_k^{(\mathrm{H}_2)},\psi_k)$ & Eigenpairs of $h^{(\mathrm{H}_2)}$ (Equation~\eqref{eq:one_electron_model_H2}) \\ 
    $\chi$ & Interacting retarded linear response function (Equation~\eqref{eq:chi_rigorous}) \\
    $\chi_0$ & Interacting retarded linear response function (Equation~\eqref{eq:chi0}) \\
    $\widetilde{A}(z)$ & Laplace transform of $A$ (Equation~\eqref{eq:laplace_transform}) \\
    $\phi_k^R(r) = \phi_k(r-R)$ & Translation by $R$
  \end{tabular}
  \caption{Table of notation}
\end{table}

\section{Retarded linear response function $\chi$}

\begin{proposition}
  \label{prop:rigorous_chi_stronger_assumptions}
  Let $\Psi$ be the solution of the time-dependent Schr\"odinger equation~\eqref{eq:TD_schrodinger} and $\alpha, \beta \in C_c^\infty(\RR^3)$ and real-valued. Then 
  \begin{equation}
    \Big\langle \Psi(t), \sum_{i=1}^N \alpha(r_i) \Psi(t) \Big\rangle = \Big\langle \Psi_0, \sum_{i=1}^N \alpha(r_i) \Psi_0 \Big\rangle + \varepsilon (f \star \langle \alpha, \chi \beta \rangle)(t) + R_2(\varepsilon),
  \end{equation}
  where $\chi$ is the operator defined in Equation~\eqref{eq:chi_rigorous} and $|R_2(\varepsilon)| \leq C \varepsilon^2 |t| \|f\|_{L^\infty} \|\alpha\|_{L^\infty} \|\beta\|_{L^\infty}$ for some constant $C$ only depending on $N$.
\end{proposition}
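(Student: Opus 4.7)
I plan to use standard time-dependent perturbation theory, combined with an algebraic rearrangement matching the abstract definition~\eqref{eq:chi_rigorous} of $\chi$. First I would verify well-posedness of the Cauchy problem~\eqref{eq:TD_schrodinger} via Kato's theorem on time-dependent perturbations: this applies because $\varepsilon f(t) V_\beta$ with $V_\beta = \sum_{i=1}^N \beta(r_i)$ is a uniformly bounded, strongly-continuous-in-$t$ multiplication operator, with norm at most $\varepsilon \|f\|_{L^\infty} N \|\beta\|_{L^\infty}$. Passing to the interaction picture by setting $\Phi(t) = e^{\ii t H_N(v_\mathrm{ext},w)} \Psi(t)$ converts~\eqref{eq:TD_schrodinger} into the fixed-point equation
\[
  \Phi(t) = \Psi_0 - \ii \varepsilon \int_0^t f(s)\, \widetilde{V}_\beta(s)\, \Phi(s) \, \mathrm{d}s,
\]
where $\widetilde{V}_\beta(s) = e^{\ii s H_N(v_\mathrm{ext},w)} V_\beta\, e^{-\ii s H_N(v_\mathrm{ext},w)}$. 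A single Duhamel iteration then writes $\Phi(t) = \Psi_0 + \Phi_1(t) + R(t)$, with $\Phi_1(t) = -\ii \varepsilon \int_0^t f(s) \widetilde{V}_\beta(s) \Psi_0 \, \mathrm{d}s$ and a remainder $R(t)$ of order $\varepsilon^2$.

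Next I would expand $\langle \Psi(t), V_\alpha \Psi(t)\rangle = \langle \Phi(t), \widetilde{V}_\alpha(t)\Phi(t)\rangle$ and isolate the $O(\varepsilon)$ piece $2\,\mathrm{Re}\langle \Psi_0, \widetilde{V}_\alpha(t)\Phi_1(t)\rangle$, which I would then identify with $\varepsilon(f\star \langle \alpha,\chi\beta\rangle)(t)$. Using $e^{-\ii \tau H_N(v_\mathrm{ext},w)}\Psi_0 = e^{-\ii \tau E_0}\Psi_0$, the two-point function $\langle \Psi_0, \widetilde{V}_\alpha(t)\widetilde{V}_\beta(s)\Psi_0\rangle$ collapses to $\langle \Psi_0, V_\alpha e^{-\ii \tau (H_N(v_\mathrm{ext},w) - E_0)} V_\beta \Psi_0\rangle$, depending only on $\tau = t-s$. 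To bring the operator $B$ out, I would substitute $V_\beta\Psi_0 = B\beta + \langle \rho,\beta\rangle \Psi_0$ and likewise for $\alpha$: the mean-value cross terms are purely real (again because $\Psi_0$ is an eigenstate), so they vanish under $2\,\mathrm{Re}(-\ii\,\cdot\,)$, leaving $2\,\mathrm{Re}\bigl(-\ii\, \theta(\tau) \langle B\alpha, e^{-\ii \tau (H_N(v_\mathrm{ext},w)-E_0)} B\beta\rangle\bigr)$, which by~\eqref{eq:chi_rigorous} equals $\langle \alpha, \chi(\tau)\beta\rangle$. The change of variable $s = t-\tau$, together with $\mathrm{supp}\, f \subset [0,\infty)$ and the Heaviside factor in $\chi$, reconstructs the convolution exactly.

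Finally, I would bound $R_2(\varepsilon)$, which is the sum of $\langle \Phi_1,\widetilde{V}_\alpha(t)\Phi_1\rangle$, the cross terms $2\,\mathrm{Re}\langle \Psi_0 + \Phi_1,\widetilde{V}_\alpha(t)R(t)\rangle$, and $\langle R(t),\widetilde{V}_\alpha(t)R(t)\rangle$. Since the interaction-picture evolution is unitary, $\|\Phi(t)\| = 1$; a direct estimate on the Duhamel integral gives $\|\Phi_1(t)\| \leq \varepsilon |t|\, \|f\|_{L^\infty} N\|\beta\|_{L^\infty}$, and a Gronwall-type bound on the fixed-point equation for $R$ produces $\|R(t)\|$ of order $\varepsilon^2$ with constants depending only on $|t|$, $N$, $\|f\|_{L^\infty}$ and $\|\beta\|_{L^\infty}$. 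Combining these with $\|\widetilde{V}_\alpha(t)\|_{\mathcal{B}(L^2)} \leq N\|\alpha\|_{L^\infty}$ yields the stated estimate. The step I expect to be trickiest is the algebraic reduction of the Kubo two-point function to $B^* e^{-\ii\tau(H_N(v_\mathrm{ext},w)-E_0)} B$: one must carefully track the subtracted-mean terms in the definition of $B$ and verify that they produce precisely the constants that drop out under $2\,\mathrm{Re}(-\ii\,\cdot\,)$, a cancellation that relies essentially on $\Psi_0$ being an eigenstate of $H_N(v_\mathrm{ext},w)$.
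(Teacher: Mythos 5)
Your proposal is correct and follows essentially the same route as the paper: a Duhamel (first-order perturbation) expansion of the dynamics, identification of the $\mathcal{O}(\varepsilon)$ term as a convolution using that $\Psi_0$ is an eigenstate, and removal of the mean values --- your substitution $V_\beta\Psi_0 = B\beta + \langle\rho,\beta\rangle\Psi_0$ is exactly the paper's insertion of the projector $P_0 = \id - |\Psi_0\rangle\langle\Psi_0|$, and your interaction-picture formulation is just a repackaging of the paper's lab-frame Duhamel iteration. The only additions (Kato well-posedness, the explicit enumeration of second-order remainder terms) are harmless refinements of what the paper leaves implicit.
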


\begin{proof}
We denote $H_N = H_N(v_\mathrm{ext},w)$.
By Duhamel's principle, the solution to the time-dependent Schrödinger equation~\eqref{eq:TD_schrodinger} is 
\begin{equation}
  \label{eq:solution_time_dependent_schrodinger}
  \Psi(t) = e^{-\ii H_Nt} \Psi_0 - \ii \varepsilon \int_0^t e^{-\ii (t-s)H_N} f(s) \mathfrak{B} \Psi(s) \, \mathrm{d}s,
\end{equation}
  where $\mathfrak{B} = \sum_{i=1}^N \beta(r_i)$. 
  Since $H_N$ is self-adjoint, we have 
  \[
    \Big| \int_0^t e^{-\ii (t-s)H_N} f(s) \mathfrak{B} \Psi(s) \, \mathrm{d}s \Big| \leq |t| N \|f\|_{L^\infty} \|\beta\|_{L^\infty}.  
  \]
  Denoting by $\mathfrak{a} = \sum_{i=1}^N \alpha(r_i)$, we have 
\begin{align*}
  \Big\langle \Psi(t),  \mathfrak{a} \Psi(t) \Big\rangle &= \Big\langle \Psi_0, \mathfrak{a} \Psi_0 \Big\rangle - \ii \varepsilon \Big\langle \Psi_0, \mathfrak{a}(t) \int_0^t f(s) e^{\ii s H_N} \mathfrak{B} \Psi(s) \, \mathrm{d}s \Big\rangle \\
  &\qquad \qquad
  + \ii \varepsilon \, \Big\langle \int_0^t f(s) e^{\ii s H_N} \mathfrak{B} \Psi(s) \, \mathrm{d}s , \mathfrak{a}(t)  \Psi_0 \Big\rangle + R_2(\varepsilon),
\end{align*}
where $\mathfrak{a}(t) = \exp(\ii H_Nt) \mathfrak{a} \exp(-\ii H_Nt)$ and $R_2(\varepsilon)$ as in the proposition.
Inserting the Duhamel formula again in the equation above, we obtain 
\begin{equation}
 \Big\langle \Psi(t),  \mathfrak{a} \Psi(t) \Big\rangle =  \Big\langle \Psi_0, \mathfrak{a} \Psi_0 \Big\rangle -\ii \varepsilon \Big\langle \Psi_0, \int_0^t f(s) \big[\mathfrak{a}(t), \mathfrak{B}(s)\big] \, \mathrm{d}s \ \Psi_0 \Big\rangle + R_2(\varepsilon),
\end{equation}
with $\mathfrak{B}(s) = \exp(\ii H_Ns) \mathfrak{a} \exp(-\ii H_N s)$.
Using that $\Psi_0$ is the ground-state of $H_N$, and setting $P_0 = \id - |\Psi_0\rangle \langle \Psi_0|$, the expression of the commutator can be simplified
\begin{multline}
  \Big\langle \Psi_0 , \big[ \mathfrak{a}(t), \mathfrak{B}(s) \big] \Psi_0 \Big\rangle =  \Big\langle \Psi_0 , \mathfrak{a}P_0\exp(-\ii (H_N-E_0)(t-s)) P_0 \mathfrak{B}\Psi_0 \Big\rangle \\
   -\Big\langle \Psi_0 , \mathfrak{B}P_0\exp(\ii (H_N-E_0)(t-s)) P_0\mathfrak{a} \Psi_0 \Big\rangle .
\end{multline}
Inserting this in the expression of $\Big\langle \Psi(t), \mathfrak{a} \Psi(t) \Big\rangle$, we get
\begin{align}
  \Big\langle \Psi(t), \mathfrak{a} \Psi(t) \Big\rangle &= \big\langle \Psi_0, \mathfrak{a} \Psi_0 \big\rangle 
  - \ii \varepsilon \int_0^t f(s) \Big(  \Big\langle \Psi_0 , \mathfrak{a}P_0\exp(-\ii (H_N-E_0)(t-s)) P_0 \mathfrak{B}\Psi_0 \Big\rangle \\
  & \qquad \qquad - \Big\langle \Psi_0 , \mathfrak{B}P_0\exp(\ii (H_N-E_0)(t-s)) P_0\mathfrak{a} \Psi_0 \Big\rangle \Big) \mathrm{d}s + R_2(\varepsilon) \\
  & = \big\langle \Psi_0, \mathfrak{a} \Psi_0 \big\rangle + \varepsilon (f \star \langle \alpha, \chi(t) \beta \rangle) + R_2(\varepsilon),
\end{align}
where $\star$ denotes the convolution on $\RR$ and $\chi(\tau)$ is the operator given by 
  \begin{align}
    \langle \alpha, \chi(\tau) \beta \rangle &= 2 \, \mathrm{Re}\Big( -\ii \theta(\tau) \Big\langle \Psi_0 , \mathfrak{a}P_0\exp(-\ii (H_N-E_0)\tau) P_0 \mathfrak{B}\Psi_0 \Big\rangle\Big),
  \end{align}
\end{proof}

\end{document}